\definecolor{darkgreen}{rgb}{0,0.5,0}
\lstdefinelanguage{Magma}
{
keywords={for,end,if,then,else,elif,while,function,return,cat,&,and,or,do,eq,ne,ge,le,gt,lt,mod, meet,in,notin  },
morekeywords={Seqset,Setseq,Polytope,AutomorphismGroup,RowSequence,IdentifyGroup,
	      Subgroups,PermutationMatrix,Generators,MatrixGroup,Transpose},
sensitive=false,
morecomment=[l]{//},
morecomment=[s]{/*}{*/},
morestring=[b]",
}
\newcommand\sI{\ensuremath{{\mathcal I}}}
\newcommand\sL{\ensuremath{{\mathcal L}}}
\newcommand\sO{\ensuremath{{\mathcal O}}}
\newcommand\sP{\ensuremath{{\mathcal P}}}
\newcommand{\CC}{\ensuremath{\mathbb{C}}}
\newcommand{\NN}{\ensuremath{\mathbb{N}}}
\newcommand{\PP}{\ensuremath{\mathbb{P}}}
\newcommand{\QQ}{\ensuremath{\mathbb{Q}}}
\newcommand{\ZZ}{\ensuremath{\mathbb{Z}}}
\DeclareMathOperator{\Aut}{Aut}
\DeclareMathOperator{\Fix}{Fix}
\DeclareMathOperator{\Exc}{Exc}
\DeclareMathOperator{\Sing}{Sing}
\DeclareMathOperator{\codim}{codim}
\DeclareMathOperator{\cone}{cone}
\DeclareMathOperator{\diag}{diag}
\DeclareMathOperator{\kod}{kod}
\DeclareMathOperator{\vol}{vol}
\newcommand{\Z}{\mathbb{Z}}
\DeclareMathOperator{\e}{e}
\newcommand{\OO}{\mathcal{O}}
\newcommand\dual{\mathrel{\raise3pt\hbox{$\underline{\mathrm{\thinspace d
\thinspace}}$}}}
\newcommand\qe{\ifhmode\unskip\nobreak\fi\quad $\Box$}       % box for QED
\def\BOX{\hfill\lower.5\baselineskip\hbox{$\Box$}}
\newtheorem{theorem}{Theorem}[section]
\newtheorem{lemma}[theorem]{Lemma}
\newtheorem{proposition}[theorem]{Proposition}
\newtheorem{prop}[theorem]{Proposition}
\newtheorem*{theorem*}{Theorem}
\newtheorem*{problem*}{Problem}
\newtheorem*{question*}{Question}
\theoremstyle{remark}
\newtheorem{remark}[theorem]{Remark}
\theoremstyle{definition}
\newtheorem{definition}[theorem]{Definition}
\newtheorem*{notation*}{Notation}
\numberwithin{equation}{section}
\newcounter{nootje}
\renewcommand\check[1]
\begin{document}

\title[Pluricanonical systems]{The pluricanonical systems of a product-quotient variety}

\author{Filippo F. Favale}
\address{Dipartimento di Matematica e Applicazioni,
	Universit\`a degli Studi di Milano-Bicocca,
	Via Roberto Cozzi, 55,
	I-20125 Milano, Italy}
\email{filippo.favale@unimib.it}

\author{Christian Gleissner}
\address{Lehrstuhl Mathematik VIII, Universit\"at Bayreuth, 
Universit\"atsstra\ss e 30, D-95447 Bayreuth, Germany.}
\email{Christian.Gleissner@uni-bayreuth.de}

\author{Roberto Pignatelli}
\address{Dipartimento di Matematica,
	Universit\`a di Trento,
	via Sommarive 14,
	I-38123 Trento, Italy.}
\email{Roberto.Pignatelli@unitn.it}

\date{\today}
\thanks{
\textit{2010 Mathematics Subject Classification}:  Primary: 14L30; Secodary: 14J50, 14J29, 14J32\\
\textit{Keywords}: Product-quotient manifolds, finite group actions, invariants. \\
C. Gleissner wants to thank I. Bauer for several useful discussions on this subject. \\
R. Pignatelli is indebted to G. Occhetta for his help with the proof of Proposition \ref{plurigenera==>CY}.\\
All of the authors want to thank S. Coughlan  for helpful comments.
This research started at the Department of Mathematics of the University of Trento in 2017 from a question of C. Fontanari, that we thank heartily, when the first two authors were Post-Docs there and were supported by FIRB 2012 "Moduli spaces and Applications".\\
R. Pignatelli is grateful to F. Catanese for inviting him to Bayreuth with the ERC-2013-Advanced Grant-340258-TADMICAMT; part of this research took place during his visit. He is partially supported by the project PRIN 2015 Geometria delle variet\`a algebriche.\\
F.F. Favale and R. Pignatelli are members of GNSAGA-INdAM.\\
}

 \makeatletter
    \def\@pnumwidth{2em}
  \makeatother

\begin{abstract}
We give a method for the computation of the plurigenera of a product-quotient manifold, and two different types of applications of it: to the construction of Calabi-Yau threefolds and to the determination of the minimal model of a product-quotient surface of general type.
\end{abstract}

\maketitle

\addtocontents{toc}{\protect\setcounter{tocdepth}{1}}

\tableofcontents
%=========================================================================

%%%%%%%%%%%%%%%%%%%%%%%%%%%%%%%%%%%%%%%%%%%%%%%%%%%%%%%%%%%%%%%%%%%%%%%
\section*{Introduction}

Product-quotient varieties are varieties obtained by taking a minimal resolution 
of the singularities of a quotient $X:=\left( \prod_1^n C_i \right)  /G$, the {\it quotient model}, where $G$ is a finite group acting diagonally, {\it i.e.} as $g(x_1,\ldots,x_n)=(gx_1,\ldots,gx_n)$. Usually the genera of the curves $C_i$ are assumed to be at least $2$: for the sake of simplicity, we will assume this implicitly from now on.

The notion of product-quotient variety has been introduced in \cite{BaPi12} in the first nontrivial case $n=2$, as a generalization of the {\it  varieties isogenous to a product of unmixed type}, where the action of the group is assumed to be free. 

Product-quotient varieties have proved in the last decade to form a very interesting class, because even if they are relatively easy to construct,  there are several objects with interesting properties among them. Indeed they have been a fruitful source of examples with applications in different areas of algebraic geometry.

For example \cite{AliceMatteo} constructs in this way several $K3$ surfaces with automorphisms of prime order that are not symplectic. A completely different application is the construction of rigid not infinitesimally rigid compact complex manifolds obtained in \cite{notinfinitesimally}, answering a question about 50 years old.

A classical problem is the analysis of the possible behaviors of the canonical map of a surface of general type. \cite{Beauville} provides upper bounds for both the degree of the map and the degree of its image, but very few examples realizing values near those bounds are in literature. The current best values have been recently attained respectively in \cite{ChristianRobertoCarlos} and \cite{Cat18} with this technique.

%For what concerns the classification of the surfaces of general type \cite{MichaelMatteo} study the asymptotic behavior  of the number of connected components of the Gieseker moduli space of the surfaces of general type with fixed deformation invariant $K^2$. It has been proved in \cite{Cat92} that this number is bounded above by $(K^2)^{77K^2}$; \cite{MichaelMatteo} constructs examples that show that this bound cannot be improved much in the sense that the number of components of the examples in  \cite{MichaelMatteo} grows asymptotically as $C (K^2)^{\sqrt{K^2}}$ ($C$ constant).

Last but not least, product-quotient surfaces  have been used to construct several new examples of surfaces of general type $S$ with $\chi({\mathcal O}_S)=1$, the minimal possible value, see \cite{mysurvey} and the references therein.
Restricting for the sake of simplicity to the regular case, the minimal surfaces of general type with geometric genus $p_g=0$, whose classification is a long standing problem known as {\it Mumford's  dream}, we now have dozens of families of them constructed as product-quotient surfaces, see \cites{fano,isogenous,BaCaGrPi12,BaPi12,BaPi16}, a huge number when compared with the examples constructed by other techniques, see \cite{surveypg0}. In higher dimensions, a complete classification of threefolds isogenous to a product with $\chi({\mathcal O}_X)=-1$, the maximal possible value, has been achieved recently see \cite{FG16, Christian}. 

It is very likely that  the list of product-quotient surfaces of general type with $p_g=0$ in \cite{BaPi16} is complete, but we are not able to prove it. The main obstruction to get a full classification is that it is very difficult to determine the minimal model of a regular product-quotient variety. Indeed the list was produced by a computer program able to classify all regular product-quotient surfaces $S$ with $p_g=0$ and a given value of $K^2$. The surfaces of general type $S$ with $p_g=0$ have, by standard inequalities, $1 \leq K^2_S \leq 9$ when minimal, but a minimal resolution of the singularities of a product-quotient surface may be not minimal and then have $K_S^2\leq 0$.  Detecting the rational curves with self-intersection $-1$ in one of these surfaces may be very difficult, see for example the {\it fake Godeaux surface} in \cite{BaPi12}*{Section 5}.

More generally, in birational geometry one would like to know, given an algebraic variety, one of the``simplest'' variety in its birational class, a ``minimal'' one. This is the famous Minimal Model Program, producing a variety with nef canonical system and at worse terminal singularities, or a Mori fiber space. At the moment we are not able to run a minimal model program explicitly for a general product-quotient variety even in dimension $2$. Anyhow, knowing all plurigenera $h^0(dK)$ of an algebraic variety gives a lot of information on its minimal models. 

Actually the main result of this paper is a method for computing all plurigenera of a product-quotient variety. We first prove the following
\begin{theorem*}
Let $Y$ be a smooth quasi-projective variety, let $G$ be a finite subgroup of $\Aut(Y)$ and let
$\psi \colon \widehat{X} \to Y/G=:X$ be a resolution of the singularities.
Then there exists a normal variety 
$\widetilde{Y}$, a proper birational morphism  $\phi \colon \widetilde{Y} \to Y$ and a finite surjective morphism 
$\epsilon \colon \widetilde{Y} \to \widehat{X}$ such  that the following diagram commutes:
\[
\begin{xy}
  \xymatrix{
     \widetilde{Y}\ar[rr]^{\epsilon}\ar[d]_{\phi}  &  & \widehat{X} \ar[d]^{\psi} \\   
		 Y \ar[rr]_{\pi}  &   & X
		}
\end{xy}
\]

Setting $R:=K_Y-\pi^*K_X$ and $E:=K_{\hat{X}}-\psi^*K_X$ there is a natural isomorphism  
\[
H^0\big(\widehat{X}, \sO_{\widehat{X}}(d K_{\widehat{X}})\big) \simeq 
H^0\left(Y,\sO_{Y}(dK_Y) \otimes \mathcal{I}_d \right)^G
\]
for all $d \geq 1$, where 
$\sI_d$ is the sheaf of ideals $\sO_Y(- dR) \otimes \phi_{\ast}\sO_{\widetilde{Y}} (\epsilon^{\ast} dE)$. 
\end{theorem*}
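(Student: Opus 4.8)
The plan is to realise $\widetilde{Y}$ as the normalisation of a suitable component of the fibre product $Y\times_X\widehat{X}$, to check by hand that the two projections have the stated properties, and then to deduce the isomorphism of pluricanonical spaces from two uses of the projection formula together with the ramification formula.

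Assume $Y$ irreducible (otherwise work componentwise) and let $U\subseteq X$ be the dense open set over which $\psi$ is an isomorphism. Over $U$ the fibre product $Y\times_X\widehat{X}$ is canonically $\pi^{-1}(U)$, which is dense and irreducible; hence there is a unique component $W=\overline{\pi^{-1}(U)}$ of $Y\times_X\widehat{X}$ dominating $Y$. Let $\widetilde{Y}\to W$ be its normalisation, and let $\phi,\epsilon$ be the compositions of the normalisation with, respectively, the first and the second projection. Then $\epsilon$ is finite (a base change of the finite $\pi$, followed by a closed immersion and a finite morphism) and surjective (its image is closed and contains the dense set $\psi^{-1}(U)$), while $\phi$ is proper (a base change of the proper $\psi$, $\dots$) and birational (the identity over $\pi^{-1}(U)$), and $\psi\circ\epsilon=\pi\circ\phi$ by construction. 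The action $g\cdot(y,w)=(gy,w)$ of $G$ on $Y\times_X\widehat{X}$ makes sense because $\pi$ is $G$-invariant; it fixes $U$, hence preserves $W$ and lifts to $\widetilde{Y}$, making $\phi$ $G$-equivariant and $\epsilon$ $G$-invariant. Finally $\epsilon$ factors through $\widetilde{Y}/G\to\widehat{X}$, a finite birational morphism (the generic fibre of $\epsilon$ is the $G$-torsor $\pi^{-1}(U)\to U$) onto the normal variety $\widehat{X}$, so it is an isomorphism: $\widehat{X}=\widetilde{Y}/G$, and in particular $\bigl(\epsilon_\ast\sO_{\widetilde{Y}}\bigr)^{G}=\sO_{\widehat{X}}$.

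Since $X$ has quotient singularities, $K_X$ is $\QQ$-Cartier, so $\pi^\ast K_X$ and $\psi^\ast K_X$ are defined; one has $\pi^\ast K_X=K_Y-R$ with $R$ the (effective) ramification divisor, which is an integral — hence Cartier — divisor on the smooth $Y$, and $E=K_{\widehat{X}}-\psi^\ast K_X$ is supported on the exceptional locus of $\psi$. Functoriality of the pull-back along the commuting square gives $\phi^\ast\pi^\ast K_X=\epsilon^\ast\psi^\ast K_X$; combined with $K_{\widehat{X}}$ being Cartier, the identity $\epsilon^\ast(dE)=d\,\epsilon^\ast K_{\widehat{X}}-d\,\phi^\ast\pi^\ast K_X$ shows $\epsilon^\ast(dE)$ is an integral Weil divisor on $\widetilde{Y}$, so that $\sO_Y(-dR)$, $\sO_{\widetilde{Y}}(\epsilon^\ast dE)$ and $\sI_d$ are honest rank-one sheaves. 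That $\sI_d\subseteq\sO_Y$ (so $\sI_d$ is really a sheaf of ideals) will come out of the computation below, via $\epsilon^\ast K_{\widehat{X}}\le K_{\widetilde{Y}}$ (ramification formula for $\epsilon$) and the Hartogs-type inclusion $\phi_\ast\sO_{\widetilde{Y}}(dK_{\widetilde{Y}})\subseteq\sO_Y(dK_Y)$, valid because $\phi$ contracts no divisor of the smooth $Y$.

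For the isomorphism: as $d\pi^\ast K_X=dK_Y-dR$ is Cartier, the projection formula for $\phi$ gives
\[
\sO_Y(dK_Y)\otimes\sI_d
=\sO_Y(d\pi^\ast K_X)\otimes\phi_\ast\sO_{\widetilde{Y}}(\epsilon^\ast dE)
=\phi_\ast\sO_{\widetilde{Y}}\bigl(d\,\phi^\ast\pi^\ast K_X+\epsilon^\ast dE\bigr)
=\phi_\ast\,\epsilon^\ast\sO_{\widehat{X}}(dK_{\widehat{X}}),
\]
where the last equality uses $d\,\phi^\ast\pi^\ast K_X+\epsilon^\ast dE=d\,\epsilon^\ast(\psi^\ast K_X+E)=d\,\epsilon^\ast K_{\widehat{X}}$ and that $dK_{\widehat{X}}$ is Cartier. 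Taking global sections, $H^0\bigl(Y,\sO_Y(dK_Y)\otimes\sI_d\bigr)\simeq H^0\bigl(\widetilde{Y},\epsilon^\ast\sO_{\widehat{X}}(dK_{\widehat{X}})\bigr)$, $G$-equivariantly. On the other hand $\sO_{\widehat{X}}(dK_{\widehat{X}})$ is locally free, so the projection formula for $\epsilon$ and $\bigl(\epsilon_\ast\sO_{\widetilde{Y}}\bigr)^{G}=\sO_{\widehat{X}}$ give $\bigl(\epsilon_\ast\epsilon^\ast\sO_{\widehat{X}}(dK_{\widehat{X}})\bigr)^{G}=\sO_{\widehat{X}}(dK_{\widehat{X}})$, hence $H^0\bigl(\widetilde{Y},\epsilon^\ast\sO_{\widehat{X}}(dK_{\widehat{X}})\bigr)^{G}\simeq H^0\bigl(\widehat{X},\sO_{\widehat{X}}(dK_{\widehat{X}})\bigr)$. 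Chaining the two and passing to $G$-invariants gives the claimed isomorphism.

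The projection-formula bookkeeping in the last paragraph is routine; the main obstacle is the first part — constructing $\widetilde{Y}$ and verifying simultaneously that $\phi$ is proper birational, $\epsilon$ finite surjective and $G$-invariant with $\widetilde{Y}/G=\widehat{X}$, and that the canonical pull-backs agree across the square — together with controlling the (possibly singular) $\widetilde{Y}$ and the (possibly non-effective) discrepancy divisor $E$ precisely enough to know that $\sI_d$ is a sheaf of ideals.
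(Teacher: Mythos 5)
Your proposal is correct and follows essentially the same route as the paper: $\widetilde{Y}$ is the normalisation of (the dominant component of) the fibre product $Y\times_{X}\widehat{X}$, $\widehat{X}=\widetilde{Y}/G$ via Zariski's Main Theorem, and the isomorphism comes from the identity $\epsilon^{\ast}dK_{\widehat{X}}=\phi^{\ast}(dK_Y-dR)+\epsilon^{\ast}dE$ combined with the projection formula for $\phi$ and the invariants--quotient identification for $\epsilon$. The only (welcome) additions are your explicit selection of the component $\overline{\pi^{-1}(U)}$ before normalising and your verification that $\sI_d$ is genuinely a sheaf of ideals, points the paper leaves implicit.
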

and then we show how to compute $\phi_{\ast}\sO_{\widetilde{Y}} (\epsilon^{\ast} dE)$ when $X$ has only isolated cyclic quotient singularities. 
It should be mentioned that we need an explicit   basis of $H^0\left(Y,\sO_{Y}(dK_Y)\right)$ to use the theorem. 
So in our applications we will work with equations defining $Y$.

The second motivation for this paper was to investigate methods to construct Calabi-Yau threefolds systematically. Indeed, most of the known Calabi-Yau threefolds are constructed by taking the resolution of a generic anticanonical section of a toric Fano fourfold. This idea stems from Batyrev's seminal paper \cite{Bat} but the complete list of this topologically distinct Calabi-Yau threefolds which one can obtain with this method was obtained with the help of the computer (see \cite{Kreuzer}) with the classification of the 473.800.776 reflexive polytopes in dimension $4$. Apart from these Calabi-Yau threefolds, very few examples are known and their construction involves ad hoc methods such as quotients by group actions (see, for example, \cite{GroupsActing, NewExample, CloserLook}). 

Hence, the idea of using the well-known machinery of the product-quotient varieties could prove to be effective in finding new examples of Calabi-Yau threefolds.
We first prove that no product-quotient variety can be Calabi-Yau. Still, as in \cite{AliceMatteo} for dimension $2$, they may be birational to a Calabi-Yau. We then introduce the concept of a numerical Calabi-Yau variety, that is a variety whose Hodge numbers are compatible with a possible Calabi-Yau minimal model. Then we show that a numerical Calabi-Yau product-quotient threefold is birational to a Calabi-Yau threefold if and only if all its plurigenera are equal to $1$. We construct $12$ families of numerical Calabi-Yau threefolds as product-quotient variety and use our above mentioned Theorem to compute, for two of them, their plurigenera and then determine if they are birational to a Calabi-Yau threefold or not.

Finally we apply our method to show the minimality of several product-quotient surfaces whose quotient model has several noncanonical singularities, thus disproving a conjecture of I. Bauer and the third author, namely \cite{BaPi16}*{Conjecture 1.5}.

The paper is organized as follows.

The first two sections are devoted to some possible applications of a formula for the plurigenera of product-quotient manifolds.

In section \ref{minimal models} we discuss conditions for a product-quotient variety to be minimal. Then we concentrate in the case of dimension $2$,  giving an explicit formula for the number of curves contracted by the morphism onto the minimal model in terms of the plurigenera.

In section \ref{CY 3-folds} we move to dimension $3$, discussing the product-quotient threefolds birational to Calabi-Yau threefolds.

In section \ref{list} we produce, with the help of the computer program MAGMA, $12$ families of numerical Calabi-Yau threefolds.

In section \ref{Main} we prove our main Theorem above, in Proposition \ref{UniqueSim} and Theorem \ref{mainthm}.
%, that reduces the computation of the plurigenus $P_d(\hat{X})$ of a minimal resolution of singularities of a quotient $X:=Y/G$, $Y$ smooth, to the computation of a certain ideal sheaf ${\sI}_d$ on $Y$. Then, 

In section \ref{CyclicQuotients}, we show how to compute ${\sI}_d$ when all stabilizers are cyclic, as in the case of product-quotient varieties.

In section \ref{trueCY} and \ref{fakeCY} we apply our theorem to two of the numerical Calabi-Yau threefolds produced in section \ref{list}, showing that one is birational to a Calabi-Yau threefold and the other is not. 

Finally, in section \ref{MinSurfs}, we discuss the mentioned application of our theorem to certain product-quotient surfaces and explain why this application would be difficult to achieve with existing techniques.

\begin{notation*}
All algebraic varieties in this article are complex, quasi-projective and integral, so irreducible and reduced. 

A curve is an algebraic variety of dimension $1$, a surface is an algebraic variety of dimension $2$.  

For every projective algebraic variety $X$ we consider the dimensions $q_i(X):=h^{i}(X,\sO_X)$ of the cohomology groups of its structure sheaf  for all $1 \leq i \leq \dim X$. For $i=n$ this is the {\it geometric genus} $p_g( X):=q_n(X)$, for $i<n$ they are called  {\it irregularities}. If $X$ is smooth, by Hodge Theory $q_i(X)=h^{i,0}(X):=h^0(X,\Omega^i_X)$. If $X$ is a curve, there are no  irregularities and the geometric genus is the usual genus $g(X)$.
 If $S$ is a surface the unique irregularity $q_1(S)$ is usually denoted by $q(S)$. 
 
  A normal variety $X$ is Gorenstein if its dualizing sheaf $\omega_X$ (\cite{Hartshorne}*{III.7}) is a line bundle.
 If $X$ is Gorenstein in codimension $1$ then $\omega_X$ is a Weil divisorial sheaf and we denote by $K_X$ a canonical divisor, so $\omega_X \cong \sO_X(K_X)$.  
We then define its $d$-th plurigenus $P_d(X)= h^{0}(X,\sO_X(dK_X))$. By Serre duality $P_1(X)=p_g(X)$.
$X$ is $\QQ$-Gorenstein if $K_X$ is $\QQ$-Cartier {\it i.e.} if there exists $d \in \NN$ such that $dK_X$ is Cartier. A normal variety is factorial, resp. $\QQ$-factorial if every integral Weil divisor is Cartier, resp. $\QQ$-Cartier.
 
 We use the symbols  $\sim_{lin}$ for linear equivalence of Cartier divisors,  $\sim_{num}$  for numerical equivalence of $\QQ$-Cartier divisors.

We write $\ZZ_m$ for the cyclic group of order $m$, ${\mathcal D}_m$ for the dihedral group of order $2m$, ${ \mathfrak S}_m$ for the symmetric group in $m$ letters.

For $a,b,c \in \ZZ$, $a\equiv_b c$ if nd only if $b$ divides $a-c$.
\end{notation*}

\section{Minimal models of quotients of product of two curves}\label{minimal models}

Consider a product $\prod_{i=1}^nC_i$ of smooth curves. If the genus of each curve is at least $2$, then $K_{\prod C_i}$ is ample. Moreover, if $G$ is a finite group acting freely in codimension $1$ on $\prod_{i=1}^nC_i$, as in the case of product quotient varieties (of dimension at least $2$), $K_X$ is ample too, where we have set $X:=\left( \prod_{i=1}^n C_i\right) /G$.

In particular, if $G$ acts freely then $X$ is smooth and $K_{X}$ is ample, so $X$ is a smooth minimal variety of general type. 

If the action of $G$ is free in codimension $1$ and $X$ has at worst canonical singularities, then we can take a {\it terminalization} of $X$, i.e. a crepant resolution  $\widehat{X} \rightarrow X$ of the canonical singularities of $X$ such that $\widehat{X}$ has terminal singularities. Then $K_{\widehat{X}}$ is automatically  nef and therefore $\widehat{X}$ is a minimal model of $X$. 

 If $q(\widehat{X})\neq 0$, the Albanese morphism of $\widehat{X}$ gives some obstructions to the existence of $K_{\widehat{X}}$-negative curves, since it contracts every rational curve.
Indeed the first example of a quotient $X=\left( \prod_{i=1}^n C_i\right) /G$ of general type such that a minimal resolution of the singularities  $\widehat{X}$ of $X$ is not a  minimal variety is the product-quotient surface studied in \cite{mistrettapolizzi}*{6.1}. 
%The minimal model is determined by studying the Albanese morphism of $\widehat{X}$. Indeed, in this case $q(\widehat{X})=1$, so the Albanese morphism of $\widehat{X}$ is a fibration onto an elliptic curve,  and therefore all rational curves, including all curves contracted on the minimal model, are contained in its fibres.

We find worth mentioning here that in the similar case of {\it mixed quotients}, {\it i.e.} for minimal resolutions $S$ of  singularities of a quotient $C \times C/G$ where $G$ exchange the factors, there are results guaranteeing the minimality of $S$ if $S$ is irregular (and some more assumptions, see  \cite{FurtherQuotients}*{Theorem 3} and \cite{MQEwAS}*{Theorem 4.5} for the exact statements).

% Indeed, in dimension $2$, if $G$ contains an automorphism that exchanges the factors $C_i$ (in particular $C_1 \cong C_2$)  we have the following results:
%\begin{theorem}[ \cite{FurtherQuotients}*{Theorem 3}]\label{qgeq3} Let $C$ be a curve of genus at least two and let $G$ be a group of automorphisms of $C\times C$  that  contains an automorphism exchanging the factors. 

%Let $\widehat{X}$ be the minimal resolution of the singularities of the quotient $\left( C \times C \right)/G$. If  $q(\widehat{X})\geq 3$ then $\widehat{X}$ is minimal.
%\end{theorem}
%\begin{theorem}[ \cite{MQEwAS}*{Theorem 4.5}]
%Let $C$ be a curve of genus at least two and let $G$ be a group of automorphisms of $C\times C$  that  contains an automorphism exchanging the factors.  Assume moreover that $G$ acts freely in codimension $1$. Let $\widehat{X}$ be the minimal resolution of  singularities of the quotient $\left( C \times C \right)/G$. If  $q(\widehat{X})\neq 0$, then $\widehat{X}$ is minimal.
%\end{theorem}

%Both results are sharp.

If $q(\widehat{X})=0$ we have no Albanese morphism and then determining the minimal model is much more difficult. The first example in the literature of a  product-quotient variety $\widehat{X}$ that is not minimal with  $q(\widehat{X})=0$ is the {\it fake Godeaux surface} in \cite{BaPi12}*{Section 5}, whose minimal model is determined by a complicated {\it ad hoc} argument. 

 See also \cite{BaPi16}*{Section 6} for some conjectures and partial results about sufficient conditions for the minimality of $\widehat{X}$ when $q(\widehat{X})=0$.

On the other hand, a lot of information on the birational class of $X$ can be obtained without running an explicit minimal model program for it, by computing some of the birational invariants of $X$.
The geometric genus and the irregularities of $\widehat{X}$ are its simplest birational invariants. They are not difficult to compute for product-quotient varieties.
The next natural birational  invariants to consider are the plurigenera $P_d(\widehat{X})$, $\forall d \in \NN$. They determine a very important birational invariant, the Kodaira dimension $\kod(\widehat{X})$. 
%This  is  the fundamental invariant used by the Enriques-Kodaira classification of surfaces and its higher dimensional analogues.
If $X$ is of general type, {\it i.e.} $\kod(\widehat{X})=\dim(\widehat{X})$, an important role in the classification theory is played by the volume 
\[
\vol (K_{\widehat{X}}):=(\dim X)! \limsup_{m\rightarrow \infty}\frac{P_m(X)}{m^{\dim X}}
\]
of its canonical divisor, that is a birational invariant determined by the plurigenera. 

Indeed, let us now restrict for the sake of simplicity to the case $n=2$. If $\widehat{X}$ is a surface of general type, then it is well known that it has a unique minimal model $X_{min}$. The natural map of $\widehat{X}$ on its minimal model is the composition of $r$ elementary contractions, where $r=  \vol (K_{\widehat{X}}) - K^2_{\widehat{X}}$, and  $\vol (K_{\widehat{X}})$ equals the self intersection of a canonical divisor of the minimal model.

By \cite{BPV}*{Proposition 5.3} a surface of general type $S$ is minimal if and only if $h^1(\sO_{S}(dK_S))=0$ for all $d \geq 2$.
Then, by Riemann-Roch, $P_d(S)=\chi(\sO_S)+\binom{d}{2}K^2_S$, and therefore
\begin{equation}\label{K2min}
\binom{d}{2}  K^2_S=P_d(S)+q(S)-p_g(S)-1.
\end{equation} 
Since the right-hand side of (\ref{K2min}) is a birational invariant it follows that if $\widehat{X}$ is of general type, then 
\[
\vol (K_{\widehat{X}})=\frac{P_3(\widehat{X})-P_2(\widehat{X})}2=P_2(\widehat{X})+q(\widehat{X})-p_g(\widehat{X})-1.
\]

By the Enriques-Kodaira classification and Castelnuovo rationality criterion, every surface $\widehat{X}$ with  $K^2_{\widehat{X}}>0$ and $P_2(\widehat{X}) \neq 0$  is of general type, so we have the following well known proposition: 
\begin{proposition}\label{vol}
Assume  $\widehat{X}$  is a surface with $K^2_{\widehat{X}}>0$ and $P_2(\widehat{X}) \neq 0$.

Then $\widehat{X}$  is a surface of general type and
\[
\vol (K_{\widehat{X}})
=
P_2(\widehat{X})+q(\widehat{X})-p_g(\widehat{X})-1=
\frac{P_3(\widehat{X})-P_2(\widehat{X})}2.
\]
\end{proposition}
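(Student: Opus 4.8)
The plan is to reduce the statement to the computation already carried out in the discussion preceding it. Granting for the moment that $\widehat{X}$ is of general type, it has a unique minimal model $X_{min}$, and $P_d$, $q$, $p_g$ (equivalently $\chi(\sO)$) are birational invariants, while $\vol(K_{\widehat X})=K^2_{X_{min}}$ as recalled above; so it suffices to establish the formula for $X_{min}$ in place of $\widehat X$. On $X_{min}$, \cite{BPV}*{Proposition 5.3} gives $h^1(\sO_{X_{min}}(dK_{X_{min}}))=0$ for all $d\geq 2$, and Serre duality gives $h^2(\sO_{X_{min}}(dK_{X_{min}}))=h^0(\sO_{X_{min}}((1-d)K_{X_{min}}))=0$ for $d\geq 2$, since $K_{X_{min}}$ is nef and big and hence $(1-d)K_{X_{min}}$ meets an ample divisor negatively and cannot be effective. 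Riemann--Roch then yields $P_d(X_{min})=\chi(\sO_{X_{min}})+\binom{d}{2}K^2_{X_{min}}$, which is exactly equation (\ref{K2min}); evaluating it at $d=2$ and at $d=3$ and invoking birational invariance of the right-hand side gives the two claimed expressions for $\vol(K_{\widehat X})$.

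Thus the only thing left to prove is that the hypotheses $K^2_{\widehat X}>0$ and $P_2(\widehat X)\neq 0$ force $\kod(\widehat X)=2$. I would argue by the Enriques--Kodaira classification. If $\kod(\widehat X)=-\infty$, then $\widehat X$ is birationally ruled, hence $P_2(\widehat X)=0$ (Castelnuovo), contradicting the hypothesis. If $\kod(\widehat X)\in\{0,1\}$, then the minimal model $X_{min}$ of $\widehat X$ is one of: an abelian, bielliptic, $K3$ or Enriques surface (when $\kod=0$), or a minimal properly elliptic surface with $K_{X_{min}}$ a rational pullback from the base of its elliptic fibration (when $\kod=1$); in every case $K^2_{X_{min}}=0$. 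Since $\widehat X\to X_{min}$ is a composition of blow-downs, each of which raises $K^2$ by $1$, we get $K^2_{\widehat X}\leq K^2_{X_{min}}=0$, again contradicting the hypothesis. Hence $\kod(\widehat X)=2$, and $\widehat X$ is of general type.

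The genuinely delicate point is this last case analysis: one must use the full Enriques--Kodaira list to know that the canonical self-intersection of the minimal model vanishes in every Kodaira dimension below $2$, together with the vanishing of $P_2$ on (birationally) ruled surfaces, in order to eliminate all alternatives to general type simultaneously. Everything after that — the vanishing of $h^1$ and $h^2$ of $\sO(dK_{X_{min}})$ for $d\geq 2$, the Riemann--Roch identity (\ref{K2min}), and the passage to $d=2,3$ — is routine bookkeeping that the text has essentially already performed, so I expect no further obstacle.
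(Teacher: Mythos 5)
Your proposal is correct and follows essentially the same route as the paper: the text derives the identity $\binom{d}{2}K^2_S=P_d(S)+q(S)-p_g(S)-1$ on the minimal model via \cite{BPV}*{Proposition 5.3} and Riemann--Roch and then invokes birational invariance, and it dismisses the lower Kodaira dimensions exactly by appealing to the Enriques--Kodaira classification together with Castelnuovo's rationality criterion. You have merely spelled out the details (vanishing of $h^2$, the case analysis for $\kod\le 1$) that the paper compresses into the phrase ``well known proposition''.
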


Similarly, we can compute the volume of the canonical divisor of $\hat{X}$ if we know any pair of plurigenera  $P_d$, $d \geq 2$, or one of its plurigenera, geometric genus and all irregularities. Once we compute $K_X^2$, an easy computation, we immediately deduce whether $\hat{X}$ is minimal and more generally the number $r$ of irreducible curves of $\hat{X}$ contracted on the minimal model.

\section{Product quotient varieties birational to Calabi-Yau threefolds}\label{CY 3-folds}

%An important class of varieties is the class of the Calabi-Yau varieties. The smooth Calabi-Yau varieties are one of the three building blocks of the Beauville-Bogomolov decomposition \cite{BeBo} of smooth projective varieties (and more generally, of compact K\"ahler manifolds) with trivial first Chern class.
The Beauville-Bogomolov theorem has been recently extended to the singular case \cite{AndreasThomas}, requiring an extension of the notion of Calabi-Yau to minimal models. The following is the natural definition, a bit more general than the one necessary for the Beauville-Bogomolov decomposition in \cite{AndreasThomas}.

\begin{definition}
A complex projective variety $Z$ with at most terminal singularities is called Calabi-Yau if it is Gorenstein,
\[
K_Z \sim_{lin} 0 \qquad \makebox{and} \qquad q_i(Z)=0 \qquad \forall 1 \leq i \leq \dim Z -1. 
\]
\end{definition}

Calabi-Yau varieties of dimension $2$ are usually called $K3$ surfaces. 

We first show that there is no Calabi-Yau product-quotient variety.

\begin{proposition}  
Let  $X=(C_1 \times \ldots \times C_n)/G$ be 
the quotient model of 
a product-quotient variety and let $\rho \colon \widehat{X} \to X$ be a partial resolution of the singularities of $X$ such that $\widehat{X}$ has at most terminal singularities. 

Then $K_{\widehat{X}} \not\sim_{num} 0$.
\end{proposition}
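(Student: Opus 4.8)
The plan is to derive a contradiction from $K_{\widehat{X}} \sim_{num} 0$ by pulling back to the smooth product $Y := C_1 \times \ldots \times C_n$ and using the ampleness of $K_Y$. Write $\pi \colon Y \to X$ for the quotient map and recall that since $G$ acts freely in codimension $1$, the ramification divisor $R = K_Y - \pi^*K_X$ is effective and supported on the (codimension $\geq 2$) fixed locus, hence $R = 0$ as a divisor class is too strong — rather, the point is that $K_Y \sim_{lin} \pi^* K_X$ because $G$ acts freely in codimension $1$. First I would make this precise: $\pi$ is étale in codimension $1$, so $\pi^* K_X = K_Y$, and therefore $K_X$ is a $\QQ$-Cartier divisor on $X$ with $\pi^* K_X$ ample (being equal to the ample divisor $K_Y$, using that each $g(C_i) \geq 2$).

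Next I would bring in the partial resolution $\rho \colon \widehat{X} \to X$. Since $\widehat{X}$ has at worst terminal singularities, write $K_{\widehat{X}} = \rho^* K_X + \sum a_i E_i$ with all $a_i > 0$ (discrepancies of terminal singularities are strictly positive), the $E_i$ being the $\rho$-exceptional prime divisors. Now suppose for contradiction that $K_{\widehat{X}} \sim_{num} 0$. Intersecting with a curve: pick an irreducible curve $\Gamma \subset \widehat{X}$ not contained in the exceptional locus and whose image $\rho(\Gamma)$ is a curve. Its pushforward $\rho_* \Gamma$ is a curve in $X$, and $\rho^* K_X \cdot \Gamma = K_X \cdot \rho_*\Gamma$. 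The hard part is to show this last number is strictly positive; this is where the ampleness of $\pi^* K_X = K_Y$ enters, but one must be careful because $K_X$ is only $\QQ$-Cartier, so one works with a multiple $m K_X$ that is Cartier and pulls the curve back along $\pi$: for a component $\Gamma' \subset \pi^{-1}(\rho_*\Gamma)$ one has $\pi^*(mK_X) \cdot \Gamma' = (\deg \pi|_{\Gamma'})^{-1} \cdot m K_X \cdot \rho_* \Gamma$ after accounting for the degree of $\pi$ on that component, and the left side is $m K_Y \cdot \Gamma' > 0$ since $K_Y$ is ample and $\Gamma'$ is a genuine curve. Hence $K_X \cdot \rho_*\Gamma > 0$, so $\rho^* K_X \cdot \Gamma > 0$, while $\sum a_i E_i \cdot \Gamma \geq 0$ because $\Gamma$ is not a component of any $E_i$ (choosing $\Gamma$ to meet the exceptional locus properly, or even disjointly). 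Therefore $K_{\widehat{X}} \cdot \Gamma > 0$, contradicting $K_{\widehat{X}} \sim_{num} 0$.

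The one subtlety to handle carefully is the existence of such a curve $\Gamma$ meeting the exceptional divisors non-negatively and dominating a curve on $X$: this is immediate, e.g. take a general complete-intersection curve in $\widehat{X}$, which avoids the finitely many exceptional divisors' common refinements and is not contained in any $E_i$, and which maps onto a curve since $\rho$ is birational. I expect the main obstacle to be the bookkeeping with the $\QQ$-Cartier index of $K_X$ and the ramification degree of $\pi$ along the chosen curve — the geometric idea (ample pulls back to ample off a codimension-$1$-free quotient, plus positivity of terminal discrepancies) is clean, but stating the intersection-number identity correctly requires tracking multiplicities. An alternative, perhaps cleaner, route avoiding curves entirely: observe $\rho_* K_{\widehat{X}} = K_X$ as Weil divisors, and if $K_{\widehat{X}} \sim_{num} 0$ then $K_X \sim_{num} 0$ on $X$, hence $\pi^* K_X = K_Y \sim_{num} 0$, contradicting the ampleness of $K_Y$; one must only justify that numerical triviality descends/ascends appropriately under the proper birational $\rho$ and the finite $\pi$, which it does via the projection formula on $\QQ$-Cartier classes.
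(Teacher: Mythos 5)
Your overall strategy --- descend ampleness of $K_Y$ along the quasi-\'etale quotient $\pi$ to get $K_X$ ample, then transfer positivity to $\widehat{X}$ along a suitable curve --- is exactly the paper's, but the step you use to control the exceptional contribution contains a genuine error. You write $K_{\widehat{X}}=\rho^*K_X+\sum a_iE_i$ and claim all $a_i>0$ because ``discrepancies of terminal singularities are strictly positive.'' That confuses two different things: terminality of $\widehat{X}$ bounds the discrepancies of divisors extracted \emph{over} $\widehat{X}$, whereas the $a_i$ here are the discrepancies of $X$ along the divisors extracted by $\rho$, and these are governed by the singularities of $X$, not of $\widehat{X}$. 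In the situations this proposition is actually applied to (the numerical Calabi--Yau quotient models), $X$ has non-canonical singularities, so some $a_i$ are strictly negative --- indeed the proposition would be vacuous otherwise, since $K_X$ ample plus canonical singularities already forces $\widehat{X}$ to be of general type. Consequently your inequality $\sum a_iE_i\cdot\Gamma\geq 0$ fails for the curve you propose: a general complete-intersection curve in a projective variety meets \emph{every} nonempty divisor (by ampleness of the hyperplane class), so it does not avoid the $E_i$, and the terms with $a_i<0$ contribute negatively with no a priori lower bound.

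The repair is the one the paper uses, and you gesture at it with ``or even disjointly'': since $\codim\Sing X\geq 2$, choose a curve $C\subset X$ entirely avoiding $\Sing X$ --- e.g.\ a general fibre of $X\to (C_2\times\cdots\times C_n)/G$ --- and set $\widehat{C}=\rho^*C$. Then $\widehat{C}$ is disjoint from $\Exc(\rho)$, every $E_i\cdot\widehat{C}=0$ regardless of the signs of the $a_i$, and $K_{\widehat{X}}\cdot\widehat{C}=K_X\cdot C>0$ by ampleness of $K_X$, which already contradicts $K_{\widehat{X}}\sim_{num}0$. (Your ``alternative route'' has the same soft spot: pushing numerical triviality down from $\widehat{X}$ to $X$ via the projection formula is not immediate for curves through $\Sing X$, again because $\rho^*K_X$ and $K_{\widehat{X}}$ differ by a non-effective exceptional $\QQ$-divisor; but testing against the single curve $C$ above is all that is needed.)
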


\begin{proof}
Let $\pi \colon \prod C_i \rightarrow X$ be the quotient map. Then $\pi$ is unramified in codimension $1$. Since $K_{\prod C_i}$ is ample, then $K_X$ is ample too, so it has strictly positive intersection with every curve of $X$. Since $\codim \Sing X \geq 2$ one can easily find a curve $C$ in $X$ not containing any singular point of $X$: for example a general fibre of the projection   $X=(C_1 \times \ldots \times C_n)/G \rightarrow (C_2 \times \ldots \times C_n)/G$. Set $\widehat{C}=\rho^*C$. Then $K_{\widehat{X}}\widehat{C}=K_XC\neq 0$ and therefore $K_{\widehat{X}} \not\sim_{num} 0$.
\end{proof}

So there is no hope to construct a Calabi-Yau variety directly as partial resolution of the singularities of a product-quotient variety, but one can still hope to get something birational to a Calabi-Yau variety. \cite{AliceMatteo} constructed several $K3$ surfaces that are birational to product-quotient varieties. Their method starts by constructing  product-quotient surfaces with  $p_g=1$ and $q=0$. 

We follow a similar approach for constructing Calabi-Yau threefolds. This leads to the following definition:
\begin{definition}
A normal threefold $\widehat{X}$ is a numerical Calabi-Yau if 
\[
p_g(\widehat{X})=1, \qquad q_i(\widehat{X})=0 \qquad 
\makebox{for} \qquad i=1,2. 
\]
\end{definition}

\begin{proposition}
Let $\widehat{X}$ be a product-quotient threefold.
Assume that $\widehat{X}$ is birational to a Calabi-Yau threefold. Then $\widehat{X}$ is a numerical Calabi-Yau threefold.
\end{proposition}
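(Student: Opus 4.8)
The plan is to show that the four relevant birational invariants of $\widehat{X}$ — namely $p_g$, $q_1$, $q_2$ — take the required values by transporting them from a Calabi-Yau threefold $Z$ birational to $\widehat{X}$. First I would recall that $q_1$ and $q_2$ are birational invariants of a normal projective variety with rational (or at worst canonical/terminal) singularities: more precisely, for a terminal threefold $Z$ one has $h^i(Z,\sO_Z)=h^i(\widetilde{Z},\sO_{\widetilde{Z}})$ for any resolution $\widetilde{Z}\to Z$, since terminal singularities are rational, and the same holds for $\widehat{X}$ because a product-quotient threefold $\widehat{X}=\widetilde{Y}$ is a resolution of $X=(\prod C_i)/G$ and quotient singularities are rational, so its irregularities agree with those of a common resolution of $\widehat{X}$ and $Z$. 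Hence $q_i(\widehat{X})=q_i(Z)$ for $i=1,2$, and $q_i(Z)=0$ by the definition of Calabi-Yau; this gives $q_1(\widehat{X})=q_2(\widehat{X})=0$.

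Next I would treat $p_g$. The geometric genus $p_g=P_1$ is likewise a birational invariant among varieties with at worst canonical singularities (indeed $H^0(Z,\sO_Z(K_Z))\cong H^0(\widetilde{Z},\sO_{\widetilde{Z}}(K_{\widetilde{Z}}))$ for any resolution, and similarly for $\widehat{X}$), so $p_g(\widehat{X})=p_g(Z)$. For the Calabi-Yau threefold $Z$ one has $K_Z\sim_{lin}0$, whence $h^0(Z,\sO_Z(K_Z))=h^0(Z,\sO_Z)=1$ because $Z$ is projective and integral. Therefore $p_g(\widehat{X})=1$, and combined with the previous paragraph this is exactly the assertion that $\widehat{X}$ is a numerical Calabi-Yau threefold.

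The only genuinely nontrivial point — the one I would isolate as the main obstacle — is the birational invariance of the $q_i$ and of $p_g$ in the singular setting, i.e.\ the fact that the relevant singularities (terminal on the $Z$ side, quotient on the $\widehat{X}$ side) do not perturb these cohomology groups. This rests on the rationality of such singularities together with the Leray spectral sequence for a resolution, and on the standard fact that $H^0$ of the canonical sheaf pulls back isomorphically under a resolution of canonical singularities (pluricanonical forms extend). I would phrase this cleanly by passing to a common resolution $W$ dominating both $\widehat{X}$ and $Z$: then $q_i(W)=q_i(\widehat{X})$ and $q_i(W)=q_i(Z)$ because both $\widehat{X}$ and $Z$ have rational singularities, and $p_g(W)=p_g(\widehat{X})=p_g(Z)$ because both have canonical singularities, after which the computation on $Z$ finishes the proof. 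No delicate estimate is needed; the content is entirely in invoking the correct vanishing/extension statements for rational and canonical singularities.
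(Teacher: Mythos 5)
Your proposal is correct and follows essentially the same route as the paper: pass to a common resolution, use that terminal (resp.\ quotient) singularities are rational so that $p_g$ and the $q_i$ transport unchanged, and then read off the values on the Calabi-Yau model $Z$. The only cosmetic difference is that the paper treats $p_g$ together with the $q_i$ via rationality and the Leray spectral sequence (since it defines $p_g$ as $h^n(\sO)$), whereas you justify the invariance of $p_g$ separately through extension of canonical forms; both are valid.
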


\begin{proof} Let $Z$ be a Calabi-Yau threefold birational to $\widehat{X}$. To prove that $\widehat{X}$ is a numerical Calabi-Yau, we take a common resolution $\widehat{Z}$ of the singularities of 
$Z$ and of $\widehat{X}$. Since $\widehat{X}$ and $Z$ have terminal singularities, and terminal singularities are rational (see \cite{Elkik}), it follows that
\[
p_g(Z)=p_g(\widehat{Z})=p_g(\widehat{X}) \quad \makebox{and} \quad q_i(Z)=q_i(\widehat{Z})=q_i(\widehat{X}) 
\]
by the Leray spectral sequence. 
\end{proof}

\begin{remark}
It follows that the quotient model of a numerical Calabi-Yau product-quotient threefold has at least one singular point that is not canonical.
Indeed, since the quotient map $\pi \colon C_1 \times C_2 \times C_3 \to X$ is quasi-\'etale and the curves $C_i$ have genus at least two, then $K_X$ is ample.
 If $X$ had only canonical singularities, then $\widehat{X}$ would be of general type, and so would be $Z$, a contradiction.
\end{remark}
\begin{remark}
Let $X$ be the quotient model of a numerical Calabi-Yau product-quotient threefold and 
$\rho \colon \widehat{X} \to X$ be a resolution, then $p_g(\widehat{X})=1 \Rightarrow \kappa(\widehat{X})\neq -\infty$.
Now we run a Minimal Model Program on $\widehat{X}$. Assume that it ends with a Mori fibre space, then 
$\kappa(\widehat{X})=-\infty$ according to \cite{mat}*{Theorem 3-2-3}) which is impossible. Therefore, the Minimal Model Program ends with a threefold $Z$ with terminal singularities and $K_{Z}$ nef. 
\end{remark}

We close this section with its main result,  a criterion to decide whether a numerical Calabi-Yau product-quotient 
threefold is birational to a Calabi-Yau threefold.

\begin{proposition}\label{plurigenera==>CY}
Let $\widehat{X}$ be a numerical Calabi-Yau product-quotient threefold.

If $P_d(\widehat{X})=1$ for all $d \geq 1$, then  $\widehat{X}$ is birational to a Calabi-Yau threefold.
\end{proposition}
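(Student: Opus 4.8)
The plan is to pass from $\widehat X$ to a minimal model and to observe that, once the canonical class is nef, the hypothesis $P_d=1$ for all $d$ upgrades it to a \emph{linearly} trivial class.

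First I would invoke the Minimal Model Program exactly as in the Remark preceding the statement: since $P_1(\widehat X)=p_g(\widehat X)=1$ we have $\kappa(\widehat X)\geq 0$, so running an MMP on $\widehat X$ produces a threefold $Z$, birational to $\widehat X$, with at worst terminal singularities and $K_Z$ nef. Next I would note that $\widehat X$ and $Z$ share all the relevant invariants: terminal singularities are rational (\cite{Elkik}), so on a common resolution $\widehat Z$ of $Z$ and $\widehat X$ one computes $p_g$, the irregularities $q_i$ and every plurigenus $P_d$, and these agree for $\widehat Z$, $Z$ and $\widehat X$. Hence $Z$ is again a numerical Calabi-Yau threefold and $P_d(Z)=P_d(\widehat X)=1$ for all $d\geq 1$; in particular $\kappa(Z)=0$.

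Now the hypothesis enters. Since $K_Z$ is nef and $\kappa(Z)=0$, the Abundance Theorem for threefolds (see e.g. \cite{mat}) gives that $K_Z$ is semiample and that the associated Iitaka fibration maps $Z$ to a point; therefore $mK_Z\sim_{lin}0$ for some $m\geq 1$. On the other hand $p_g(Z)=h^0\big(Z,\sO_Z(K_Z)\big)=1>0$, so $K_Z$ is linearly equivalent to an effective Weil divisor $D\geq 0$. Then $mD\sim_{lin}mK_Z\sim_{lin}0$, and an effective divisor on a projective variety which is linearly equivalent to zero must be the zero divisor (because $h^0(\sO_Z)=1$); hence $mD=0$, so $D=0$ and $K_Z\sim_{lin}0$. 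Consequently $\omega_Z\cong\sO_Z(K_Z)\cong\sO_Z$ is a line bundle, so $Z$ is Gorenstein. Together with the facts that $Z$ has terminal singularities and $q_1(Z)=q_2(Z)=0$, this is precisely the statement that $Z$ is a Calabi-Yau threefold; and $Z$ is birational to $\widehat X$, which is what we want.

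The conceptual heart of the argument is this last step: the elementary observation that an effective divisor which is (numerically, hence here linearly) trivial must vanish, which is exactly where the assumption $P_d=1$ is spent — without it one would only recover that $\widehat X$ has Kodaira dimension $0$. The genuine weight of the proof, however, is carried by two deep inputs that I would simply cite: the Minimal Model Program for threefolds, used to produce $Z$ with $K_Z$ nef, and the Abundance Theorem for threefolds, used to turn ``$K_Z$ nef with $\kappa=0$'' into ``$mK_Z\sim_{lin}0$''. A secondary point requiring a little care is the birational invariance of the plurigenera within the category of projective threefolds with terminal singularities, which is what lets the equalities $P_d=1$ descend from $\widehat X$ to the minimal model $Z$.
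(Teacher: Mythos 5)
Your argument is correct and follows essentially the same route as the paper's: pass to a minimal model $Z$, invoke Kawamata's abundance to get $m K_Z\sim_{lin}0$, and then use $p_g(Z)=1$ to upgrade torsion triviality to $K_Z\sim_{lin}0$. The only cosmetic difference is in that last step, where the paper restricts to the smooth locus $Z^0$ and argues with the torsion line bundle $K_{Z^0}$ before extending by normality, while you argue directly that an effective Weil divisor $D\sim_{lin}K_Z$ with $mD\sim_{lin}0$ must vanish — both are valid and rest on the same use of the hypothesis.
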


\begin{proof} Let $Z$ be a minimal model of $\widehat{X}$. It suffices to show that $K_Z$ is trivial. 
According to Kawamata's abundance for minimal threefolds \cite{Kawa}, some multiple $m_0K_Z$ is base point free. By assumption $h^0(m_0K_Z)=h^0(m_0K_{\widehat{X}})= 1$, which implies that 
$m_0K_Z$ is trivial. In particular $m_0K_{Z^0}$ is trivial, where $Z^0=Z \setminus Sing(Z)$ is the smooth locus. Since $Z$ has terminal singularities $h^0(K_{Z^0})=h^0(K_{\widehat{X}})=1$ and it follows that $K_{Z^0}$ is trivial. By normality, $K_Z$ must be also trivial. 
\end{proof}

\section{Examples of numerical Calabi-Yau product-quotient threefolds}
\label{list}

In this section we present an algorithm that allows us to systematically search for numerical 
Calabi-Yau threefolds. We use a 
MAGMA implementation of this algorithm to produce a list of examples of such threefolds. 
For a detailed account about classification algorithms and the 
language of product quotients, we refer to \cite{ChristianThesis}.

\noindent
To describe the idea of the algorithm, suppose that the quotient model of  a numerical Calabi-Yau threefold
\[
X=\big(C_1 \times C_2 \times C_3\big)/G 
\]
is given. Then  $C_i/G \cong \PP^1$ and we have three  $G$-covers $f_i \colon C_i \to \mathbb P^1$. 
Let $b_{i,1}, \ldots, b_{i,r_i}$ be the branch points of $f_i$ and denote by 
$T_i:=[m_{i,1}, \ldots, m_{i,r_i}]$ the three unordered  lists of branching indices, these will be called the types in the sequel.  

\begin{proposition}
The type $T_i:=[m_{i,1}, \ldots, m_{i,r_i}]$ satisfies the following properties:
\begin{itemize}
\item[i)] $ m_{i,j} \leq  4g(C_i) + 2$,
\item[ii)] $m_{i,j}$ divides the order of $G$, 
\item[iii)] $r_i \leq \dfrac{4\big(g(C_i) - 1 \big)}{n} + 4$,
\item[iv)] ${\displaystyle 2g(C_i)-2= |G| \bigg(-2 + \sum_{j=1}^{r_i} \frac{m_{i,j}-1}{m_{i,j}}\bigg)}$ 
\end{itemize}
\end{proposition}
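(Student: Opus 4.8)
The plan is to read off all four statements from the geometry of the Galois covers $f_i\colon C_i\to C_i/G\cong\PP^1$; only one of them needs more than Riemann--Hurwitz and a short counting argument. Fix $i$ and abbreviate $C:=C_i$, $g:=g(C)$, $r:=r_i$, $m_j:=m_{i,j}$. Over a branch point $b_{i,j}$ the fibre of $f_i$ is a single $G$-orbit, and for a point $p$ in that orbit the inertia group $\Stab_G(p)$ acts faithfully on the tangent line $T_pC$, hence is cyclic of some order $m_j$; then all $|G|/m_j$ points of the fibre are ramification points of $f_i$ of index $m_j$. Item (ii) is now immediate, since $m_j=|\Stab_G(p)|$ divides $|G|=\deg f_i$. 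Item (iv) is the Riemann--Hurwitz formula for $f_i$: the points over $b_{i,j}$ contribute $\tfrac{|G|}{m_j}(m_j-1)=|G|\tfrac{m_j-1}{m_j}$ to the ramification divisor and $g(C_i/G)=0$, so
\[
2g-2\;=\;-2|G|\;+\;\sum_{j=1}^{r}|G|\,\frac{m_j-1}{m_j}\;=\;|G|\Bigl(-2+\sum_{j=1}^{r}\frac{m_j-1}{m_j}\Bigr).
\]

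For item (i) I would note that $\Stab_G(p)$ is a cyclic group of automorphisms of $C$ of order $m_j$ and quote the classical bound of Wiman: a curve of genus $\geq 2$ carries no automorphism --- equivalently, no cyclic subgroup of $\Aut C$ --- of order exceeding $4g+2$. Thus $m_j\leq 4g(C_i)+2$. One could instead apply Riemann--Hurwitz to $C\to C/\Stab_G(p)$, but the inequality this produces does not by itself yield the sharp constant $4g+2$: one must also exploit that the local monodromies generate the cyclic quotient group, and that bookkeeping is exactly Wiman's argument, so citing it is cleaner.

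For item (iii) I would rewrite (iv) as $\sum_{j=1}^{r}\bigl(1-\tfrac{1}{m_j}\bigr)=2+\tfrac{2(g-1)}{|G|}$. As $m_j\geq 2$ every summand is $\geq\tfrac{1}{2}$, hence $\tfrac{r}{2}\leq 2+\tfrac{2(g-1)}{|G|}$, that is $r\leq 4+\tfrac{4(g-1)}{|G|}$; since $g>1$ it now suffices to prove $|G|\geq n\ (=3)$. This is where the numerical Calabi--Yau hypothesis enters. If $|G|=1$ then $C_i/G=C_i$ has genus $\geq 2$, contradicting $q_1(\widehat{X})=0$: indeed the singularities of $X$ are quotient singularities, hence rational, so $q_1(\widehat{X})=q_1(X)=\sum_i g(C_i/G)$, forcing $g(C_i/G)=0$. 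If $|G|=2$, then $C_i/G\cong\PP^1$ forces the nontrivial element $g\in G$ to act on each $C_i$ as a hyperelliptic involution, hence as $-\Id$ on $H^0(C_i,\omega_{C_i})$, hence as $-\Id$ on $H^0(\omega_{C_1})\otimes H^0(\omega_{C_2})\otimes H^0(\omega_{C_3})$; therefore $p_g(\widehat{X})=p_g(X)=\bigl(H^0(\omega_{C_1})\otimes H^0(\omega_{C_2})\otimes H^0(\omega_{C_3})\bigr)^G=0$, contradicting $p_g(\widehat{X})=1$. So $|G|\geq 3=n$ and (iii) follows.

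The only step that is more than bookkeeping is (i): Riemann--Hurwitz alone permits cyclic actions on $C$ of order larger than $4g+2$ which are never realised by an actual cyclic cover of $\PP^1$, so extracting the optimal constant genuinely requires Wiman's theorem (or a re-derivation of it). Items (ii) and (iv) are immediate, and (iii) reduces to Riemann--Hurwitz together with the lower bound $|G|\geq n$, which is the one place the numerical Calabi--Yau assumption is used.
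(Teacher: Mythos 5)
Your proof is correct and takes essentially the same route as the paper's: (ii) from the stabilizer orders, (iv) from Riemann--Hurwitz, (i) from Wiman's bound, and (iii) from (iv) together with $m_{i,j}\geq 2$. The one divergence is notational: in this section the paper uses $n$ for the order of $G$ (``the order $n$ of the group $G$'' in the 1st Step), so your inequality $r_i\leq 4+4(g(C_i)-1)/|G|$ is already the assertion of (iii), and the additional argument that $|G|\geq 3$, while correct, is not needed.
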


\begin{proof}
$ii)$ follows from the fact that the $m_{i,j}$ are the orders of the stabilizers of the points above the branch 
points $b_{i,j}$.  

$i)$ is an immediate consequence of the classical bound of Wiman \cite{Wiman} for the order of an automorphism of a curve of genus at least $2$, since the stabilizers are cyclic.

$iv)$ is the Riemann-Hurwitz formula. $iii)$ follows from $iv)$ and  $m_{i,j} \geq 2$. 
\end{proof}

{\bf 1st Step:} 
The first step of the algorithm is based on the proposition above. 
As an input value we fix an integer $g_{max}$. The output is a full list of numerical Calabi-Yau product-quotient threefolds, such that the genera of the curves $C_i$ are bounded from above by $g_{max}$.

According to the Hurwitz bound on the automorphism group, we have
\[
|Aut(C_i)| \leq 84(g_{max} -1).
\]
Consequently there are only finitely many possibilities for the order $n$ of the group $G$. 
On the other hand, for fixed $g_i \leq g_{max}$ and fixed group order $n$, there are only finitely 
many possibilities for integers  $m_{i,j} \geq 2$ fulfilling the constraints from the proposition above. 
We wrote a MAGMA code, that returns all admissible combinations 
\[
[g_1,g_2,g_3,n,T_1,T_2,T_3]. 
\]

\bigskip
\noindent
{\bf 2nd Step:} For each tuple $[g_1,g_2,g_3,n,T_1,T_2,T_3]$
determined in the first step, we search through the groups $G$ of order $n$ and check if we can 
realize three $G$ covers $f_i \colon C_i \to \mathbb P^1$
with branching indices $T_i:=[m_{i,1}, \ldots, m_{i,r_i}]$. 
By \emph{Riemann's existence theorem} such  covers exist if and only if 
there are elements $h_{i,j} \in G$ of order $m_{i,j}$, which generate $G$ and fulfill the relations 
\[
\prod_{j=1}^{r_i}  h_{i,j} = 1_G \qquad \makebox{for each} \quad 1 \le i \le 3. 
\]
Let $X$ be the quotient of $C_1 \times C_2 \times C_3$ by the diagonal action of $G$. 
The singularities 
\[
\frac{1}{n}(1,a,b)
\]
of  $X$ can be determined  using the elements $h_{i,j}$  
cf. \cite{BaPi12}*{Proposition 1.17}. The same is true for the invariants $p_g$ and $q_i$ of a resolution cf. 
\cite{FG16}*{Section 3}, since they are given as the dimensions of the $G$-invariant parts of 
$H^0(\Omega^p_{C_1 \times C_2 \times C_3})$, 
which can be determined using the formula formula of \emph{Chevalley-Weil} see 
\cite{FG16}*{Theorem 2.8}. 
The threefolds with only canonical singularities are discarded as well as those 
with invariants different from $p_g=1$, $q_1=q_2=0$. 
As an output we return the following data of $X$: 
the group $G$, the types $T_i$, the set of canonical singularities $\mathcal S_c$ and the set of 
non-canonical singularities $\mathcal S_{nc}$. 

We run our MAGMA implementation of the algorithm for $g_{max}=6$ and the additional restriction that the 
$f_i \colon C_i \to \mathbb P^1$
are branched in only three points i.e. $r_i=3$. The output is in Table \ref{figure CY}.

They may be birational to a Calabi-Yau threefold or not. Both cases occur, as we will see in Sections \ref{trueCY} and \ref{fakeCY}.

\begin{table}{H} 
{\scriptsize 
\begin{tabular}{|l|l|l|l|l|l|l|l|}
\hline
No. & $G$ & $Id$ &$T_1$ & $T_2$ & $T_3$  &  $\mathcal S_c$ & 
$\mathcal S_{nc}$ \\ 
\hline
\hline
1 & $\mathbb Z_6$ & $\langle 6,2 \rangle$ &$[3,6,6] $ & $[3,6,6] $ & $[3,6,6] $  &  
$\frac{(1,1)}{3}^4, \frac{(2,2)}{3}^{24}$ & 
$\frac{(1,1)}{6}^8$ \\ 
\hline
2 & $\mathbb Z_8$ & $\langle 8, 1\rangle$ &$[2,8,8]$ & $[2,8,8]$ & $[4,8,8]$  &  
$\frac{(1,1)}{2}^{32}, \frac{(1,3)}{4}^3$ & 
$\frac{(1,1)}{4},  \frac{(1,1)}{8}^2,  \frac{(1,3)}{8}^6$ \\ 
\hline
3 & $\mathbb Z_{10}$ & $\langle 10, 2 \rangle$ &$[2,5,10] $ & $[2,5,10] $ & $[5,10,10]$  &  
$\frac{(1,1)}{2}^{14},  \frac{(1,4)}{5},  \frac{(2,2)}{5}^8, 
\frac{(2,3)}{5}^4$ & 
$\frac{(1,2)}{5}^4,  \frac{(1,1)}{10}^2$ \\ 
\hline 
4 & $\mathbb Z_{12}$ & $\langle 12, 2 \rangle$ &$[2,12,12]$ & $[2,12,12]$ & $[3,4,12]$  &  
$\frac{(1,1)}{2}^{40},  \frac{(1,1)}{3},  \frac{(2,2)}{3}^3$ & 
$\frac{(1,1)}{4}^4,  \frac{(1,1)}{12},  \frac{(5,5)}{12}^3$ \\ 
\hline
5 & $(\mathbb Z_4 \times \mathbb Z_4) \rtimes \mathbb Z_2$ & $\langle 32, 11 \rangle$ & 
$[2,4,8]$ & $[2,4,8]$ & $[2,4,8]$  &  
$\frac{(1,1)}{2}^{24}$ & $
\frac{(1,1)}{4}^6,  \frac{(1,1)}{8},  
\frac{(1,5)}{8}^3$ \\ 
\hline
6 & $(\mathcal D_4 \times \mathbb Z_2) \rtimes \mathbb Z_4$ & $\langle 64, 8 \rangle$ & 
$[2,4,8]$ & $[2,4,8]$ & $[2,4,8]$  &  
$\frac{(1,1)}{2}^{60}$ & $\frac{(1,1)}{4}^{6}, \frac{(1,1)}{8}^{4} $ \\ 
\hline
7 & $\mathfrak S_4 \times \mathbb Z_3$ & $\langle 72, 42 \rangle$ & $[2,3,12]$ & $[2,3,12]$ & $[2,3,12]$  &  
$\frac{(1,1)}{2}^{36}, \frac{(1,1)}{3}^{17}$ & $\frac{(1,1)}{12}, \frac{(1,7)}{12}^{3} $ \\ 
\hline
8 & $\mathbb Z_2^4 \rtimes \mathbb Z_5$ & $\langle 80, 49 \rangle$ & 
$[2,5,5]$ & $[2,5,5]$ & $[2,5,5]$  &  
$\frac{(1,4)}{5}^{6} $ & 
$\frac{(1,1)}{5}^{2}$ \\ 
\hline
9 & $\mathbb Z_2^4 \rtimes \mathbb Z_5$ & $\langle 80, 49 \rangle$ &$[2,5,5]$ & $[2,5,5]$ & $[2,5,5]$  &  
$\frac{(1,3)}{5}^{2}, \frac{(3,4)}{5}^{4} $ & $\frac{(1,2)}{5}^{2} $ \\ 
\hline
10 & $\mathbb Z_4^2 \rtimes \mathfrak S_3$ & $\langle 96, 64 \rangle$ & $[2,3,8]$ & $[2,3,8]$ & $[2,3,8]$  &  
$\frac{(1,1)}{2}^{16}, \frac{(1,1)}{3}, \frac{(2,2)}{3}^{3}$ & 
$\frac{(1,1)}{4}^{6}, \frac{(1,1)}{8}, \frac{(1,5)}{8}^{3} $ \\ 
\hline
11 & $GL(3,\mathbb F_2)$ & $\langle 168, 42 \rangle$ &$[2,3,7] $ & $[2,3,7] $ & $[2,3,7]$  &  
$\frac{(1,1)}{2}^{16},\frac{(1,1)}{3}, \frac{(2,2)}{3}^3, \frac{(2,4)}{7}^2 $ 
& $\frac{(1,1)}{7}, \frac{(1,4)}{7}^{3}, \frac{(4,4)}{7}^{3}$ \\ 
\hline
12 & $G_{192}$ & $\langle 192, 181 \rangle$ & $[2,3,8]$ & $[2,3,8]$ & $[2,3,8]$  &  
$\frac{(1,1)}{2}^{28}, \frac{(1,1)}{3}^{4}, \frac{(2,2)}{3}^{12} $ 
& $\frac{(1,1)}{4}^{6}, \frac{(1,1)}{8}^{4}$ \\
\hline
\end{tabular}
\caption{\scriptsize Some numerical Calabi-Yau product-quotient threefolds.  Each row corresponds to a threefold, each column to one of the data of the construction: from left to right the group $G$, its Id in the MAGMA database of finite groups, the three types, the canonical singularities and the singularities that are not canonical.  The symbol $\frac{(a,b)}{n}^{\lambda}$ used in the last two columns of the table denotes $\lambda$ cyclic quotient singularities of type $\frac{1}{n}(1,a,b)$. We recall the definition in Section \ref{CyclicQuotients}. }\label{figure CY}
}
\end{table}

\section{The sheaves of ideals \texorpdfstring{$\sI_d$}{Id} on a smooth projective variety with a finite group action }\label{Main}

\begin{proposition}\label{UniqueSim}
Let $Y$ be a smooth quasi-projective variety, let $G$ be a finite subgroup of $\Aut(Y)$ and let
$\psi \colon \widehat{X} \to Y/G$ be a resolution of the singularities.
Then there exists a normal variety 
$\widetilde{Y}$, a proper birational morphism  $\phi \colon \widetilde{Y} \to Y$ and a finite surjective morphism 
$\epsilon \colon \widetilde{Y} \to \widehat{X}$ such  that the following diagram commutes:
\[
\begin{xy}
  \xymatrix{
     \widetilde{Y}\ar[rr]^{\epsilon}\ar[d]_{\phi}  &  & \widehat{X} \ar[d]^{\psi} \\   
		 Y \ar[rr]_{\pi}  &   & Y/G
		}
\end{xy}
\]
Up to isomorphism  $\widetilde{Y}$ is the normalisation of the fibre product 
$Y \times_{Y/G} \widehat{X}$ and $\phi$ and $\epsilon$ are the natural maps. 
\end{proposition}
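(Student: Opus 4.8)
The plan is to construct $\widetilde{Y}$ explicitly as the normalisation of the fibre product $W := Y \times_{Y/G} \widehat{X}$, and then verify all the asserted properties. First I would recall the universal property of the fibre product: since $\pi \colon Y \to Y/G$ and $\psi \colon \widehat{X} \to Y/G$ are morphisms of quasi-projective varieties, $W$ is a (possibly reducible, possibly non-reduced) closed subscheme of $Y \times \widehat{X}$, equipped with projections $\mathrm{pr}_1 \colon W \to Y$ and $\mathrm{pr}_2 \colon W \to \widehat{X}$ making the square commute. The first genuine point is to identify an irreducible component of $W$ dominating $Y$: since $\psi$ is birational, there is an open dense $U \subseteq Y/G$ over which $\psi$ is an isomorphism, and then $\mathrm{pr}_1^{-1}(\pi^{-1}(U)) \to \pi^{-1}(U)$ is an isomorphism; let $W_0$ be the closure of this locus, an irreducible component of $W$ that maps birationally onto $Y$ via $\mathrm{pr}_1$. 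I would then take $\widetilde{Y}$ to be the normalisation of $W_0$ (equivalently, of $W_{\mathrm{red}}$ restricted to this component), with $\phi$ and $\epsilon$ the compositions of the normalisation map with $\mathrm{pr}_1$ and $\mathrm{pr}_2$ respectively. Commutativity of the diagram is then immediate from that of the fibre-product square.

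Next I would check the three properties: normality of $\widetilde{Y}$ is automatic by construction; $\phi$ is birational because $\mathrm{pr}_1|_{W_0}$ is, and it is proper because $\psi$ is proper (properness is stable under base change, so $\mathrm{pr}_1 \colon W \to Y$ is proper, hence so is its restriction to a closed component and the finite normalisation map on top). For $\epsilon$ being finite and surjective: $\mathrm{pr}_2 \colon W \to \widehat{X}$ is the base change of $\pi \colon Y \to Y/G$ along $\psi$, and $\pi$ is finite (it is a quotient by a finite group) and surjective, so $\mathrm{pr}_2$ is finite and surjective; restricting to $W_0$ one must argue that $W_0$ still surjects onto $\widehat{X}$ — this follows because $\mathrm{pr}_2(W_0) \supseteq \mathrm{pr}_2(\mathrm{pr}_1^{-1}(\pi^{-1}(U))) = \psi^{-1}(U)$, which is dense in $\widehat{X}$, and $\mathrm{pr}_2|_{W_0}$ is closed (being finite on $W$), hence surjective; finiteness is inherited since a closed subscheme of a finite scheme over $\widehat{X}$ is finite over $\widehat{X}$, and the normalisation map is finite because $\widehat{X}$, being a variety, has $W_0$ of finite type with the same function field extension degree — more carefully, $\widetilde{Y}$ is finite over the normal variety $\widehat{X}$ because it is the normalisation of $\widehat{X}$ in the finite field extension $\CC(\widetilde{Y})/\CC(\widehat{X})$.

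For the uniqueness clause ``up to isomorphism'', I would argue as follows: given any $(\widetilde{Y}', \phi', \epsilon')$ with the stated properties, the pair $(\phi', \epsilon')$ induces by the universal property of the fibre product a morphism $\widetilde{Y}' \to W$; since $\widetilde{Y}'$ is integral (a variety) and $\phi'$ is birational onto $Y$, this morphism factors through the component $W_0$ and is birational onto it; since $\widetilde{Y}'$ is normal, it factors uniquely through the normalisation $\widetilde{Y} \to W_0$, giving a birational morphism $\widetilde{Y}' \to \widetilde{Y}$; finally, this map is finite (being a morphism between two varieties each finite over $\widehat{X}$, compatible with the finite maps to $\widehat{X}$) and birational onto a normal variety, hence an isomorphism by Zariski's main theorem. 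The symmetric argument, or simply the uniqueness of normalisation, gives the inverse. I also need to observe at the outset that $\CC(\widetilde{Y})/\CC(Y)$... wait, rather that the relevant function field is $\CC(\widehat{X}) = \CC(Y/G) = \CC(Y)^G$, so the identifications of function fields are consistent; this is a routine check.

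The main obstacle, I expect, is the bookkeeping around the fibre product $W$ being reducible or non-reduced: one must carefully isolate the correct component $W_0$ (the one ``diagonal-like'' piece coming from the open set where $\psi$ is an isomorphism) and then check that the finiteness and surjectivity of $\epsilon$ survive the restriction to this component and the passage to normalisation. None of the individual steps is deep — it is Zariski's main theorem plus stability of finiteness/properness under base change plus the universal properties of fibre product and normalisation — but assembling them cleanly, and in particular verifying that $W_0 \to \widehat{X}$ is still surjective and finite rather than merely generically finite, is where care is needed.
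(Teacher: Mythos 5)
Your proposal is correct and follows essentially the same route as the paper, whose entire proof is the one-line remark that the statement is ``just a combination of the universal property of the fibre product and the universal property of the normalisation.'' You simply supply the details the authors leave implicit --- in particular isolating the unique irreducible component of $Y \times_{Y/G} \widehat{X}$ dominating $Y$ before normalising, and checking that finiteness and surjectivity of $\epsilon$ survive this restriction --- all of which is carried out correctly.
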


The proof of this proposition is just a combination of the universal property of the fibre product and the universal property of the normalisation.

\begin{remark}
Note that $\widetilde{Y}$  in general fails to be smooth cf. \cite{Koll07}*{Example 2.30}. 
\end{remark}

\begin{proposition}\label{widetildQ}
The $G$ action on $Y$ lifts to an action on $\widetilde{Y}$ such that $\widehat{X}$ is the quotient.
\end{proposition}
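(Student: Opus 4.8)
The plan is to realise $\widetilde{Y}$ as the normalisation of the fibre product $Y\times_{Y/G}\widehat{X}$ provided by Proposition~\ref{UniqueSim}, and to obtain the lifted action by transport of structure. First I would let $G$ act on $Y\times_{Y/G}\widehat{X}$ through the first factor and trivially on $\widehat{X}$: this is well defined because $\pi$ is $G$-invariant, so $\pi(gy)=\pi(y)=\psi(x)$ for every point $(y,x)$ of the fibre product. The second projection is $G$-invariant, hence $G$ preserves the unique irreducible component of the fibre product dominating $Y$, whose normalisation is $\widetilde{Y}$; by functoriality of normalisation the action lifts uniquely to $\widetilde{Y}$, with $g\mapsto\tilde{g}$ a group homomorphism $G\to\Aut(\widetilde{Y})$. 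By construction $\phi$ is $G$-equivariant and $\epsilon$ is $G$-invariant.

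It then remains to prove that $\epsilon$ realises $\widehat{X}$ as the quotient $\widetilde{Y}/G$, i.e.\ that the morphism $q'\colon Z:=\widetilde{Y}/G\to\widehat{X}$ induced by the $G$-invariance of $\epsilon$ is an isomorphism. Here $Z$ exists as a quasi-projective variety and the quotient morphism $q\colon\widetilde{Y}\to Z$ is finite, because $\widetilde{Y}$ is quasi-projective and $G$ is finite; moreover $Z$ is normal, being the quotient of a normal variety by a finite group, and $\epsilon=q'\circ q$. I would then verify three properties of $q'$. \textbf{(Birational):} over the dense open $U\subseteq Y/G$ on which $\pi$ is \'etale and $\psi$ is an isomorphism, the fibre product restricts to the \'etale $G$-cover $\pi^{-1}(U)\to U$, which is already normal, so $\widetilde{Y}|_U\cong\pi^{-1}(U)$ and $\epsilon|_U$ is the quotient map $\pi^{-1}(U)\to\pi^{-1}(U)/G=U\cong\psi^{-1}(U)$; hence $q'$ is an isomorphism over $U$. \textbf{(Proper):} the composite $\pi\circ\phi\colon\widetilde{Y}\to Y/G$ is proper, being a proper birational morphism followed by a finite one, and it factors through the proper surjection $q$, so $Z\to Y/G$ is proper; since this morphism is $\psi\circ q'$ and $\psi$ is separated, $q'$ is proper. \textbf{(Quasi-finite):} $\epsilon$ is finite, hence quasi-finite, and $q$ is surjective, so each fibre of $q'$ is the image under $q$ of a finite set. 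A proper quasi-finite morphism is finite, and a finite birational morphism onto a normal variety is an isomorphism; therefore $q'$ is an isomorphism, which is exactly the assertion that $\epsilon$ is the quotient map for the lifted action.

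The formal ingredients — that an automorphism lifts uniquely to the normalisation, and that the finite quotient $\widetilde{Y}/G$ exists and is normal — are routine. The only point needing care is the identification $\widehat{X}\cong\widetilde{Y}/G$: since $\widetilde{Y}$ is in general singular (cf.\ the remark preceding this proposition) one cannot argue via smoothness, and what makes the argument work is that normality alone is preserved under the quotient, which together with normality of $\widehat{X}$ (it is a resolution) reduces matters, after the degree-one computation over $U$, to the standard fact about finite birational morphisms onto normal varieties. I expect the fussiest step to be deducing properness and quasi-finiteness of $q'$ from those of $\pi\circ\phi$ and $\epsilon$ via the cancellation properties of proper morphisms, although each implication there is individually standard.
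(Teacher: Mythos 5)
Your argument is correct and follows essentially the same route as the paper: lift the natural $G$-action on the fibre product to $\widetilde{Y}$ via the universal property of normalisation, then show the induced birational morphism $\widetilde{Y}/G \to \widehat{X}$ is finite and conclude it is an isomorphism by Zariski's Main Theorem (normality of $\widehat{X}$). The only difference is that you justify finiteness in detail via properness plus quasi-finiteness, whereas the paper asserts it directly; this is a welcome elaboration rather than a different approach.
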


\begin{proof}
Consider the natural $G$ action on $Y \times_{Y/G} \widehat{X}$. By the universal property of the normalisation 
it lifts to an action on $\widetilde{Y}$.
The birational map $\widetilde{Y}/G \to \widehat{X}$  induced by $\epsilon$ is finite and  therefore 
an isomorphism by Zariski's Main theorem. 
\end{proof}

\begin{remark}\label{RamForm}
Let $Y$ be a normal quasi-projective variety and $G < \Aut(Y)$ be a finite group, then
the quotient map $\pi \colon Y \to X:=Y/G$ induces an isomorphism 
\[
\pi^{\ast} \colon H^0(X,\sL) \simeq H^0(Y,\pi^{\ast}\sL)^G
\] 
for any line bundle $\sL$ on $X$.
The quotient $X:=Y/G$ is a normal $\mathbb Q$-factorial quasi-projective variety. In particular 
$K_X$ is $\mathbb Q$-Cartier. Let $\psi \colon \widehat{X} \to X$ be a resolution of singularities and $K_X$ be a canonical divisor, then 
\[
K_{\widehat{X}} = \psi^{\ast} K_X + E,
\]
where $E$ is a $\mathbb Q$-divisor supported on the exceptional locus $\Exc(\psi)$. 
Since $\pi$ is finite and $Sing(X) \subset X$ has codimension $\geq 2$, Hurwitz formula holds:
\[
K_Y=\pi^{\ast}K_X + R.  
\]
We point out that $Y$ is smooth, and thus the ramification divisor $R$ is a Cartier divisor. 
\end{remark}

\begin{theorem}\label{mainthm}
Under the assumptions from Proposition \ref{UniqueSim}, 
 there is a natural isomorphism  
\[
H^0\big(\widehat{X}, \sO_{\widehat{X}}(d K_{\widehat{X}})\big) \simeq 
H^0\left(Y,\sO_{Y}(dK_Y) \otimes \mathcal{I}_d \right)^G
\]
for all $d \geq 1$, where 
$\sI_d$ is the sheaf of ideals $\sO_Y(- dR) \otimes \phi_{\ast}\sO_{\widetilde{Y}} (\epsilon^{\ast} dE)$. 
\end{theorem}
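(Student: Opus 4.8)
The plan is to chase sections on the normalized fibre product $\widetilde{Y}$, where both the $G$-action (from Proposition \ref{widetildQ}) and the birational morphism $\phi$ to $Y$ are available, and to compare pluricanonical sheaves on the three varieties $\widehat{X}$, $\widetilde{Y}$ and $Y$. First I would use Remark \ref{RamForm} applied to the finite quotient map $\epsilon \colon \widetilde{Y} \to \widehat{X}$ (whose quotient structure is Proposition \ref{widetildQ}): since $\widehat{X}$ is normal and $\epsilon$ is finite and surjective, pullback induces an isomorphism $H^0(\widehat{X}, \sO_{\widehat{X}}(dK_{\widehat{X}})) \simeq H^0(\widetilde{Y}, \epsilon^{\ast}\sO_{\widehat{X}}(dK_{\widehat{X}}))^G$. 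The point is then to identify $\epsilon^{\ast}\sO_{\widehat{X}}(dK_{\widehat{X}})$, or rather its pushforward to $Y$, with the claimed sheaf $\sO_Y(dK_Y) \otimes \sI_d$.

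The key computation is a divisor identity on $\widetilde{Y}$. Write $K_{\widehat{X}} = \psi^{\ast}K_X + E$ and $K_Y = \pi^{\ast}K_X + R$ as in Remark \ref{RamForm}. Pulling the first back via $\epsilon$ and the second back via $\phi$, and using $\psi \circ \epsilon = \pi \circ \phi$, one gets $\epsilon^{\ast}K_{\widehat{X}} = \phi^{\ast}\pi^{\ast}K_X + \epsilon^{\ast}E$ and $\phi^{\ast}K_Y = \phi^{\ast}\pi^{\ast}K_X + \phi^{\ast}R$, hence the formal identity $\epsilon^{\ast}(dK_{\widehat{X}}) = \phi^{\ast}(dK_Y) - \phi^{\ast}(dR) + \epsilon^{\ast}(dE)$ of $\QQ$-Cartier divisor classes on $\widetilde{Y}$ (here $R$ is Cartier since $Y$ is smooth). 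Therefore $\epsilon^{\ast}\sO_{\widehat{X}}(dK_{\widehat{X}})$ is, as a reflexive/divisorial sheaf on $\widetilde{Y}$, isomorphic to $\phi^{\ast}\sO_Y(dK_Y - dR) \otimes \sO_{\widetilde{Y}}(\epsilon^{\ast}dE)$. Pushing forward by $\phi$ and applying the projection formula — legitimate because $\sO_Y(dK_Y - dR)$ is a line bundle on $Y$, $\phi$ being an isomorphism over the smooth locus where $dE$ is supported away from — yields
\[
\phi_{\ast}\epsilon^{\ast}\sO_{\widehat{X}}(dK_{\widehat{X}}) \simeq \sO_Y(dK_Y) \otimes \sO_Y(-dR) \otimes \phi_{\ast}\sO_{\widetilde{Y}}(\epsilon^{\ast}dE) = \sO_Y(dK_Y) \otimes \sI_d.
\]
Taking $H^0$ and the $G$-invariant part, and noting that $\phi$ being proper birational with $Y$ normal gives $H^0(\widetilde{Y}, -) = H^0(Y, \phi_{\ast}(-))$ compatibly with the $G$-actions, produces the desired isomorphism; naturality is inherited from the naturality of pullback and of the projection formula at each stage.

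The main obstacle I anticipate is the careful bookkeeping at the boundary between Cartier and merely Weil-divisorial sheaves on the possibly-singular $\widetilde{Y}$: one must check that $\epsilon^{\ast}\sO_{\widehat{X}}(dK_{\widehat{X}})$ really is the $\sO$-module $\sO_{\widetilde{Y}}(\epsilon^{\ast}dK_{\widehat{X}})$ in the divisorial sense (this needs $\widehat{X}$ to be $\QQ$-Gorenstein so that $dK_{\widehat{X}}$ is Cartier for suitable $d$, or an argument that $\epsilon$ is étale in codimension one away from the exceptional set so reflexive pullback behaves well), that the identity of divisor classes above actually holds as an identity of reflexive sheaves and not just up to codimension-two discrepancy, and that the projection formula $\phi_{\ast}(\phi^{\ast}\sF \otimes \sG) \simeq \sF \otimes \phi_{\ast}\sG$ is valid here — which it is when $\sF$ is locally free, the case at hand. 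A secondary subtlety is that $\sO_Y(-dR) \otimes \phi_{\ast}\sO_{\widetilde{Y}}(\epsilon^{\ast}dE)$ is genuinely a sheaf of ideals in $\sO_Y$, i.e. that $\phi_{\ast}\sO_{\widetilde{Y}}(\epsilon^{\ast}dE) \subseteq \sO_Y(dR)$, which should follow from the fact that $\epsilon^{\ast}dE - \phi^{\ast}dR$ is anti-effective after restriction, or equivalently from the comparison of the two expressions for $\epsilon^{\ast}dK_{\widehat{X}}$ together with effectivity of $E$ being replaced by the correct sign convention; this must be tracked explicitly to justify calling $\sI_d$ an ideal sheaf.
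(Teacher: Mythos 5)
Your proposal follows essentially the same route as the paper: the same divisor identity $\epsilon^{\ast}dK_{\widehat{X}}=\phi^{\ast}(dK_Y-dR)+\epsilon^{\ast}dE$ obtained by pulling back the two formulas of Remark \ref{RamForm} through the commutative square, the identification of $\widehat{X}$ as the quotient $\widetilde{Y}/G$ to pass to $G$-invariants, and the projection formula along $\phi$. The technical worries you flag are handled in the paper exactly as you suggest (in particular, $\epsilon^{\ast}dE$ is Cartier because it is the difference of the two Cartier divisors $\epsilon^{\ast}dK_{\widehat{X}}$ and $\phi^{\ast}(dK_Y-dR)$, $\widehat{X}$ being smooth), so the argument is correct.
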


\begin{remark}
If we write $E=P-N$, where $P,N$ are effective without common components, then 
$\sI_d \cong \sO_Y(- dR) \otimes \phi_{\ast}\sO_{\widehat{X}} (-\epsilon^{\ast} dN)$.
\end{remark}

\begin{proof}
Using Remark \ref{RamForm}, we compute 
\begin{align*}
\epsilon^{\ast}dK_{\widehat{X}}=& \epsilon^{\ast}(\psi^{\ast}dK_X + dE)  \\
=& \epsilon^{\ast}\psi^{\ast}dK_X + \epsilon^{\ast} dE \\
=& \epsilon^{\ast}\psi^{\ast}dK_X + \epsilon^{\ast} dE \\
=& \phi^{\ast} \pi^{\ast} dK_X + \epsilon^{\ast} dE \\
=& \phi^{\ast} (dK_Y - dR)+ \epsilon^{\ast} dE .
\end{align*}
Since the divisors  $\epsilon^{\ast}dK_{\widehat{X}}$ and $\phi^{\ast} (dK_Y - dR)$ are Cartier, the divisor 
$\epsilon^{\ast} dE$ is also Cartier and we obtain the isomorphism of line bundles 
\[
\sO_{\widetilde{Y}}(\epsilon^{\ast} dK_{\widehat{X}}) \cong
\sO_{\widetilde{Y}}(\phi^{\ast}(dK_Y-dR)) \otimes \sO_{\widetilde{Y}} (\epsilon^{\ast} dE)
\]
According to Proposition \ref{widetildQ}  $\widehat{X}$ is the quotient of $\widetilde{Y}$ by $G$. By 
Remark \ref{RamForm} 
\[
H^0\big(\widehat{X},\sO_{\widehat{X}}(dK_{\widehat{X}})\big) \simeq 
H^0\big(\widetilde{Y},\sO_{\widetilde{Y}}(\phi^{\ast}(dK_Y - dR)) \otimes\sO_{\widetilde{Y}}(\epsilon^{\ast} dE)\big)^G
\]
Using the projection formula:
\[
H^0\big(\widetilde{Y},\sO_{\widetilde{Y}}(\phi^{\ast}(dK_Y - dR)) \otimes \sO_{\widetilde{Y}}(\epsilon^{\ast} dE)\big)^G
=H^0\big(Y,\sO_{Y} (dK_Y - dR)\otimes \phi_{\ast}\sO_{\widetilde{Y}} (\epsilon^{\ast} dE)\big)^G.
\]
\end{proof}

Theorem \ref{mainthm} gives a method to compute the plurigenus $P_d(\widehat{X})$, if we can  determine 
the sheaf of ideals $\phi_{\ast}\mathcal O_{\widetilde{Y}} (\epsilon^{\ast} dE)$ and know a basis of $H^0\left(Y,\sO_{Y}(dK_Y)\right)$ explicitly. 
 In the next section we explain how  to compute these ideals, under the assumption that $X$ 
has only isolated cyclic quotient singularities. 

\section{The sheaves of ideals \texorpdfstring{$\sI_d$}{Id} for cyclic quotient singularities}\label{CyclicQuotients}

In this section we specialize to the case of a $G$-action, where the fixed locus 
of every automorphism $g \in G$ is isolated 
 and the stabilizer of each point $y \in Y$ is cyclic. 
Under this assumption, each singularity of $X=Y/G$ is an 
isolated cyclic quotient singularity 
\[
\frac{1}{m}(a_1,\ldots,a_n),
\]
i.e. locally in the analytic topology, 
around the singular point the variety $Y/G$  is isomorphic to  a quotient $\mathbb C^n/H$, where 
$H \simeq \mathbb Z_m$ is a cyclic group generated by a diagonal matrix 
\[
\diag\big(\xi^{a_1}, \ldots,\xi^{a_n}\big), \quad \makebox{where} \quad \xi:=
\exp\bigg(\frac{2\pi \sqrt{-1}}{m}\bigg) \quad \makebox{and} \quad \gcd(a_i,m)=1.
\]

\noindent 
In the sequel, we use toric geometry
to construct a resolution $\widehat{X}$ of the quotient $Y/G$ and give a local description of
the variety $\widetilde{Y}$ in Proposition \ref{UniqueSim}. 
We start by collecting some basics about cyclic quotient singularities from the toric point of view. 
For details we refer to \cite[Chapter 11]{CLS}.

\begin{remark}
\makebox{$$}
\begin{itemize}
\item
As an affine toric variety, the singularity $\frac{1}{m}(a_1,\ldots,a_n)$ is given by the lattice 
\[
N:=\mathbb Z^n + \frac{\mathbb Z}{m}(a_1,\ldots,a_n) \quad   \makebox{and the cone}  \quad 
\sigma:=\cone(e_1, \ldots, e_n),
\]
where the vectors $e_i$ are the euclidean unit vectors. We denote this affine toric variety by $U_{\sigma}$.
\item 
The inclusion $i \colon (\mathbb Z^n, \sigma) \to (N,\sigma)$ induces 
the quotient map 
\[
\pi \colon \mathbb C^n \to \mathbb C^n / \mathbb Z_m.
\]  
\item 
There exists a subdivision of the cone $\sigma$, yielding a fan $\Sigma$ such that the toric variety 
$\widehat{X_{\Sigma}}$ is smooth and the morphism $\psi \colon \widehat{X_{\Sigma}} \to U_{\sigma}$ induced by the identity map of the 
lattice $N$ is a resolution of  $U_{\sigma}$ i.e. birational and proper. 
\end{itemize}
\end{remark}

\noindent 
Now, the local construction of $\widetilde{Y}$ as a toric variety
is straightforward. Observe that 
the fan $\Sigma$ is also a fan in the lattice $\mathbb Z^n$. We define $\widetilde{Y_{\Sigma}}$ 
to be the toric variety associated to $(\mathbb Z^n, \Sigma)$. 
The commutative diagram 
\[
\begin{xy}
  \xymatrix{
     (\mathbb Z^n,\Sigma) \ar[rr] \ar[d] &  &  (N,\Sigma) \ar[d] \\   
		 (\mathbb Z^n,\sigma)  \ar[rr] &   &  (N,\sigma) 
		}
\end{xy}
\]
of inclusions induces a 
commutative diagram of toric morphisms, which is the local version of the diagram from Proposition \ref{UniqueSim}: 

\[
\begin{xy}
  \xymatrix{
     \widetilde{Y_{\Sigma}} \ar[rr]^{\epsilon}\ar[d]_{\phi} &  &  \widehat{X_{\Sigma}} \ar[d]^{\psi}  \\   
		 \mathbb C^n  \ar[rr]_{\pi} &   &  U_{\sigma} =\mathbb C^n/\mathbb Z_m
		}
\end{xy}
\]
Indeed, the following proposition holds:

\begin{proposition}
The map  $\epsilon \colon \widetilde{Y_{\Sigma}} \to \widehat{X_{\Sigma}}$ is finite and surjective and 
$\phi \colon   \widetilde{Y_{\Sigma}} \to \mathbb C^n$ is birational and proper. 
\end{proposition}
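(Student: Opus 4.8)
The plan is to exploit the fact that all four varieties in the diagram are toric and all four morphisms are the toric morphisms attached to the evident maps of fans, so that everything reduces to standard facts about toric morphisms (as collected in \cite{CLS}). I would handle $\phi$ and $\epsilon$ separately.

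For $\phi \colon \widetilde{Y_\Sigma} \to \mathbb C^n$ the underlying lattice map is the identity of $\mathbb Z^n$, and $\Sigma$ is by construction a subdivision of the single cone $\sigma$. First I would observe that $\phi$ is birational, since it restricts to the identity on the big torus $T_{\mathbb Z^n}$, which is a dense open subset of both $\widetilde{Y_\Sigma}$ and $\mathbb C^n$. Then properness follows from the standard criterion: a toric morphism $X_{\Sigma_1}\to X_{\Sigma_2}$ induced by a lattice map $\varphi$ is proper if and only if $\varphi_{\mathbb R}^{-1}(|\Sigma_2|)=|\Sigma_1|$; applied with $\varphi$ the identity and $\Sigma_2$ the fan of faces of $\sigma$, this reads $\sigma=|\Sigma|$, which holds precisely because $\Sigma$ subdivides $\sigma$. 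This part is essentially immediate.

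For $\epsilon \colon \widetilde{Y_\Sigma} \to \widehat{X_\Sigma}$ the lattice map is the inclusion $i\colon \mathbb Z^n \hookrightarrow N$, which has finite cokernel $N/\mathbb Z^n \cong \mathbb Z_m$, while the two fans consist of the very same cones in $\mathbb Z^n_{\mathbb R}=N_{\mathbb R}$. Dominance, hence surjectivity (the image being closed in an irreducible variety), is clear because on the big tori $\epsilon$ is the isogeny $T_{\mathbb Z^n}\to T_N$ dual to the finite-index inclusion of character lattices $M:=N^\vee \hookrightarrow M':=(\mathbb Z^n)^\vee$. For finiteness I would argue affine-locally on the chart of a cone $\tau\in\Sigma$: there $\epsilon$ is $\operatorname{Spec}$ of the inclusion of semigroup algebras $\mathbb C[\tau^\vee\cap M]\hookrightarrow \mathbb C[\tau^\vee\cap M']$, and since $mM'\subseteq M$ every monomial $\chi^u$ with $u\in\tau^\vee\cap M'$ satisfies $(\chi^u)^m=\chi^{mu}\in \mathbb C[\tau^\vee\cap M]$, so the extension is integral and module-finite; gluing over $\Sigma$ shows that $\epsilon$ is finite. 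Alternatively — and this is perhaps the cleanest route — one invokes the toric description of a quotient by a finite abelian subgroup of the torus: $\widehat{X_\Sigma}=\widetilde{Y_\Sigma}/(N/\mathbb Z^n)$ with $\epsilon$ the quotient map, which is automatically finite and surjective; this simultaneously recovers the local model of Proposition \ref{widetildQ}.

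I do not anticipate a genuine obstacle: each step is a textbook property of toric morphisms. The only point deserving a line of care is checking that the two maps of fans are compatible, i.e.\ that cones are sent into cones — transparent in both cases, for $\phi$ because the cones of $\Sigma$ lie inside $\sigma$, and for $\epsilon$ because the fan itself is unchanged and only the ambient lattice is enlarged.
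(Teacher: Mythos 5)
Your proposal is correct and takes essentially the same route as the paper: finiteness of $\epsilon$ is established affine-locally on each cone $\tau\in\Sigma$ by observing that every monomial $\chi^u$ in the larger semigroup algebra is integral over the smaller one because $mu$ lies in the smaller character lattice, and birationality and properness of $\phi$ follow from $\Sigma$ being a refinement of $\sigma$ (the paper simply cites \cite{CLS}*{Theorem 3.4.11}, whose support criterion you unpack). Your explicit remarks on surjectivity and the alternative description of $\epsilon$ as the quotient by $N/\ZZ^n$ are harmless additions rather than a different method.
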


\begin{proof}
We need to show that 
$\mathbb C[N^{\vee} \cap \tau^{\vee}] \subset \mathbb C[\mathbb Z^n \cap \tau^{\vee}]$ is a finite ring extension
for all cones $\tau$ in  $\Sigma$. 
Clearly, any element of the form $c \chi^q \in \mathbb C[\mathbb Z^n \cap \tau^{\vee}]$ is integral over 
$\mathbb C[N^{\vee} \cap \tau^{\vee}]$, because 
$mq \in N^{\vee} \cap \tau^{\vee}$ and 
$c \chi^q$ 
solves the monic equation $x^m-c^m\chi^{mq}=0$. The general case follows from the fact that any element in 
$\mathbb C[\mathbb Z^n \cap \tau^{\vee}]$
is a finite sum of elements of the form $c \chi^q$ and finite sums of integral elements are also integral. 
Since $\Sigma$ is a refinement of $\sigma$ the morphism $\phi$ is birational and proper according to  
\cite{CLS}*{Theorem 3.4.11}. 
\end{proof}

\noindent 
For the next step, we describe how to determine the discrepancy divisor in $\widehat{X}$ over each singular 
point of the quotient $Y/G$ and its pullback under the morphism $\epsilon$.

\begin{proposition}[{\cite{CLS}*{Proposition 6.2.7 and Lemma 11.4.10}}]\label{discrepancies}
\makebox{$$}
\begin{itemize}
\item
The exceptional prime divisors of the birational morphisms 
\[
\psi \colon  \widehat{X_{\Sigma}} \to U_{\sigma} \quad \quad \makebox{and} \quad  \quad 
\phi \colon \widetilde{Y_{\Sigma}} \to \mathbb C^n
\] 
are in one to one correspondence with the rays $\rho \in \Sigma \setminus \sigma$. 
\item
Write 
$v_{\rho} \in N$ for the primitive generator of the ray $\rho$ and 
$E_{\rho} \subset \widehat{X_{\Sigma}}$ for the corresponding prime divisor, then 
$K_{X_{\Sigma}} = \psi^{\ast} K_{U_{\sigma}} +  E$,   where
\[
E:=\sum_{\rho \in \Sigma \setminus \sigma} 
\big(\langle v_{\rho}, e_1+ \ldots + e_n \rangle-1\big)E_{\rho}. 
\]
\item 
Write $w_{\rho} \in \mathbb Z^n$ for the primitive generator of the ray $\rho$ and 
$F_{\rho} \subset \widetilde{Y_{\Sigma}}$ for the corresponding prime divisor, then 
\[
\epsilon^{\ast}E_{\rho} = \lambda_{\rho} F_{\rho} \qquad \makebox{where} \qquad
\lambda_{\rho} > 0 \qquad \makebox{such that} \qquad
w_{\rho} =\lambda_{\rho} v_{\rho}.
\]
In particular 
\[
\epsilon^{\ast} E =
\sum_{\rho \in \Sigma \setminus \sigma} 
\big(\langle w_{\rho}, e_1+ \ldots + e_n \rangle-1\big)F_{\rho}.
\]
\end{itemize}
\end{proposition}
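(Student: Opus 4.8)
The plan is to prove the three bullet points of Proposition~\ref{discrepancies} separately, using standard toric dictionary facts from \cite[Chapter~11]{CLS}, but being careful to track the two lattices $\mathbb Z^n$ and $N$ simultaneously, since that is the only point where the statement goes beyond a verbatim citation.

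For the first bullet, I would recall that for a proper birational toric morphism coming from a refinement of a fan, the exceptional prime divisors correspond bijectively to the rays of the refined fan that are not rays of the coarse fan; here both $\psi$ and $\phi$ come from the same fan $\Sigma$ refining the single cone $\sigma$, so in both cases the exceptional prime divisors are indexed by $\rho \in \Sigma \setminus \sigma$ (note $\sigma$ has no rays in its interior, being a smooth-generated cone, so ``rays of $\Sigma$ not equal to a ray of $\sigma$'' is the same as ``rays of $\Sigma$ in the interior of $\sigma$''). This is immediate from \cite[Proposition~11.4.10 and Theorem~3.4.11]{CLS}. For the discrepancy formula $K_{X_\Sigma} = \psi^*K_{U_\sigma} + E$, I would invoke the toric canonical divisor $K = -\sum_\rho D_\rho$ together with the standard computation of the discrepancy of a toric divisor: for the ray $\rho$ with primitive generator $v_\rho$ in the lattice $N$, the discrepancy equals $\langle v_\rho, m_\sigma\rangle - 1$, where $m_\sigma$ is the element of $M_{\mathbb R}$ cutting out $K_{U_\sigma}$ on $\sigma$; since $\sigma = \cone(e_1,\dots,e_n)$ is the standard simplicial cone, $m_\sigma = e_1^\vee + \dots + e_n^\vee$, i.e. $\langle v_\rho, m_\sigma\rangle = \langle v_\rho, e_1 + \dots + e_n\rangle$ with the obvious pairing. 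This is exactly \cite[Proposition~6.2.7 and Lemma~11.4.10]{CLS}; I would present it as a one-line invocation.

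The genuinely new content is the third bullet: relating $\epsilon^*E_\rho$ to $F_\rho$. The key observation is that a ray $\rho \in \Sigma\setminus\sigma$ has a primitive generator $v_\rho$ with respect to the lattice $N$ and a (generally different) primitive generator $w_\rho$ with respect to the sublattice $\mathbb Z^n \subset N$; since both generate the same ray, $w_\rho = \lambda_\rho v_\rho$ for a unique positive rational (in fact positive integer, as $w_\rho \in N$ too) $\lambda_\rho$. Under the toric morphism $\epsilon$ induced by the lattice inclusion $\mathbb Z^n \hookrightarrow N$ (identity on the fan $\Sigma$), the pullback of a torus-invariant prime divisor is governed by the multiplicity with which the sublattice ray-generator maps to the overlattice ray-generator: concretely, $\epsilon^* E_\rho = \lambda_\rho F_\rho$, which is the lattice-multiplicity formula for pullback of divisors under toric morphisms of the form ``same fan, finer lattice''. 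I would derive this either from the local description of $\epsilon$ on the affine chart $U_\rho$ (where $\epsilon$ is $\mathbb C[N^\vee \cap \tau^\vee] \subset \mathbb C[\mathbb Z^n \cap \tau^\vee]$ and one reads off the order of vanishing of the defining equation of $E_\rho$ along $F_\rho$), or by citing the standard formula for $\epsilon^*$ on toric divisors. Combining this with the first two bullets gives
\[
\epsilon^* E = \sum_{\rho \in \Sigma\setminus\sigma} \big(\langle v_\rho, e_1 + \dots + e_n\rangle - 1\big)\lambda_\rho F_\rho
= \sum_{\rho \in \Sigma\setminus\sigma} \big(\langle \lambda_\rho v_\rho, e_1 + \dots + e_n\rangle - \lambda_\rho\big)F_\rho,
\]
and the asserted formula $\epsilon^*E = \sum_\rho (\langle w_\rho, e_1+\dots+e_n\rangle - 1)F_\rho$ would then follow \emph{provided} $\lambda_\rho = 1$ for all $\rho$; indeed this holds because $\widehat{X_\Sigma}$ is chosen to be a \emph{smooth} resolution, so every ray $\rho \in \Sigma$ is part of a basis of $N$ and its primitive $N$-generator already lies in $\mathbb Z^n$ — wait, that last step needs care, and this is the main obstacle I anticipate.

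The main obstacle is precisely reconciling the two displayed expressions: naively $\langle w_\rho, \sum e_i\rangle - 1 = \lambda_\rho\langle v_\rho, \sum e_i\rangle - 1$, whereas multiplying the discrepancy $\langle v_\rho,\sum e_i\rangle - 1$ by $\lambda_\rho$ gives $\lambda_\rho\langle v_\rho,\sum e_i\rangle - \lambda_\rho$; these agree iff $\lambda_\rho = 1$. So the real work is to verify $\lambda_\rho = 1$, i.e. that the primitive generator of each interior ray of $\Sigma$ with respect to $N$ is already primitive with respect to $\mathbb Z^n$. I expect this to be true for the specific resolutions $\Sigma$ that one uses here — for a \emph{smooth} toric resolution, each ray generator is part of a $\mathbb Z$-basis of $N$, and one checks it actually lies in $\mathbb Z^n$ (for instance because in the standard chain of subdivisions for cyclic quotient singularities the new rays are explicit integer combinations of $e_1,\dots,e_n$); then $w_\rho = v_\rho$ and $\lambda_\rho = 1$. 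I would handle this by remarking that $\widehat{X_\Sigma}$ being smooth forces each $v_\rho$ into a basis of $N$, hence $v_\rho \in N$ is primitive in $N$, and — the point — that in the resolutions $\Sigma$ we construct the ray generators visibly lie in $\mathbb Z^n$, so $w_\rho = v_\rho$; consequently the two formulas coincide. (Alternatively, and more cleanly, one simply keeps $\lambda_\rho$ general and records $\epsilon^* E_\rho = \lambda_\rho F_\rho$ as the primary output, with the simplified ``$\langle w_\rho,\ldots\rangle - 1$'' form valid in the smooth-resolution case we actually use — which is what the downstream computation in the paper needs.)
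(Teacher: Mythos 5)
Your treatment of the first two bullets and of the primary identity $\epsilon^{\ast}E_{\rho}=\lambda_{\rho}F_{\rho}$ is correct and matches the paper, which offers no proof beyond the citation of \cite{CLS}*{Proposition 6.2.7 and Lemma 11.4.10}; the only genuinely new content is the comparison of the two lattices, which you set up correctly. The real problem is how you resolve the mismatch you rightly detect in the final displayed formula. Your proposed fix --- that $\lambda_{\rho}=1$ because $\widehat{X_{\Sigma}}$ is smooth --- is false. Smoothness of $\Sigma$ with respect to $N$ means each $v_{\rho}$ is part of a $\mathbb Z$-basis of $N$, hence primitive in $N$; it says nothing about membership in the sublattice $\mathbb Z^n$, and the new rays must typically have $v_{\rho}\notin\mathbb Z^n$, since otherwise the subdivision would not interact with the quotient structure at all. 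The paper's own key example makes this concrete: $\frac16(1,1,1)$ is resolved by the star subdivision at $v_{\rho}=\frac16(1,1,1)$, which is primitive in $N=\mathbb Z^3+\mathbb Z\frac16(1,1,1)$ and yields a smooth fan, yet $w_{\rho}=(1,1,1)=6v_{\rho}$, so $\lambda_{\rho}=6$.

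The correct conclusion is the one your own computation produces, namely
\[
\epsilon^{\ast}E=\sum_{\rho\in\Sigma\setminus\sigma}\lambda_{\rho}\big(\langle v_{\rho},e_1+\dots+e_n\rangle-1\big)F_{\rho}
=\sum_{\rho\in\Sigma\setminus\sigma}\big(\langle w_{\rho},e_1+\dots+e_n\rangle-\lambda_{\rho}\big)F_{\rho},
\]
i.e.\ the ``$-1$'' in the displayed ``in particular'' formula of the Proposition is a misprint for ``$-\lambda_{\rho}$''. Everything downstream in the paper confirms this: the remark after Proposition \ref{pushfor} takes $u_{\rho}=\lambda_{\rho}\big(\langle v_{\rho},e_1+\dots+e_n\rangle-1\big)$, the MAGMA code stores \texttt{lambda*d} with \texttt{d} the discrepancy, and Lemma \ref{Id=P3d} computes $\epsilon^{\ast}dE=-3dF_{\rho}$ for $\frac16(1,1,1)$, whereas the printed formula would give $+2d\,F_{\rho}$. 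So the step you flag as ``the main obstacle'' should not be overcome by proving $\lambda_{\rho}=1$ (it cannot be); you should instead prove the corrected identity above and note the misprint. Your closing parenthetical is nearly the right move, but its last clause --- that the simplified form is ``valid in the smooth-resolution case we actually use'' --- is still wrong for exactly the same reason.
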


\noindent 
It remains to determine the pushforward $\phi_{\ast}\mathcal O_{\widetilde{Y_{\Sigma}}}(\epsilon^{\ast} d E)$ for $d \geq 1$. 
We provide a recipe to compute $\phi_{\ast}\mathcal O_{\widetilde{Y_{\Sigma}}}(\epsilon^{\ast} D)$  for a general Weil divisor $D$ supported on the exceptional locus of
$\phi$. 

\begin{proposition}\label{pushfor}
Let $\phi \colon \widetilde{Y_{\Sigma}}\to \mathbb C^n$ be the birational morphism from above and 
\[
D= \sum_{\rho \in \Sigma \setminus \sigma}  u_{\rho} F_{\rho}, \quad \quad u_{\rho} \in \mathbb Z
\]
be a Weil divisor,  supported on the exceptional locus of $\phi$. 
For each integer $k \geq 1$, we define the sheaf of ideals 
$\mathcal I_{kD}:= \phi_{\ast} \mathcal O(kD)$, then:

\bigskip 
\begin{itemize}
\item[i)]
The ideal of global sections  $I_{kD} \subset \mathbb C[x_1, \ldots , x_n]$ is given by  
\[
I_{kD}=\bigoplus_{\alpha \in k P_D \cap \mathbb Z^n} \mathbb C \cdot \chi^{\alpha}, 
\]
 where $
P_D:=\lbrace u \in \mathbb R^n ~ | ~ u_i \geq 0, \quad \langle u, w_{\rho} \rangle \geq - u_{\rho}\rbrace$
is the polyhedron associated to $D$.

\bigskip
\item[ii)]
Let $l=(l_1,\ldots,l_n)$ be a tuple of positive integers such that $l_i \cdot e_i \in P_D$ and define 
\[
\Box_{l}:=\lbrace y \in \mathbb R^n ~ | ~ 0 \leq y_i \leq l_i \rbrace.
\]
Then, the set of monomials $\chi^{\alpha}$, where $\alpha$ is a lattice point in 
the polytope $k(\Box_{l} \cap P_D)$
generate $I_{kD}$. 
\end{itemize}
\end{proposition}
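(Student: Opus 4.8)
The plan is to translate everything into the combinatorics of the polyhedron $P_D$ and then invoke standard toric facts about pushforwards of divisorial sheaves under a proper birational toric morphism. First I would recall that, for the toric morphism $\phi \colon \widetilde{Y_{\Sigma}} \to \mathbb{C}^n$ induced by a refinement $\Sigma$ of $\sigma = \cone(e_1,\ldots,e_n)$, the sheaf $\mathcal{O}(kD)$ is $T$-equivariant, so $\phi_{\ast}\mathcal{O}(kD)$ is a torus-equivariant sheaf on $\mathbb{C}^n$, hence given by a monomial ideal; its module of global sections is the span of those $\chi^\alpha$ with $\alpha \in \mathbb{Z}^n$ lying in the support function polyhedron of $kD$. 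Concretely, a rational function $\chi^\alpha$ lies in $H^0(\widetilde{Y_\Sigma},\mathcal O(kD))$ precisely when $\mathrm{div}(\chi^\alpha) + kD \geq 0$ on $\widetilde{Y_\Sigma}$. The divisor $\mathrm{div}(\chi^\alpha)$ restricted to the torus-invariant prime divisors is: $\langle \alpha, e_i\rangle$ along the coordinate hyperplane $\{x_i = 0\}$ (the ray $e_i \in \sigma$) and $\langle \alpha, w_\rho\rangle$ along $F_\rho$ for $\rho \in \Sigma \setminus \sigma$. Nonnegativity of $\mathrm{div}(\chi^\alpha) + kD$ therefore reads $\langle \alpha, e_i\rangle \geq 0$ for all $i$ (note $D$ has no component along the coordinate hyperplanes, since it is supported on $\mathrm{Exc}(\phi)$) and $\langle \alpha, w_\rho\rangle \geq -k u_\rho$ for every $\rho \in \Sigma\setminus\sigma$. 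Dividing the second family by $k$, this is exactly the condition $\alpha/k \in P_D$, i.e. $\alpha \in kP_D \cap \mathbb{Z}^n$; this gives part i).

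For part ii) I would argue that the extra "box" cutoff $\Box_l$ does not shrink the ideal generated. The point is that if $\chi^\alpha \in I_{kD}$ and some coordinate $\alpha_i$ is large, one can factor $\chi^\alpha = x_i^{t}\cdot\chi^{\alpha - t e_i}$; I claim that after subtracting a suitable multiple of $e_i$ the exponent still lies in $kP_D$, so $\chi^\alpha$ lies in the ideal generated by a lattice point of $kP_D$ with smaller $i$-th coordinate. Indeed, $P_D + \mathbb{R}_{\geq 0}^n \subseteq P_D$: adding $e_i$ only increases $\langle u, w_\rho\rangle$ (because $w_\rho \in \sigma^\vee{}^\vee$… more precisely $w_\rho \in \mathbb{Z}^n$ is a primitive generator of a ray in the refinement of $\cone(e_1,\ldots,e_n)$, hence has nonnegative coordinates, so $\langle e_i, w_\rho\rangle \geq 0$) and preserves $u_i \geq 0$. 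Hence $kP_D$ is "upward closed", and the hypothesis $l_i e_i \in P_D$ guarantees that $k l_i e_i \in kP_D$, so every generator can be pushed down (using the $\mathbb{C}[x_1,\dots,x_n]$-module structure) into the region where $0 \le \alpha_i \le k l_i$ for each $i$, i.e. into $k(\Box_l \cap P_D)$. Conversely the lattice points of $k(\Box_l\cap P_D)$ are a subset of $kP_D\cap\mathbb Z^n$, so they generate a subideal; combining the two inclusions gives equality.

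The main obstacle I anticipate is making the reduction in part ii) fully rigorous as a statement about \emph{generators of an ideal} rather than just about monomials contained in it: one must check that repeatedly dividing by coordinate variables terminates and that the resulting "floor" monomials actually lie in the prescribed polytope $k(\Box_l \cap P_D)$ and not merely in $kP_D$ intersected with the box in some weaker sense — this requires knowing $P_D \cap \{u : u_i \le l_i\ \forall i\}$ is large enough to contain, for each $\alpha \in kP_D$, a lattice point $\alpha'$ with $\alpha - \alpha' \in \mathbb{Z}_{\geq 0}^n$ and $\alpha' \in k(\Box_l \cap P_D)$. This is where the precise hypothesis $l_i e_i \in P_D$ (rather than an arbitrary bound) is used: it ensures the segment from $0$ to $l_i e_i$ lies in $P_D$, so one can always reduce the $i$-th coordinate down into $[0, kl_i]$ while staying in $kP_D$. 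A clean way to package all of this is to first prove the auxiliary lemma that $P_D$ is a polyhedron of the form (bounded set) $+\ \mathbb{R}_{\geq 0}^n$ with recession cone exactly $\mathbb{R}_{\geq 0}^n$, and that the monomial ideal $I_{kD}$ is generated by the lattice points of $kP_D$ that are minimal in the componentwise partial order; these minimal points automatically satisfy $\alpha_i \le k l_i$ once $l_i e_i \in P_D$, and that finishes the proof. The remaining verifications (that $\chi^\alpha$ generating as a $\mathbb{C}[x]$-module corresponds to the sheaf statement via $\phi_\ast$, and that $kP_D$ computes the sections) are routine toric bookkeeping once part i) is in place.
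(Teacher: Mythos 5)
Your part i) is essentially the paper's proof: both reduce the computation of $I_{kD}=H^0(\widetilde{Y_\Sigma},\mathcal O(kD))$ to the standard toric description of sections of a torus-invariant divisor (the paper simply cites Proposition 4.3.3 of Cox--Little--Schenck, you rederive it from $\operatorname{div}(\chi^\alpha)+kD\ge 0$ ray by ray), together with the remarks that $D$ has no components along the coordinate hyperplanes and that the conditions $\alpha_i\ge 0$ place $\chi^\alpha$ inside $\mathbb C[x_1,\dots,x_n]$. For part ii) you take a genuinely different and somewhat heavier route. The paper's argument is a one-step factorization: if $\alpha\in P_{kD}\cap\mathbb Z^n$ has, say, $\alpha_1>kl_1$, write $\chi^\alpha=\chi^{(\alpha_1-kl_1,\alpha_2,\dots,\alpha_n)}\cdot\chi^{kl_1e_1}$; the second factor is itself one of the proposed generators (since $l_1e_1\in P_D$ gives $kl_1e_1\in k(\Box_l\cap P_D)$) and the first factor only needs to be a monomial in $\mathbb C[x_1,\dots,x_n]$, so no iteration and no question of ``staying inside $kP_D$'' ever arises. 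You instead push $\alpha$ itself down into the box, which forces you to confront exactly the issue you flag; your final packaging (the ideal is generated by the componentwise-minimal lattice points of $kP_D$, and these satisfy $\alpha_i\le kl_i$) is correct, but one intermediate justification is wrong: the segment from $0$ to $l_ie_i$ does \emph{not} lie in $P_D$ in the cases of interest, since $0\in P_D$ forces all $u_\rho\ge 0$, whereas the whole construction is aimed at divisors with some $u_\rho<0$. What actually makes your reduction work is that replacing $\alpha_i$ by $kl_i$ keeps you in $P_{kD}$, because $\langle \cdot,w_\rho\rangle$ evaluated at that truncated point is at least $kl_i(w_\rho)_i\ge -ku_\rho$ (the $w_\rho$ have nonnegative entries and $kl_ie_i\in P_{kD}$). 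In exchange for the extra work your approach yields the slightly stronger conclusion that the unique minimal monomial basis already lies in $k(\Box_l\cap P_D)$; the paper's factorization buys brevity.
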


\begin{proof}
i) By definition of the pushforward and the surjectivity of $\phi$, we have
\[
I_{kD}= \phi_{\ast} \sO_{\widetilde{Y_{\Sigma}}}(kD)(\mathbb C^n) = H^0(\widetilde{Y_{\Sigma}},\sO_{\widetilde{Y_{\Sigma}}}(kD)).
\]
According to \cite{CLS}*{Proposition 4.3.3}, it holds
\[
H^0(\widetilde{Y_{\Sigma}},\mathcal O(kD))= 
\bigoplus_{\alpha \in P_{kD} \cap \mathbb Z^n} \mathbb C \cdot \chi^{\alpha}
\]
and the claim follows since $k P_D=P_{kD}$. Note that the inequalities $u_i \geq 0$ imply 
\[
\chi^{\alpha} \in \mathbb C[x_1, \ldots, x_n] \quad \quad \makebox{for all}\quad \quad  
\alpha \in P_{kD} \cap \mathbb Z^n. 
\]
ii) Let $\chi^{\alpha}$ be a monomial, such that the exponent 
 $\alpha=(\alpha_1, \ldots, \alpha_n) \in P_{kD} \cap \mathbb Z^n$ is not contained in the polytope
\[
k(\Box_{l} \cap P_D)= \Box_{kl} \cap P_{kD}, 
\]
say $kl_1 < \alpha_1$. Then we define  $\beta_1:=\alpha_1 - kl_1$ and write $\chi^{\alpha}$ as a product 
\[
\chi^{\alpha}= \chi^{(\beta_1,\alpha_2,\ldots,\alpha_n)} \chi^{kl_1e_1}.
\]
\end{proof}

\begin{remark}
\makebox{$$}
\begin{itemize}
\item
Note that the inequalities $\langle u, w_{\rho} \rangle \geq - ku_{\rho}$ in the definition of the polyhedron 
\[
P_{kD}=\lbrace u \in \mathbb R^n ~ | ~ u_i \geq 0, \quad \langle u, w_{\rho} \rangle \geq - ku_{\rho}\rbrace
\]
are redundant if $u_{\rho} \geq 0$. 
\item 
For $D=\epsilon^{\ast}E$ we have 
$u_{\rho}=  \lambda_{\rho} \big(\langle v_{\rho}, e_1+ \ldots + e_n \rangle-1\big)$.
This integer is, according to Proposition \ref{discrepancies}, equal 
to the discrepancy of $E_{\rho}$ multiplied by $\lambda_{\rho}>0$. In particular, in the case of a canonical singularity, the ideal $\mathcal I_{\epsilon^{\ast}kE}$ is trivial, since
all $u_{\rho} \geq 0$. 
\item
The ideal $I_{kD}$ has a unique minimal basis, because it is a monomial ideal. 
\end{itemize}
\end{remark}

\begin{remark}
If we perform the star subdivision of the cone $\sigma$  along 
all rays generated by a primitive lattice point $v_{\rho}$ with 
\[
\langle v_{\rho}, e_1+ \ldots + e_n \rangle-1 < 0 
\]
we obtain a fan $\Sigma'$ 
that is not necessarily smooth. 
However, there is a subdivision of  $\Sigma'$ yielding a smooth fan $\Sigma$. Since the new rays 
$\rho \in \Sigma \setminus \Sigma'$ do not contribute to the polyhedra of 
$\epsilon^{\ast}kE$, there is no need to compute $\Sigma$ explicitly.  
\end{remark}

\noindent
From the description of the ideal $I_{kD}$, it follows that 
$(I_D)^k \subset I_{k D}$ for all positive integers $k$. 
However,  this inclusion is in general not an equality. The reason is that the
polytope $\Box_{l} \cap P_D$ may not contain enough lattice points. 
We can solve this problem by replacing $D$ with a high enough multiple:

\begin{proposition}\label{IsD^k=IskD}
Let $D$ be a divisor as in Proposition \ref{pushfor}. 
Then, there exists a positive integer $s$ such that 
\[
(I_{s D})^k=I_{sk D}  \quad \quad \makebox{for all} \quad \quad k \geq 1. 
\]
\end{proposition}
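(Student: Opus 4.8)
The plan is to reduce the statement to a purely combinatorial fact about the polyhedra $P_{kD}$ introduced in Proposition \ref{pushfor}. Recall from that proposition that $I_{kD}$ is the monomial ideal spanned by $\chi^{\alpha}$ with $\alpha \in kP_D \cap \mathbb{Z}^n$, and that $kP_D = P_{kD}$. Thus the claim $(I_{sD})^k = I_{skD}$ is equivalent to the assertion that every lattice point of $skP_D$ is a sum of $k$ lattice points of $sP_D$; that is, that the semigroup $S := \bigsqcup_{j \geq 0} (jsP_D \cap \mathbb{Z}^n)$, graded by $j$, is generated in degree one for a suitable $s$. First I would record that $P_D$ is a rational polyhedron of full dimension $n$ in $\mathbb{R}^n_{\geq 0}$: it is cut out by the coordinate inequalities $u_i \geq 0$ together with finitely many inequalities $\langle u, w_\rho \rangle \geq -u_\rho$, and it is nonempty (it contains the tuple $l$ from part ii) of Proposition \ref{pushfor}, and with it the whole positive orthant shifted into $P_D$), hence full-dimensional.

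Next I would set up the homogenization. Consider the cone $C := \overline{\mathbb{R}_{\geq 0}\cdot (P_D \times \{1\})} \subset \mathbb{R}^{n+1}$, a rational polyhedral cone, and the affine semigroup $\mathbb{Z}^{n+1} \cap C$. Its degree-$j$ piece (last coordinate $j$) is exactly $jP_D \cap \mathbb{Z}^n$. By Gordan's lemma this semigroup is finitely generated; let the generators sit in degrees $\leq d_0$. The standard Veronese trick then applies: for $s := \operatorname{lcm}(1,2,\dots,d_0)$ — or more simply any multiple of all the generator degrees, e.g. $s = d_0!$ — the $s$-th Veronese subsemigroup $\bigsqcup_{j\geq 0}(jsP_D \cap \mathbb{Z}^n)$ is generated in degree one. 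Concretely, any $\alpha \in jsP_D \cap \mathbb{Z}^n$ is a nonnegative integer combination $\sum_i c_i \gamma_i$ of the generators $\gamma_i$ (of degree $e_i \leq d_0$) with $\sum_i c_i e_i = js$; since $e_i \mid s$, one can group the $\gamma_i$'s into $j$ blocks each of total degree exactly $s$, expressing $\alpha$ as a sum of $j$ lattice points of $sP_D$. This is the routine part and I would only sketch the block-grouping argument.

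Translating back through Proposition \ref{pushfor}: a lattice point of $skP_D = P_{skD}$ being a sum of $k$ lattice points of $sP_D = P_{sD}$ says precisely that $\chi^\alpha \in (I_{sD})^k$, giving $I_{skD} \subseteq (I_{sD})^k$; the reverse inclusion $(I_{sD})^k \subseteq I_{skD}$ is the general inclusion already noted in the text (it follows from subadditivity $P_{sD}+\cdots+P_{sD} \subseteq P_{skD}$, i.e. $k\cdot sP_D \subseteq ksP_D$, which is trivial). Hence $(I_{sD})^k = I_{skD}$ for all $k \geq 1$, as desired.

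The only genuine subtlety — the ``hard part'' — is making sure the homogenization cone $C$ is pointed (so that Gordan's lemma yields a finite generating set with a well-defined notion of ``degree-one generators'', and so the Veronese argument has no infinite-descent pathology): this holds because the last-coordinate functional is strictly positive on $C \setminus \{0\}$, which in turn uses that $P_D$ is a genuine polyhedron containing no line through the origin in its recession cone's span — automatic here since $P_D \subseteq \mathbb{R}^n_{\geq 0}$ forces its recession cone into the pointed orthant. Once pointedness is in hand, everything else is the standard Veronese/Gordan package, so I would state that package cleanly and apply it rather than reprove it.
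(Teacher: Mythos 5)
Your setup is sound up to a point: the reduction to showing that every lattice point of $skP_D$ is a sum of $k$ lattice points of $sP_D$, the homogenization to a pointed cone, and the appeal to Gordan's lemma are all fine (pointedness does hold, for the reason you give). The genuine gap is exactly in the step you dismiss as routine. The assertion that a multiset of generators whose degrees $e_i$ all divide $s$ and whose total degree is $js$ can always be grouped into $j$ blocks of total degree exactly $s$ is \emph{false}. Take $s=30$ and degrees $15,10,10,6,6,6,6,1$: each divides $30$ and the total is $60=2\cdot 30$, yet no sub-multiset sums to $30$ (the equation $15a+10b+6c+d=30$ has no solution with $0\le a\le 1$, $0\le b\le 2$, $0\le c\le 4$, $0\le d\le 1$), so no block decomposition exists. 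Hence ``any multiple of all the generator degrees'' is not a valid choice of $s$: an element of degree $js$ of a graded affine semigroup need not lie in the $j$-fold sumset of the degree-$s$ part even when every generator degree divides $s$. Since this is where the entire content of the proposition resides, the proposal as written does not prove the statement; whether your particular $s=\operatorname{lcm}(1,\dots,d_0)$ happens to work would itself require a new argument, not the block-grouping.

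The paper avoids this trap by staying with bounded polytopes rather than semigroup generators: by Proposition \ref{pushfor} ii) the ideal $I_{kD}$ is already generated by the lattice points of $k(\Box_l\cap P_D)$; one scales by $s'$ to make $s'(\Box_l\cap P_D)$ a lattice polytope and then by $n-1$, so that $s(\Box_l\cap P_D)$ with $s=s'(n-1)$ is a \emph{normal} lattice polytope by \cite{CLS}*{Theorem 2.2.12} --- normality being precisely the statement that every lattice point of the $k$-th dilate is a sum of $k$ lattice points. That theorem (or an equivalent statement about Veronese subalgebras of finitely generated graded rings, where $s$ must be both sufficiently divisible \emph{and} sufficiently large in the sense furnished by such a result) is the nontrivial input your proof is missing; to repair your route you would have to invoke it in place of the lcm trick.
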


\begin{proof}
Let $l=(l_1,\ldots,l_n)$ be a tuple of positive integers such that $l_i \cdot e_i \in P_D$. According to  
Proposition \ref{pushfor} ii) the monomials $\chi^{\alpha}$ with
\[
\alpha \in k\big(\Box_{l} \cap P_D\big) \cap \mathbb Z^n
\]
generate $I_{kD}$ for all $k \geq 1$. Since the vertices of the polytope $\Box_{l} \cap P_D$ 
have rational coordinates, there is a positive integer $s'$ such that $s'\big(\Box_{l} \cap P_D\big)$ 
is a lattice polytope i.e. the convex hull of finitely many lattice points.
We define $s:=s'(n-1)$,  then $s\big(\Box_{l} \cap P_D\big)$ is a normal lattice polytope 
(see \cite{CLS}*{Theorem 2.2.12}), which means that 
\[
\big(ks'\big(\Box_{l} \cap P_D\big)\big) \cap \mathbb Z^n = k \big(s'\big(\Box_{l} \cap P_D\big) \cap \mathbb Z^n\big)
\quad \quad \makebox{for all } \quad \quad k \geq 1.
\]
Clearly, this implies $(I_{s D})^k=I_{sk D}$ for all $k \geq 1$. 
\end{proof}

\begin{remark}
According to the proof of Proposition \ref{IsD^k=IskD} we can take $s=(n-1)s'$ where $s'$ is the smallest positive integer such 
that all the vertices of $s'P_D$ have integral coordinates.
\end{remark}

{\scriptsize
%\begin{verbatim}
\begin{codice_magma}[caption={Computation of the ideal $I_{k \epsilon^{\ast} E}$ for the singularity $1/n(1,a,b)$\\ }]

// The first function determines the lattice points that we need to blow up according 
// to Computational Rem 5.6. It returns the primitive generators "w_rho" of these points
// according to Prop 5.3 and the discrepancy of the pullback divisor eps^{\ast} E.

Vectors:=function(n,a,b)
Ve:={};
for i in [1..n-1] do
	x:=i/n;
	y:=(i*a mod n)/n;
	z:=(i*b mod n)/n;
	d:=x+y+z-1;
		if d lt 0 then 
            lambda:=Lcm([Denominator(x),Denominator(y),Denominator(z)]);
            Include(~Ve,[lambda*x,lambda*y,lambda*z,lambda*d]);
		end if;
    end for;
    return Ve;
end function;

// The function "IntPointsPoly" determines a basis for the monomial ideal,  
// according to Proposition 5.4. However, this basis is not necessarily minimal. 
// The subfunction "MinMultPoint" is used to determine the cube in ii) of Proposition 5.4. 

MinMultPoint:=function(P,v)
    n:=1;
	while n*v notin P do 
	    n:= n+1;
	end while;
    return n;
end function;

IntPointsPoly:=function(n,a,b,k)
    L:=ToricLattice(3);
    La:=Dual(L);
    e1:=L![1,0,0]; e2:=L![0,1,0]; e3:=L![0,0,1]; 
    P:=HalfspaceToPolyhedron(e1,0) meet 
       HalfspaceToPolyhedron(e2,0) meet 
       HalfspaceToPolyhedron(e3,0);
    Vec:=Vectors(n,a,b);
    for T in Vec do 
	    w:=L![T[1],T[2],T[3]];
	    u:=T[4];
	    P:= P meet HalfspaceToPolyhedron(w,-k*u); 
    end for;
    multx:=MinMultPoint(P,La![1,0,0]); 	
    multy:=MinMultPoint(P,La![0,1,0]); 
    multz:=MinMultPoint(P,La![0,0,1]); 
    P:=P meet HalfspaceToPolyhedron(L![-1,0,0],-k*multx) meet 
		      HalfspaceToPolyhedron(L![0,-1,0],-k*multy) meet 
		      HalfspaceToPolyhedron(L![0,0,-1],-k*multz); 
    return Points(P);		 
end function;

// The next functions are used to find the (unique) minimal monomial basis of the ideal.

IsMinimal:=function(Gens)
    test:=true; a:=0;
	for a_i in Gens do
		for a_j in Gens do
	    	if a_i ne a_j then 
			    d:=a_j-a_i;
                if d.1 ge 0 and d.2 ge 0 and d.3 ge 0 then 
					a:=a_i; test:=false;
					break a_i;
				end if;
			end if;
		end for;
	end for;
    return test, a;
end function;

SmallerGen:=function(Gens,a)	
    Set:=Gens;	
    for b in Gens do
		d:=b-a;
		if d.1 ge 0 and d.2 ge 0 and d.3 ge 0 and b ne a then 
			Exclude(~Set,b);
		end if;
	end for;
    return Set;
end function;

MinBase:=function(n,a,b,k)
    F:=RationalField();
    PL<x1,x2,x3>:=PolynomialRing(F,3);
    test:=false;
    Gens:=IntPointsPoly(n,a,b,k);
    while test eq false do
	    test, a:=IsMinimal(Gens);
		if test eq false then 
			Gens:=SmallerGen(Gens,a);
		end if;
    end while;
    MB:={};
    for g in Gens do 
        Include(~MB,PL.1^g.1*PL.2^g.2*PL.3^g.3);
    end for;
    return MB;
end function;
\end{codice_magma}
}

\section{A Calabi-Yau 3-fold}\label{trueCY}

In this section we apply Theorem \ref{mainthm} to the first numerical Calabi-Yau threefold listed in Section \ref{list}, table \ref{figure CY}.

We start by giving an explicit description of the threefold by writing the canonical ring of the curve $C:=C_1 \cong C_2 \cong C_3$ and the group action on it.

We consider the hyperelliptic curve 
\[
C:=\lbrace y^2=x_0^{6} + x_1^{6} \rbrace \subset \mathbb P(1,1,3)
\]
of genus $2$, 
together with the  $\ZZ_6$-action generated by the automorphism $g$ defined by
\[
g((x_0:x_1:y))=(x_0:\omega x_1:y), \quad \quad \makebox{where} \quad \quad \omega:=e^{\frac{2\pi i}{6}}.
\]
By adjunction there is an isomorphism of graded rings between $R(C,K_C):=\bigoplus_{d} H^0(C,\sO_C(dK_C)) $ and $\CC[x_0,x_1,y]/(y^2-x_0^6-x_1^6))$, where $\deg x_i=1$ and $\deg y=3$. 

\begin{lemma}\label{actiononR}
The action of $g$ on $R(C,K_C)$ induced by the pull-back of holomorphic differential forms is
\begin{align*}
x_0 \mapsto &\omega x_0&
x_1 \mapsto &\omega^2 x_1&
y \mapsto &\omega^3 y=-y\\
\end{align*}
\end{lemma}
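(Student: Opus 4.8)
The plan is to use the adjunction isomorphism $R(C,K_C) \cong \CC[x_0,x_1,y]/(y^2-x_0^6-x_1^6)$ and to track explicitly how $g$ acts on each generator via pull-back of differential forms. The key point is that this isomorphism is canonical in the sense that it is $\Aut(C)$-equivariant, so it suffices to compute the action of $g^\ast$ on the relevant spaces of sections and match them up with the monomials $x_0, x_1, y$ in the correct graded pieces.

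First I would identify $H^0(C,\omega_C)$, the degree-$1$ piece of $R(C,K_C)$, which is $2$-dimensional and is spanned (via adjunction on the hyperelliptic model) by the forms $\frac{x_0\,dx_1 - x_1\,dx_0}{y}$ restricted appropriately — equivalently, in affine coordinates where $x_0=1$, by $\frac{dx_1}{y}$ and $\frac{x_1\,dx_1}{y}$. Since $g$ acts by $x_1 \mapsto \omega x_1$, $y \mapsto \pm y$ (the sign on $y$ being forced by the equation $y^2 = 1 + x_1^6$, which is $g$-invariant, so $g$ fixes $y^2$ and hence $g(y) = \pm y$), one computes $g^\ast\!\left(\frac{dx_1}{y}\right) = \frac{\omega\,dx_1}{g(y)}$ and $g^\ast\!\left(\frac{x_1\,dx_1}{y}\right) = \frac{\omega^2 x_1\,dx_1}{g(y)}$. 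Under adjunction these two forms correspond to $x_0$ and $x_1$ respectively (in the degree-$1$ graded piece), and one determines the sign of $g(y)$ by requiring compatibility with the relation in degree $6$: $g(y)^2 = g(x_0^6 + x_1^6) = x_0^6 + \omega^{12} x_1^6 = x_0^6 + x_1^6 = y^2$, and then pinning down $g(y) = -y = \omega^3 y$ from the action on $H^0(C, \omega_C^{\otimes 3})$, whose dimension is $5$ with basis $x_0^3, x_0^2 x_1, x_0 x_1^2, x_1^3, y$; the eigenvalue of $g$ on the one-dimensional subspace not spanned by cubic monomials in $x_0,x_1$ must be the remaining one, namely $\omega^3$.

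More precisely, the cleanest route is: establish $g^\ast x_0 = \omega x_0$ and $g^\ast x_1 = \omega^2 x_1$ directly from the action on $H^0(\omega_C)$ as above (noting that the labelling of which basis form is $x_0$ versus $x_1$ is a choice, fixed once and for all), then observe that $g^\ast y$ lies in the degree-$3$ piece and is an eigenvector; since $g^\ast$ preserves the subring generated by $x_0, x_1$ and acts on the degree-$3$ monomials $x_0^3, x_0^2x_1, x_0x_1^2, x_1^3$ with eigenvalues $\omega^3, \omega^4, \omega^5, \omega^6$, and since the five-dimensional space $H^0(\omega_C^{\otimes 3})$ decomposes into these four plus the line $\CC y$, the eigenvalue on $\CC y$ is the unique character of $\ZZ_6$ on $H^0(\omega_C^{\otimes 3})$ not already accounted for. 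Comparing with the Chevalley–Weil computation of the $\ZZ_6$-representation on $H^0(\omega_C^{\otimes 3})$ (or simply using $g^\ast(y^2) = y^2$ together with the fact that $y$ is not a polynomial in $x_0, x_1$, forcing $g^\ast y = \pm y$, and then the $\pm$ is resolved by the eigenvalue bookkeeping) gives $g^\ast y = \omega^3 y = -y$.

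The main obstacle is purely bookkeeping: making sure the adjunction isomorphism is applied with the correct grading conventions ($\deg x_i = 1$, $\deg y = 3$) and that the pull-back of forms — which is contravariant — is translated correctly into an action on the ring of sections, so that no spurious inverse or sign creeps in. There is also the minor subtlety that the identification of the two generators of $H^0(\omega_C)$ with the symbols $x_0$ and $x_1$ is a convention; one must simply fix it consistently with the equation $y^2 = x_0^6 + x_1^6$ and the stated action $g(x_1) = \omega x_1$ on the curve. Once these conventions are nailed down, the computation is immediate and the eigenvalues $\omega, \omega^2, \omega^3$ fall out.
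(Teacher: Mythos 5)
Your computation of the degree-one piece is fine, and your overall strategy (pass through the adjunction isomorphism and compute on forms in the affine chart $x_0\neq 0$) is the same as the paper's. The gap is in the only genuinely delicate point of the lemma, namely the sign on $y$. First, there is no ambiguity ``$g(y)=\pm y$'' to resolve on the curve itself: the action is given explicitly as $g((x_0:x_1:y))=(x_0:\omega x_1:y)$, so the weighted coordinate $y$ (equivalently $v=y/x_0^3$ in the chart) is \emph{fixed} by $g$. The real question is why the ring element $y\in R(C,K_C)$ of degree $3$ is nevertheless anti-invariant, and your argument for that does not work. You claim the eigenvalue on $\CC y$ is ``the unique character of $\ZZ_6$ on $H^0(\omega_C^{\otimes 3})$ not already accounted for'' by the cubic monomials; but $x_0^3,x_0^2x_1,x_0x_1^2,x_1^3$ carry the characters $\omega^3,\omega^4,\omega^5,1$, so the characters not yet appearing are $\omega$ and $\omega^2$ --- two of them, and neither is the correct answer $\omega^3$, which in fact occurs with multiplicity two in $H^0(3K_C)$ (once on $x_0^3$ and once on $y$). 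So the ``remaining eigenvalue'' bookkeeping is both non-unique and, taken literally, gives a wrong value. The Chevalley--Weil comparison you mention would salvage this, but you never carry it out.

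The direct fix, which is what the paper does, is to write down the form that $y$ corresponds to under adjunction: in the chart $x_0\neq 0$, with $u=x_1/x_0$, $v=y/x_0^3$ and $C=\{v^2=u^6+1\}$, the generators $x_0,x_1,y$ correspond to $\frac{du}{2v}$, $u\,\frac{du}{2v}$ and $v\left(\frac{du}{2v}\right)^{\otimes 3}$ respectively. Since $g$ acts on the chart by $(u,v)\mapsto(\omega u,v)$, pull-back multiplies the last form by $\omega^3=-1$: the minus sign comes from the tensor cube of $\frac{du}{2v}$, not from any action on $v$. Your fallback observation that $(g^{\ast}y)^2=y^2$ forces $g^{\ast}y=\pm y$ is correct, but it leaves exactly the sign undetermined, and that sign is the content of the lemma.
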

\begin{proof}
Consider the smooth affine chart $x_0 \neq 0$ with local coordinates $u:=\frac{x_1}{x_0}$ and  $v:=\frac{y}{x_0^3}$. In this chart $C$ is the vanishing locus of $f:=v^2-u^6-1$. By adjunction the monomials $x_0,x_1,y \in R(C,K_C)$ 
correspond respectively to the forms that, in this chart, are
\begin{align*}
x_0 \mapsto&\frac{du}{\frac{\partial f}{\partial v}}= \frac{du}{2v} &
x_1 \mapsto &u \frac{du}{2v}& 
y \mapsto & v \left (\frac{du}{2v}\right)^{\otimes 3}& 
\end{align*}
The statement follows since $g$ acts on the local coordinates as $(u,v) \mapsto (\omega u, v)$.
\end{proof}
\begin{proposition}\label{firstex}
The threefold $X:=C^3/\ZZ_6$, where the group $\ZZ_6$ acts as above on each copy of $C$, is a numerical Calabi-Yau threefold. 

There are $8$ non canonical singularities on $X$, all of type
$ \frac{1}{6}(1,1,1)$.
\end{proposition}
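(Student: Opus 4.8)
The plan is to verify the three asserted properties of $X = C^3/\ZZ_6$ directly from the group action described in Lemma \ref{actiononR}, together with the general machinery recalled in Sections \ref{list} and \ref{CyclicQuotients}.

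\textbf{Step 1: the invariants $p_g$, $q_1$, $q_2$.} First I would compute the Hodge numbers of a resolution via the Chevalley--Weil formula, exactly as indicated in the ``2nd Step'' of Section \ref{list}. Since $C$ has genus $2$, the space $H^0(C,\Omega^1_C)$ is two-dimensional, spanned (under the adjunction identification) by the forms corresponding to $x_0$ and $x_1$; by Lemma \ref{actiononR} the generator $g$ acts on these with eigenvalues $\omega$ and $\omega^2$. Hence on $H^0(\Omega^1_{C^3}) \cong \bigoplus_{i=1}^3 H^0(\Omega^1_C)$ the eigenvalues of $g$ are $\omega,\omega^2$ (three copies), and $q_1 = h^0(\Omega^1_{C^3})^{\ZZ_6} = 0$. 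For $q_2$, one uses $H^0(\Omega^2_{C^3}) = \bigoplus_{i<j} H^0(\Omega^1_{C_i}) \boxtimes H^0(\Omega^1_{C_j})$, on which $g$ acts with eigenvalues from the set $\{\omega^2,\omega^3,\omega^4\}$ (products of pairs of eigenvalues from $\{\omega,\omega^2\}$); none of these is trivial, so $q_2 = 0$. Finally $H^0(\Omega^3_{C^3}) = H^0(\Omega^1_{C_1})\boxtimes H^0(\Omega^1_{C_2})\boxtimes H^0(\Omega^1_{C_3})$ is eight-dimensional with $g$ acting by the eight products $\omega^{a_1+a_2+a_3}$, $a_i\in\{1,2\}$; the trivial character occurs exactly when $a_1+a_2+a_3 \equiv_6 0$, i.e. never (the sum ranges over $3,4,5,6$... ) — wait: $a_i \in \{1,2\}$ gives sums $3,4,5,6$ with the value $6$ attained only by $(2,2,2)$, so $p_g = 1$. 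These are invariants of the resolution because the quotient singularities are rational, so $p_g(\widehat X)=p_g(X)=h^0(\Omega^3_{C^3})^{\ZZ_6}$ and similarly for the $q_i$. This establishes that $\widehat X$, hence $X$, is a numerical Calabi-Yau threefold.

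\textbf{Step 2: the singular locus.} The singularities of $X$ come from the fixed loci of the nontrivial elements $g,g^2,g^3,g^4,g^5 \in \ZZ_6$ acting diagonally on $C^3$, following \cite{BaPi12}*{Proposition 1.17}. The automorphism $g$ of $C = \{y^2 = x_0^6+x_1^6\}$ has fixed points where $\omega x_1 = x_1$ up to the scaling, i.e. $x_1 = 0$ (two points, since $y^2 = x_0^6$ gives $(1:0:\pm1)$) and $x_0 = 0$ (one point $(0:1:0)$, as $y^2 = x_1^6$ has $y/x_1^3 = \pm 1$ identified — actually two points, need care). At a fixed point $P$ the element $g$ acts on the tangent line $T_PC$ by a root of unity determined by the local coordinates in Lemma \ref{actiononR}: near $(1:0:\pm1)$ the coordinate $u=x_1/x_0$ is a uniformizer and $g$ acts by $u\mapsto\omega u$, so the weight is $\omega$. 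A point of $C^3$ fixed by $g$ is a triple of $g$-fixed points, and the stabilizer weight is $(\omega^{c_1},\omega^{c_2},\omega^{c_3})$ with each $c_i\in\{1,5\}$ (the two possible tangent weights at the two types of fixed points). One runs through all fixed points of all nontrivial powers $g^k$, discards those whose quotient singularity is canonical (the age criterion), and collects the rest. I expect the bookkeeping to show that the only noncanonical contributions come from $g$ and $g^5$ acting with tangent weight $\omega$ at each of three coordinates — giving type $\frac16(1,1,1)$ — and that there are $8$ such points: choosing, at each of the three factors, the fixed point of $C$ with tangent weight making the triple of type $(1,1,1)$, there are $2^3 = 8$ choices (two fixed points of $C$ with weight $\omega$ in each factor). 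This matches row $1$ of Table \ref{figure CY}, whose $\mathcal S_{nc}$ entry is $\frac{(1,1)}{6}^8$.

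\textbf{Main obstacle.} Steps 1 and 2 are both essentially bookkeeping, but Step 2 is the delicate one: I must carefully enumerate the fixed points of each $g^k$ on the hyperelliptic curve $C$ (keeping track of the two Weierstrass-type behaviours — the points with $x_1=0$, the points with $x_0=0$, and for $k=3$ the six fixed points of the hyperelliptic involution $g^3$), compute the correct local tangent weight at each using the chart computation of Lemma \ref{actiononR} (including the chart $x_1\neq 0$ for the points over $x_0=0$), normalize each triple $\frac1m(c_1,c_2,c_3)$ to the form $\frac1m(1,a,b)$ with $\gcd(a_i,m)=1$, and apply the Reid--Tai/age criterion to separate canonical from noncanonical. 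The claim that precisely $8$ noncanonical points survive, all of type $\frac16(1,1,1)$, is exactly what this enumeration must yield; everything else ($p_g=1$, $q_1=q_2=0$) is comparatively immediate from the eigenvalue count above.
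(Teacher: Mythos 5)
Your Step 1 is correct and is essentially the paper's own argument: the paper phrases it via the character $\chi_{\varphi}=\chi_{\omega}+\chi_{\omega^2}$ of $\ZZ_6$ on $H^0(K_C)$ and K\"unneth, observing that only $\chi_3=\chi_{\varphi}^3$ contains the trivial character, exactly once (from the summand with all eigenvalues $\omega^2$).

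The gap is in Step 2, which you announce but do not carry out, and whose partial computations contain two concrete errors. First, $g$ does \emph{not} fix the points of $C$ with $x_0=0$: in $\PP(1,1,3)$ one has $(0:\omega x_1:y)=(0:x_1:\omega^{-3}y)=(0:x_1:-y)$, so $g$ \emph{swaps} $(0:1:1)$ and $(0:1:-1)$; these two points are fixed only by $g^2$ (with local weight $\omega^4$) and form a single $\ZZ_6$-orbit. Hence $\Fix(g)=\{(1:0:\pm1)\}$, and at \emph{both} points the tangent weight is $\omega$ --- your claim that the weights lie in $\{1,5\}$ is false, and it is precisely the fact that both weights equal $1$ that forces all $2^3=8$ triples $p_{i_1}\times p_{i_2}\times p_{i_3}$ with $i_j\in\{0,1\}$ (each a full orbit, since the stabilizer is all of $\ZZ_6$) to give singularities of type $\frac{1}{6}(1,1,1)$, of age $\frac12$ and hence non-canonical. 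Second, you never verify that no \emph{other} point contributes a non-canonical singularity, which is the other half of the assertion. The paper does this: the remaining $64-8=56$ points with nontrivial stabilizer are the triples involving at least one of $(0:1:\pm1)$; they have stabilizer $\langle g^2\rangle\cong\ZZ_3$, form $28$ orbits of length $2$, and descend to $4$ points of type $\frac13(1,1,1)$ (canonical, with crepant resolution) and $24$ of type $\frac13(1,1,2)$ (terminal). Your worry about ``the six fixed points of the hyperelliptic involution $g^3$'' is also misplaced: $g^3$ acts by $(x_0:x_1:y)\mapsto(x_0:-x_1:y)$, which is not the hyperelliptic involution and fixes only $(1:0:\pm1)$, so it contributes no additional points. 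Until this enumeration is actually performed, the second assertion of the Proposition is not proved but only read off Table \ref{figure CY}.
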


\begin{proof}
The points on $C$ with non-trivial stabilizer subgroup of $\ZZ_6$ are the four points $p_0,p_1,p_2,p_3$ with the following weighted homogeneous coordinates $(x_0:x_1:y)$:
\begin{align*}
p_0=&(1:0:1) &
p_1=&(1:0:- 1) &
p_2=&(0:1:1) &
p_3=&(0:1:- 1) \\
\end{align*}
In the table below, for each point $p_j$, we give a generator of its stabilizer, and the action of the generator on a local parameter of the curve $C$ near $p_j$.

{\small
\begin{center}%\label{chartable} 
\setlength{\tabcolsep}{1pt}
\begin{tabular}{|c |c | c | c |}
\hline
point & $ ~ p_{0/1}=(1:0:\pm 1) ~ $ & $ ~ p_{2/3}=(0:1:\pm 1) ~ $   \\
\hline
generator of the stabilizer  & $ g  $ & $ g^2 $   \\
\hline
local action  & $x \mapsto \omega x$ & $x \mapsto \omega^4 x$  \\
\hline
\end{tabular}
\end{center}
}

$p_0$ and $p_1$ are then stabilized by the whole group $\ZZ_6$, forming then two orbits of cardinality $1$, whereas  $p_2$ and $p_3$ are stabilized by the index two subgroup of $\ZZ_6$, and form a single orbit. 

Consequently the points with nontrivial stabilizer are the $64$ points $p_{i_1}\times p_{i_2}\times p_{i_3}$ forming 
$8$ orbits of cardinality $1$, the points $p_{i_1}\times p_{i_2}\times p_{i_3}$ with $i_j \in \{0,1\}$, and
 and $28$ of cardinality $2$. So $C^3/\ZZ_6$ has $36$ singular points:
 \begin{itemize}
 \item $8$ singular points of type  $ \frac{1}{6}(1,1,1)$, the classes of the points $p_{i_1}\times p_{i_2}\times p_{i_3}$ with $i_j \in \{0,1\}$: these are not canonical;
 \item $4$ singular points of type  $ \frac{1}{3}(1,1,1)$, the classes of the points $p_{i_1}\times p_{i_2}\times p_{i_3}$ with $i_j \in \{2,3\}$: these have a crepant resolution;
 \item $24$ singular points of type  $ \frac{1}{3}(1,1,2)$, the classes of the remaining points $p_{i_1}\times p_{i_2}\times p_{i_3}$: these are terminal singularities.
 \end{itemize}

We prove now that a resolution $\rho \colon \widehat{X} \to X=C^3/\ZZ_6$ has  invariants $p_g(\widehat{X})=1$,  $q_1(\widehat{X})=q_2(\widehat{X})=0$ using representation theory and the fact that 
\[
H^0(\widehat{X}, \Omega_{\widehat{X}}^i) \simeq H^0(C^3,\Omega_{C^3}^i)^G. 
\]

By Lemma \ref{actiononR}
the character of the natural representation
$\varphi \colon \ZZ_6 \to GL\big(H^0(K_C)\big)$ is 
$\chi_{\varphi} = \chi_{\omega} + \chi_{\omega^2}$.
By K\"unneth's formula  the characters $\chi_i$ of the $\ZZ_6$ 
representations on  $H^0(C^3,\Omega_{C^3}^i)$ are respectively
\begin{align*}
\chi_3 =& \chi_{\varphi}^3 &
\chi_2 = &3 \chi_{\varphi}^2 &
\chi_1 =& 3 \chi_{\varphi}. 
\end{align*}

The claim follows, since $\chi_3$ contains exactly one copy of the trivial character whereas 
$\chi_2$ and $\chi_1$ do not contain the trivial character at all.  
\end{proof}
We write coordinates
\[
\left(
(x_{01}:x_{11}:y_1),
(x_{02}:x_{12}:y_2),
(x_{03}:x_{13}:y_3)
\right)
\]
on $\PP(1,1,3)^3$, so that $C^3$ is the locus defined by the ideal $\left(y^2_j-x_{0j}^6-x_{1j}^6, j=1,2,3 \right)$.

K\"unneth's formula yields a basis for $H^0(dK_{C^3})$: 
\[
\bigg\lbrace \prod_{i=1}^3 x_{0 i}^{a_i}  x_{1 i}^{b_i} y_i^{c_i} ~ \big\vert ~ 
a_i+b_i+3c_i =d,  \quad c_i =0,1  \bigg\rbrace. 
\]
on which $g$ acts as
\[
\prod_{i=1}^3 x_{0 i}^{a_i}  x_{1 i}^{b_i} y_i^{c_i}  \mapsto \omega^{\sum_i (a_i+2bi+3c_i)} \prod_{i=1}^3 x_{0 i}^{a_i}  x_{1 i}^{b_i} y_i^{c_i} 
\]

By the proof of Proposition  \ref{firstex}, writing $p_i=(1:0:(-1)^i)\in \mathbb P(1,1,3)$ for $i =0,1$, the eight points 
\[
p_{i_1}\times p_{i_2}\times p_{i_3} , \qquad  i_j =0,1
\]
are precisely those that descend to the eight singularities of type $\frac{1}{6}(1,1,1)$. 

To determine the plurigenenera of $X$ we need the following lemma.
\begin{lemma}\label{Id=P3d}
For all $d \geq 1$, the sheaf of ideals $\sI_d$ equals $\sP^{3d}$, where $\sP$ is the ideal of the reduced scheme 
$\{p_{i_1}\times p_{i_2}\times p_{i_3} |  i_j =0\}$.
\end{lemma}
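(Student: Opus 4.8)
The plan is to feed the threefold into Theorem~\ref{mainthm} and then run the local toric recipe of Section~\ref{CyclicQuotients} at each non‑canonical point of $X=C^3/\ZZ_6$.

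\textbf{Step 1 (reduction).} By Theorem~\ref{mainthm}, $\sI_d=\sO_{C^3}(-dR)\otimes\phi_{\ast}\sO_{\widetilde{Y}}(\epsilon^{\ast}dE)$, where $Y=C^3$ and $G=\ZZ_6$. I would first observe that $R=0$: by the stabilizer analysis in the proof of Proposition~\ref{firstex}, every non‑trivial element of $\ZZ_6$ fixes only finitely many points of $C$, hence only finitely many points of $C^3$, so $\pi\colon C^3\to X$ is quasi‑\'etale and $R=K_{C^3}-\pi^{\ast}K_X=0$. Hence $\sI_d=\phi_{\ast}\sO_{\widetilde{Y}}(\epsilon^{\ast}dE)$, which is an ideal sheaf of $\sO_{C^3}$ since $\phi_{\ast}\sO_{\widetilde{Y}}=\sO_{C^3}$ ($\phi$ proper birational, both sides normal) and, on each local toric model, $\epsilon^{\ast}dE$ is a combination of $\phi$‑exceptional prime divisors which is anti‑effective over the non‑canonical points.

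\textbf{Step 2 (localization).} The support of $\epsilon^{\ast}dE$ lies in $\Exc(\phi)$, which sits over the $64$ points of $C^3$ with non‑trivial stabilizer, so $\phi_{\ast}\sO_{\widetilde{Y}}(\epsilon^{\ast}dE)$ coincides with $\sO_{C^3}$ away from these points. Over the $56$ of them mapping to the canonical singularities $\tfrac13(1,1,1)$ and $\tfrac13(1,1,2)$, the Remark following Proposition~\ref{pushfor} says the local ideal is trivial. So everything reduces to computing the stalk at each of the $8$ points $p=p_{i_1}\times p_{i_2}\times p_{i_3}$ with $i_j\in\{0,1\}$, which lie over the $\tfrac16(1,1,1)$ singularities.

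\textbf{Step 3 (local computation).} Fix such a $p$. By the proof of Proposition~\ref{firstex} the whole of $\ZZ_6$ stabilizes $p$ and acts on suitable local coordinates $(x_1,x_2,x_3)$ on $C^3$ at $p$ by $\diag(\omega,\omega,\omega)$, so the local model of Section~\ref{CyclicQuotients} is the singularity $\tfrac16(1,1,1)$, with $N=\mathbb Z^3+\tfrac{\mathbb Z}{6}(1,1,1)$. The only ray of the resolution fan with negative discrepancy is $\rho=\mathbb R_{\ge 0}(1,1,1)$, with primitive generators $v_{\rho}=\tfrac16(1,1,1)$ in $N$ and $w_{\rho}=(1,1,1)$ in $\mathbb Z^3$; thus $\lambda_{\rho}=6$ and the discrepancy of $E_{\rho}$ is $\tfrac12-1=-\tfrac12$, so, since $\epsilon^{\ast}E_{\rho}=\lambda_{\rho}F_{\rho}$ and $E=-\tfrac12 E_{\rho}$ by Proposition~\ref{discrepancies}, we get $\epsilon^{\ast}E=-3F_{\rho}$ locally at $p$. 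Applying Proposition~\ref{pushfor} with $D=\epsilon^{\ast}E$ and $k=d$, the polyhedron is $P_{dD}=\{u\in\mathbb R^3 : u_i\ge 0,\ u_1+u_2+u_3\ge 3d\}$, so near $p$ the sheaf $\phi_{\ast}\sO_{\widetilde{Y}}(\epsilon^{\ast}dE)$ is the ideal spanned by the monomials $x_1^{\alpha_1}x_2^{\alpha_2}x_3^{\alpha_3}$ with $\alpha_1+\alpha_2+\alpha_3\ge 3d$, that is, $\mathfrak{m}_p^{3d}$.

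\textbf{Step 4 (globalization).} Putting the pieces together, $\sI_d$ is the ideal sheaf equal to $\mathfrak{m}_p^{3d}$ at each of the $8$ points $p$ and to $\sO_{C^3}$ elsewhere. Since the $8$ points are pairwise distinct, the ideals $\mathfrak{m}_p$ are pairwise comaximal, hence so are the $\mathfrak{m}_p^{3d}$, and therefore this sheaf equals $\bigcap_p\mathfrak{m}_p^{3d}=\bigl(\bigcap_p\mathfrak{m}_p\bigr)^{3d}=\sP^{3d}$, which is the assertion. The only genuinely computational ingredient is Step~3, and the point to be careful about there is the passage between the lattices $N$ and $\mathbb Z^3$ — namely obtaining $\lambda_{\rho}=6$, so that $\epsilon^{\ast}E=-3F_{\rho}$ rather than the bare discrepancy $-\tfrac12$ — together with the identification, routine since all the relevant constructions are local in the analytic topology, of the picture around $p$ with the toric model of Section~\ref{CyclicQuotients}.
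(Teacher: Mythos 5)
Your argument is correct and follows essentially the same route as the paper's proof: identify the local toric model $\tfrac16(1,1,1)$ at the eight relevant points, note that the only ray with negative discrepancy is the one generated by $\tfrac16(1,1,1)$ so that $\epsilon^{\ast}dE=-3dF_{\rho}$ locally, and read off from Proposition \ref{pushfor} that the resulting polyhedron gives the $3d$-th power of the maximal ideal. You merely make explicit some steps the paper leaves implicit (that $R=0$ because the action is free in codimension one, that the ideal is trivial over the canonical singularities, and the comaximality argument assembling the local stalks into $\sP^{3d}$).
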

\begin{proof}
As already mentioned, all non-canonical singularities are of type $\frac16(1,1,1)$.
These singularities are resolved by a single toric blowup along the ray $\rho$ generated by 
$v:=\frac16(1,1,1)$. The polyhedron associated to the divisor $\epsilon^{\ast}dE=-3dF_{\rho}$ is 
\[
P_{-3dF_{\rho}}=\lbrace u\in \mathbb R^3 ~ | ~ u_i \geq 0, \quad  u_1+u_2+u_3 \geq 3d \rbrace,
\]
so the corresponding ideal is just the $3d$-th power of the maximal ideal. 
\end{proof}
Then we can prove
\begin{proposition}
$X=C^3/\ZZ_6$ is birational to a Calabi-Yau threefold.
\end{proposition}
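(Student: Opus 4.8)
The plan is to reduce the statement to a short computation on $C^3$ via the machinery developed above. By Proposition \ref{firstex} a resolution $\rho\colon\widehat{X}\to X$ is a numerical Calabi-Yau product-quotient threefold, and $\rho$ is birational; hence by Proposition \ref{plurigenera==>CY} it suffices to prove that $P_d(\widehat{X})=1$ for every $d\geq 1$.

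First I would apply Theorem \ref{mainthm} with $Y=C^3$ and $G=\ZZ_6$. Every non-trivial element of $G$ acts on each factor $C$ with only finitely many fixed points, so $G$ acts on $C^3$ with isolated fixed points and the quotient map $\pi\colon C^3\to X$ is \'etale in codimension one; consequently the ramification divisor $R$ vanishes and $\sI_d=\phi_{\ast}\sO_{\widetilde{Y}}(\epsilon^{\ast}dE)$, which by Lemma \ref{Id=P3d} equals $\sP^{3d}$, where $\sP$ is the ideal of the reduced set of the eight points $p_{i_1}\times p_{i_2}\times p_{i_3}$, $i_j\in\{0,1\}$. As these eight points are distinct and reduced, $\sP^{3d}$ localises to $\mathfrak m^{3d}$ at each of them, so
\[
P_d(\widehat{X})=\dim_{\CC} H^0\big(C^3,\sO_{C^3}(dK_{C^3})\otimes\sP^{3d}\big)^{\ZZ_6}.
\]

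Next I would intersect the two conditions on the K\"unneth basis of $H^0(C^3,\sO_{C^3}(dK_{C^3}))$, consisting of the monomials $M=\prod_{i=1}^3 x_{0i}^{a_i}x_{1i}^{b_i}y_i^{c_i}$ with $a_i+b_i+3c_i=d$ and $c_i\in\{0,1\}$. By Lemma \ref{actiononR} the element $g$ acts on $M$ by the scalar $\omega^{\sum_i(a_i+2b_i+3c_i)}$, so $M$ is $\ZZ_6$-invariant precisely when $\sum_i(a_i+2b_i+3c_i)$ is divisible by $6$. For the condition $M\in H^0(\sP^{3d})$ I would compute the vanishing order of $M$ at each of the eight points: near a point of the form $(1:0:\pm 1)$, using the chart $x_0\neq 0$ with local parameter $u=x_1/x_0$, the sections $x_0$ and $y$ do not vanish while $x_1$ vanishes to order one, so at every one of the eight points $M$ lies in $\mathfrak m^{b_1+b_2+b_3}$ but not in $\mathfrak m^{b_1+b_2+b_3+1}$. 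Hence $M\in H^0(\sP^{3d})$ if and only if $b_1+b_2+b_3\geq 3d$.

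Finally I would observe that $b_i\leq d$, with equality exactly when $a_i=c_i=0$; therefore $b_1+b_2+b_3\geq 3d$ forces $b_i=d$ for every $i$, i.e. $M=(x_{11}x_{12}x_{13})^d$, which is $\ZZ_6$-invariant since then $\sum_i(a_i+2b_i+3c_i)=6d$. Thus the invariant subspace in the displayed formula is one dimensional for every $d\geq 1$, so $P_d(\widehat{X})=1$ for all $d$, and Proposition \ref{plurigenera==>CY} gives that $\widehat{X}$, hence $X$, is birational to a Calabi-Yau threefold. I do not anticipate a genuine obstacle: the only point requiring a little care is the local vanishing-order computation at the eight points — equivalently, recognising that $\sP^{3d}$ is $\mathfrak m^{3d}$ there and that $R=0$ — while everything else is bookkeeping with the K\"unneth basis and the character from Lemma \ref{actiononR}.
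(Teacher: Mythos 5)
Your proposal is correct and follows essentially the same route as the paper: reduce to showing $P_d(\widehat{X})=1$ via Proposition \ref{plurigenera==>CY}, apply Theorem \ref{mainthm} together with Lemma \ref{Id=P3d}, and then check on the K\"unneth monomial basis that the vanishing condition at the eight points of type $\frac16(1,1,1)$ forces $b_1+b_2+b_3\geq 3d$, hence $M=(x_{11}x_{12}x_{13})^d$, which is $G$-invariant. The only cosmetic difference is that you phrase the condition as a local vanishing-order computation at the eight points, while the paper phrases it as membership in the ideal $(x_{11},x_{12},x_{13})^{3d}$ of the multigraded coordinate ring — these are the same computation.
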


\begin{proof}
By Proposition \ref{plurigenera==>CY} we only need to prove that all plurigenera are equal to $1$, so, by Theorem \ref{mainthm}, that, $\forall d \geq 1$, 
\[
H^0\left(C^3,\sO_{C^3}(dK_{C^3}) \otimes \mathcal{I}_d \right)^G \cong \CC
\] 

The vector space $H^0\left(C^3,\sO_{C^3}(dK_{C^3}) \right)$ is contained in the $\ZZ^3$-graded ring
\[
R:=\CC[x_{01},x_{11},y_1,x_{02},x_{12},y_2,x_{03},x_{13},y_3]/\left(y_i^2-x_{i0}^6-x_{i1}^6, i=1,2,3\right)
\]
with gradings
\begin{align*}
\deg x_{01}=&(1,0,0)&
\deg x_{11}=&(1,0,0)&
\deg y_{1}=&(2,0,0)\\
\deg x_{02}=&(0,1,0)&
\deg x_{12}=&(0,1,0)&
\deg y_{2}=&(0,2,0)\\
\deg x_{03}=&(0,0,1)&
\deg x_{13}=&(0,0,1)&
\deg y_{3}=&(0,0,2)\\
\end{align*}
as the subspace $R_{d,d,d}$ of the homogeneous elements of multidegree $(d,d,d)$. By Lemma \ref{actiononR} the natural action of $G$ on $H^0\left(C^3,\sO_{C^3}(dK_{C^3}) \right)$ is induced by the restriction of the following action of its generator $g$ on $R$:
\begin{align*}
x_{0i} \mapsto& \omega x_{0i}&
x_{1i} \mapsto& \omega^2 x_{1i}&
y_{i} \mapsto& \omega^3 y_{i}
\end{align*}

By Lemma \ref{Id=P3d}, since the elements of $R$ vanishing on the reduced scheme 
$\{p_{i_1}\times p_{i_2}\times p_{i_3} |  i_j =0\}$ form the ideal $ (x_{11},x_{21},x_{31})$
 \[
H^0\left(C^3,\sO_{C^3}(dK_{C^3}) \otimes \mathcal{I}_d \right) = R_{d,d,d} \cap (x_{11},x_{12},x_{13})^{3d}=
\langle (x_{11}x_{12}x_{13})^d\rangle
\]
is one dimensional. 

Since its generator $x_{11}x_{12}x_{13}$ is $G-$invariant, the proof is complete.
\end{proof}

\section{A fake Calabi-Yau 3-fold}\label{fakeCY}
We consider the hyperelliptic curves 
\[
C_2:=\lbrace y^2 =x_0x_1(x_0^4+x_1^4) \rbrace\subset \mathbb P(1,1,3) \qquad \makebox{and} \qquad 
C_3:=\lbrace y^2 =x_0^8 + x_1^8 \rbrace\subset \mathbb P(1,1,4)
\]
of respective genus two and three, together with the $\ZZ_8$-actions 
$g(x_0:x_1:y)=(x_0:\omega^2x_1:\omega y)$ 
on $C_2$ and 
$g(x_0:x_1:y)=(x_0:\omega x_1: y)$ 
on $C_3$, where $\omega=e^{\frac{2\pi i}{8}}$.

\begin{proposition}
The threefold $X=(C_2^2 \times C_3)/G$, 
where $G=\ZZ_8$ acts diagonally, is a numerical Calabi-Yau threefold. 
$X$ has exactly $44$ singular points and more precisely
\[
6 \times \frac{1}{8}(1,1,3), \quad 
2 \times \frac{1}{8}(1,1,1), \quad 
3 \times \frac{1}{4}(1,1,3), \quad 
1 \times \frac{1}{4}(1,1,1).
\quad 
32  \times \frac{1}{2}(1,1,1). 
\]
\end{proposition}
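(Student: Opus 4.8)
The plan is to mimic closely the proof of Proposition \ref{firstex}, separating the statement into two parts: first the computation of the singularities of $X$, and then the verification that a resolution $\widehat{X}$ is a numerical Calabi--Yau.

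First I would determine the points of $C_2$ and $C_3$ with nontrivial $\ZZ_8$-stabilizer. For $C_2 = \{y^2 = x_0x_1(x_0^4+x_1^4)\}$ the generator $g(x_0:x_1:y) = (x_0:\omega^2 x_1:\omega y)$ fixes only points with $x_0x_1 = 0$ (since $\omega^2 \neq 1$ forces $x_1 = 0$ or $x_0 = 0$, and then $y = 0$); so the two Weierstrass points $(1:0:0)$ and $(0:1:0)$, together with any further fixed points of the powers $g^k$ lying over ramification of $y$. I would tabulate, for each such point, a generator of its stabilizer and its linearized action on a local parameter of the curve, using the same adjunction/local-coordinate computation as in Lemma \ref{actiononR} (in the chart $x_0\neq 0$, the form $\frac{du}{2v/ x_0^?}$ transforms by the appropriate power of $\omega$). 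Doing the same for $C_3 = \{y^2 = x_0^8+x_1^8\} \subset \PP(1,1,4)$ with $g(x_0:x_1:y)=(x_0:\omega x_1:y)$: here $(1:0:\pm 1)$ and $(0:1:\pm 1)$ are the relevant fixed points, with $g$ (resp.\ $g$) acting on a local parameter by $\omega$ (resp.\ $\omega^{-1}=\omega^7$). Once I have, for each curve, the list of fixed points with stabilizer order and local weight, the fixed points of the diagonal $\ZZ_8$-action on $C_2\times C_2\times C_3$ are the triples whose three stabilizers have a common nontrivial subgroup; for each orbit the type $\frac1m(a_1,a_2,a_3)$ is read off by restricting the three local weights to that common cyclic subgroup of order $m$ (normalising the first weight to $1$), exactly as in \cite{BaPi12}*{Proposition 1.17}. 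Counting orbits by cardinality ($|G|/|{\rm Stab}|$) should then reproduce the claimed list $6\times\frac18(1,1,3)$, $2\times\frac18(1,1,1)$, $3\times\frac14(1,1,3)$, $1\times\frac14(1,1,1)$, $32\times\frac12(1,1,1)$, for a total of $44$.

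For the numerical Calabi--Yau assertion I would argue as in Proposition \ref{firstex}: using $H^0(\widehat{X},\Omega^i_{\widehat{X}}) \simeq H^0(C_2\times C_2\times C_3, \Omega^i)^G$ (valid since the singularities of $X$ are rational and $\rho$ is a resolution) and the Chevalley--Weil / Künneth description of the characters. From the local weights one gets that the character of $\ZZ_8$ on $H^0(K_{C_2})$ is $\chi_{\omega^2}+\chi_{\omega^{?}}$ and on $H^0(K_{C_3})$ is $\chi_{\omega}+\chi_{\omega^2}+\chi_{\omega^3}$ (the three holomorphic differentials of a genus-$3$ hyperelliptic curve, with weights determined by $g$); then $\chi_3 = \chi_{C_2}^{\otimes 2}\otimes\chi_{C_3}$, $\chi_2$ and $\chi_1$ are the appropriate sums of tensor products with one factor replaced by the trivial-weight part, and I would check that $\chi_3$ contains the trivial character exactly once while $\chi_1,\chi_2$ do not contain it at all. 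This yields $p_g(\widehat{X})=1$, $q_1(\widehat{X})=q_2(\widehat{X})=0$.

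The main obstacle I anticipate is purely bookkeeping: correctly linearising the $\ZZ_8$-action at every fixed point of every power $g^k$ on both curves (the genus-two curve $C_2$ has a more subtle branch structure because the action permutes some Weierstrass points and $\omega$ has order $8$ while $g^2$ has order $4$), and then correctly pairing up the fixed points of the three factors into orbits with the right stabilizer and reading off the normalised type $\frac1m(1,a,b)$. Getting the exact multiplicities $6,2,3,1,32$ right requires care with the orbit-counting; the Calabi--Yau character computation is then a short and essentially mechanical check of which tensor products of one-dimensional characters of $\ZZ_8$ contain the trivial one.
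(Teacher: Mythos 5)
Your overall strategy is exactly the paper's: list the points of $C_2$ and $C_3$ with nontrivial stabilizer together with the linearised local action (computed as in Lemma \ref{actiononR}), pair them into orbits on $C_2^2\times C_3$ reading off the type $\frac1m(1,a,b)$ from the common stabilizer, and then verify $p_g=1$, $q_1=q_2=0$ by the same character/K\"unneth computation as in Proposition \ref{firstex}. So there is no methodological divergence.

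There is, however, one concrete error in the part of the computation you do commit to, and it is precisely the point on which the stated singularity list hinges. On $C_3=\{y^2=x_0^8+x_1^8\}\subset\PP(1,1,4)$ the points $(0:1:\pm1)$ are \emph{not} fixed by $g$: since $g(0:1:\pm1)=(0:\omega:\pm1)$ and rescaling by $\lambda=\omega^{-1}$ multiplies the weight-$4$ coordinate $y$ by $\lambda^4=-1$, the automorphism $g$ swaps $(0:1:1)$ and $(0:1:-1)$. Their stabilizer is only $\langle g^2\rangle\cong\ZZ_4$, with $g^2$ acting on a local parameter by $\omega^{6}$ (not $\langle g\rangle$ acting by $\omega^{-1}$ as you assert). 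This is exactly what produces the $3\times\frac14(1,1,3)$ and $1\times\frac14(1,1,1)$ entries of the list: the eight points $q_i\times q_j\times s_k$ with $k\in\{3,4\}$ form four orbits of length two with $\ZZ_4$-quotient singularities. With your stabilizers one would instead predict eight further $\frac18$-points, and the claimed multiplicities $6,2,3,1,32$ (and the total of $44$) would not come out. The same caution applies to the weight you propose for $H^0(K_{C_2})$: the character is $\chi_\omega+\chi_{\omega^3}$ (the action on the canonical ring is $x_0\mapsto\eta^2x_0$, $x_1\mapsto\eta^6x_1$ with $\eta^2=\omega$), not $\chi_{\omega^2}+\cdots$; the trivial character then appears exactly once in $(\chi_\omega+\chi_{\omega^3})^2(\chi_\omega+\chi_{\omega^2}+\chi_{\omega^3})$, as needed. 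You flag the bookkeeping as the main risk, and indeed it is: as written the plan would fail at the $C_3$ step.
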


\begin{proof}
The points with non-trivial stabilizer on $C_2$  are 
$q_0:=(0:1:0)$ and  $q_1:=(1:0:0)$ with the full group as stabilizer and 
the points 
\[
p_i:=(1:x_i:0), \quad \quad \makebox{where} \quad \quad  x_i^4 =-1
\]
with stabilizer $\langle g^4 \rangle \cong \ZZ_2$. 

Next, we compute the local action around the points $p_i$ and $q_i$.

The points $q_1$ and $p_i$ are contained in the smooth affine chart $x_0 \neq 0$ of $\PP(1,1,3)$, with affine coordinates $u=\frac{x_1}{x_0}$ and $v=\frac{y}{x^2_0}$. Here, the curve is the vanishing locus of the polynomial $f:=v^2-u^5-u$ and the group acts via $
(u,v) \mapsto (\omega^2u,\omega v)$.

Since $\frac{\partial f}{\partial u}(q_1)=-1$ and $\frac{\partial f}{\partial v}(p_i)=4$, by the implicit function theorem, $v$ is a local parameter for $C_2$ near these points. In particular $g$ acts around
$q_1$ as the multiplication by $\omega$ and $g^4$ acts around $p_i$ as the multiplication by  $\omega^4=-1$.

A similar computation on  the affine chart $x_1 \neq 0$ shows
that $g$ acts around $q_0$ as the multiplication by $\omega^3$. 
The table below summarizes our computation. 

\bigskip

{\small
\begin{center}%\label{chartable} 
\setlength{\tabcolsep}{1pt}
\begin{tabular}{|c |c | c | c |}
\hline
point & $ ~ q_0=(0:1:0) ~ $ & $ ~ q_1=(1:0:0) ~ $ & $ ~ p_i=(1:x_i:0) ~ $  \\
 &   &  & $x_i^4=-1$  \\
\hline
Stab  & $\langle g \rangle $ & $\langle g \rangle $ & $\langle g^4 \rangle$  \\
\hline
$~ \makebox{local action} ~ $ & $x \mapsto \omega^3 x $ & $x \mapsto \omega x$ & $x \mapsto -x$ \\
\hline
\end{tabular}
\end{center}
}

\bigskip
\noindent 
Similarly, for $C_3$, we obtain 

\bigskip 

{\small
\begin{center}%\label{chartable} 
\setlength{\tabcolsep}{1pt}
\begin{tabular}{|c |c | c |}
\hline
points & $ ~ s_1=(1:0:1), ~ $  & $ ~ s_3=(0:1:1), ~ $  \\
 & $ ~ s_2=(1:0:-1) ~ $  & $ ~ s_4=(0:1:-1) ~ $  \\
\hline
Stab  & $\langle g \rangle $ &  $\langle g^2 \rangle$   \\
\hline
$~ \makebox{local action} ~ $  & $x \mapsto \omega x$ & $x \mapsto \omega^6 x$  \\
\hline
\end{tabular}
\end{center}
}

Then the 
diagonal action on $C_2^2 \times C_3$ admits $6\cdot 4 \cdot 4=144$ points with non-trivial stabilizer. 
The $8$ points of the form 
\[
q_i \times q_j \times s_k, \quad  \makebox{where} \quad i,j \in \lbrace 0,1 \rbrace \quad  \makebox{and} \quad 
k \in  \lbrace 1,2 \rbrace. 
\]
are stabilized by the full group. Therefore, they are mapped to $8$ singular points on the quotient.
These singularities are 
\[
\begin{cases}
   2 \times \frac{1}{8}(1,1,1) & \text{for } i=j=0 \\
  4 \times \frac{1}{8}(1,1,3), & \text{for }i\neq j \\
   2 \times \frac{1}{8}(1,3,3) & \text{for } i=j=1
  \end{cases}
\]
The $8$ points 
\[
q_i \times q_j \times s_k, \quad  \makebox{where} \quad i,j \in \lbrace 0,1 \rbrace \quad  \makebox{and} \quad 
k \in  \lbrace 3,4 \rbrace
\]
have $\langle g^2 \rangle \cong \ZZ_4$ as stabilizer. These map to 4 singular points on the quotient:

\[
\begin{cases}
  1 \times \frac{1}{4}(1,1,1) & \text{for } i=j=0 \\
  3 \times \frac{1}{4}(1,1,3), & \text{else}
  \end{cases}
\]
The remaining $128$ points have stabilizer ${\mathbb Z}_2$. 
These points yield $32$ terminal singularities of type $\frac{1}{2}(1,1,1)$ on the quotient.

To show that $X$ is numerical Calabi-Yau, we verify that 
\[
p_g(\widehat{X}) =1,\qquad  \makebox{and} \qquad q_2(\widehat{X})= q_1(\widehat{X}) =0 
\]
for a resolution $\widehat{X}$ of $X$ like in the proof of Proposition \ref{firstex}. 
\end{proof}
This example is not birational to a Calabi-Yau threefold. 

\begin{proposition}\label{not CY!}
Let $\rho \colon \widehat{X} \to X$ be a resolution of the singularities of $X$ and $Z$ be a minimal model of $\widehat{X}$. 

Then $Z$ is not Calabi-Yau. 
\end{proposition}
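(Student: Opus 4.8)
The plan is to show that if $Z$ were Calabi-Yau, then in particular all plurigenera $P_d(\widehat X)$ would equal $1$, and then to reach a contradiction by exhibiting a single value of $d$ for which $P_d(\widehat X) > 1$ via Theorem \ref{mainthm}. Since $\widehat X$ is a numerical Calabi-Yau product-quotient threefold, by Proposition \ref{plurigenera==>CY} and the subsequent discussion, $Z$ is Calabi-Yau exactly when $P_d(\widehat X) = 1$ for every $d \ge 1$; so it suffices to find one $d$ with $P_d(\widehat X) \ge 2$. By Theorem \ref{mainthm}, $P_d(\widehat X) = \dim H^0\big(C_2^2\times C_3, \sO(dK)\otimes \sI_d\big)^G$, and the strategy is to make this dimension explicit for small $d$.

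**First I would** set up the canonical rings, exactly as in Section \ref{trueCY}: write $R(C_2,K_{C_2}) = \CC[x_0,x_1,y]/(y^2 - x_0x_1(x_0^4+x_1^4))$ with $\deg x_i = 1$, $\deg y = 3$, and $R(C_3,K_{C_3}) = \CC[x_0,x_1,y]/(y^2 - x_0^8 - x_1^8)$ with $\deg x_i = 1$, $\deg y = 4$. By adjunction (the analogue of Lemma \ref{actiononR}, computed on the smooth affine charts via the implicit function theorem) the $\ZZ_8$-action lifts to an action on holomorphic forms; one records how $g$ acts on the generators of the two canonical rings. This gives an explicit $\ZZ^3$-graded ring $R$ for $C_2^2\times C_3$ together with the diagonal $G$-action, and $H^0(dK) = R_{?,?,?}$ in the appropriate multidegree, with the caveat that the weights of $x_0,x_1,y$ in the first two factors come from $C_2$ (so $y$ has weight $2$ there under the $K_{C_2}$-grading) while in the third factor they come from $C_3$.

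**Next I would** compute the ideals $\sI_d$. The non-canonical singularities are the $6 + 2 = 8$ points of type $\frac18(1,1,3)$ and $\frac18(1,1,1)$ together with the $3 + 1 = 4$ points of type $\frac14(1,1,3)$ and $\frac14(1,1,1)$ — all of which are \emph{not} canonical — so they all contribute nontrivially to $\sI_d$, unlike in Section \ref{trueCY}. Using Propositions \ref{discrepancies} and \ref{pushfor} one resolves each $\frac1m(1,a,b)$ by toric blowups, computes the discrepancy and the pullback $\epsilon^*E$, and writes down the associated polyhedron $P_{\epsilon^* dE}$ and hence the monomial ideal $I_d$ at each singular point; intersecting these local conditions (translated back to conditions on elements of $R$ vanishing along the corresponding curves on $C_2^2\times C_3$) gives $\sI_d$ globally. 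The upshot should be: $\sI_d$ is a product of powers of prime ideals of the curves $\{q_i\times q_j\times s_k\}$ etc., with exponents dictated by the discrepancies (e.g. a $\frac18(1,1,3)$ point forces vanishing to some order along a curve, analogous to the $\sP^{3d}$ of Lemma \ref{Id=P3d} but now with several centers of several types).

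**The hard part will be** organizing the bookkeeping so that one can cleanly compute $\dim (R_{d,d,d}\cap \sI_d)^G$ for a well-chosen $d$ and verify it is at least $2$ (equivalently, that the $G$-invariant part of $H^0(dK)\otimes\sI_d$ is strictly bigger than a single monomial). I expect the cleanest route is to pick $d$ small enough that $H^0(dK)$ has a manageable monomial basis yet large enough that the vanishing imposed by $\sI_d$ still leaves two linearly independent $G$-invariant monomials — then the proof reduces to listing those monomials, checking their $G$-weight is trivial, and checking the vanishing conditions coming from each singularity. Once $P_d(\widehat X)\ge 2$ for that $d$, we conclude $\widehat X$ is not birational to a Calabi-Yau by Proposition \ref{plurigenera==>CY} (contrapositive), hence no minimal model $Z$ of $\widehat X$ is Calabi-Yau, which is the claim. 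An alternative, should the explicit computation be unwieldy, is to show directly that $\vol$ or the growth rate of $P_d$ is positive — i.e. $\widehat X$ is of general type, or at least $\kod(\widehat X)\ge 1$ with $Z$ not having $K_Z\sim_{\mathrm{lin}}0$ — but the monomial computation via Theorem \ref{mainthm} is the natural one given the machinery just developed.
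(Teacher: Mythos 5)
Your proposal follows essentially the same route as the paper: the paper takes $d=2$, tabulates the stalks of $\sI_2$ at the points lying over the non-canonical singularities, and exhibits three explicit $\ZZ_8$-invariant monomials in $H^0(2K_{C_2^2\times C_3}\otimes\sI_2)$ (found with MAGMA), giving $P_2(\widehat X)\ge 3>1$, whence $Z$ cannot be Calabi-Yau. One small correction: the three singularities of type $\frac14(1,1,3)$ are canonical (indeed terminal, as recorded in Table \ref{figure CY}), so they contribute only the trivial ideal; only the points over $\frac18(1,1,3)$, $\frac18(1,1,1)$, $\frac18(1,3,3)$ and $\frac14(1,1,1)$ impose conditions --- adding the spurious extra conditions would still give a valid lower bound, but would make the monomial search needlessly harder.
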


\begin{proof}
We show that $P_2(\widehat{X}) \geq 3$. A monomial basis of $H^0(2K_{C_2^2 \times C_3})$ is
\[
\bigg\lbrace \prod_{i=1}^3 x_{0 i}^{a_i}   x_{1 i}^{b_i} y_i^{c_i} ~ \big\vert ~ 
a_1+b_1+3c_1= a_2+b_2+3c_2 =2, ~ a_3 + b_3 + 4c_3 = 4 \bigg\rbrace. 
\]
The table below displays all points on $C_2^2 \times C_3$ with non-trivial stabilizer, that descend to a non-canonical singularity and the germ of the plurisection 
\[
\prod_{i=1}^3 x_{0 i}^{a_i} \cdot  x_{1 i}^{b_i} \cdot  y_i^{c_i}
\]
in local coordinates up to a unit as well as the stalks of the ideal $\sI_2$ in these points. 

\bigskip

{\small
\begin{center}%\label{chartable} 
\setlength{\tabcolsep}{1pt}
\begin{tabular}{|c |c | c | c |}
\hline
point & ~ singularity ~ & ~ germ ~ & ~ stalk  ~ \\
\hline
 &   &  &  \\
~ $(q_0,q_0,s_{1/2})$ ~ & ~ $\frac{1}{8}(1,1,3)$ ~ & ~ $y_1^{2a_1+c_1}y_2^{2a_2+c_2}x_3^{b_3}$ ~ &  
~ $\big((y_1,y_2)^3 +  (x_3) \big)^2$ ~ \\
 &   &  &  \\
\hline
 &   &  &  \\
~ $(q_1,q_1,s_{1/2})$ ~ & ~ $\frac{1}{8}(1,1,1)$ ~ & ~ $y_1^{2b_1+c_1}y_2^{2b_2+c_2}x_3^{b_3}$ ~ &  
~ $(y_1,y_2,x_3)^{10}$ \\
 &   &  &  \\
\hline
 &   &  &  \\
~ $(q_0,q_1,s_{1/2})$ ~ & ~ $\frac{1}{8}(1,3,3)$ ~ & ~ $y_1^{2a_1+c_1}y_2^{2b_2+c_2}x_3^{b_3}$ ~ & 
~  $\big((y_2,x_3)^3 +  (y_1) \big)^2$ \\
 &   &  &  \\
\hline
 &   &  &  \\
~ $(q_1,q_0,s_{1/2})$ ~ & ~ $\frac{1}{8}(1,3,1)$ ~ & ~ $y_1^{2b_1+c_1}y_2^{2a_2+c_2}x_3^{b_3}$ ~ & 
~ $\big((y_1,x_3)^3 +  (y_2) \big)^2$  \\
 &   &  &  \\
\hline
&   &  &  \\
~ $(q_0,q_0,s_{3/4})$ ~ & ~ $\frac{1}{4}(1,1,1)$ ~ & ~ $y_1^{2a_1+c_1}y_2^{2a_2+c_2}x_3^{a_3}$ ~ & 
~ $(y_1,y_2,x_3)^2$ \\
 &   &  &  \\
\hline
\end{tabular}
\end{center}
}

With the help of  MAGMA we found the following three monomial sections of $H^0(2K_{C_2^2 \times C_3} \otimes \sI_2)$:
\[
x_{11}^2x_{12}^2x_{03}^2x_{13}^2, \quad x_{01}x_{11}x_{12}^2x_{13}^4 \quad \makebox{and} \quad 
x_{11}^2x_{02}x_{12}x_{13}^4.
\] 
Using the same argument as in Lemma \ref{actiononR}, 
we obtain the action on the canonical ring $R(C_2,K_{C_2})$ as
\[
x_0 \mapsto \eta^2 x_0, \quad x_1 \mapsto \eta^6 x_1, \quad y \mapsto \eta^8 y, \quad \makebox{where} \quad \eta^2=\omega
\]
and on $R(C_3,K_{C_3})$ as
\[
x_0 \mapsto \eta x_0, \quad x_1 \mapsto \eta^3 x_1, \quad y \mapsto \eta^4 y.
\]
We conclude that the three sections above are also $\ZZ_8$ invariant, in particular $P_2(\widehat{X}) \geq 3$.
\end{proof}

Note that each of the three monomials in the proof of Proposition \ref{not CY!} contains a variable that does not appear in the other two. This implies that the subring of the canonical ring of $\widehat{X}$ generated by the three monomials is isomorphic to the ring of polynomials in three variables. In particular $\kod(\widehat{X})\geq 2$.

\section{Some minimal surfaces of general type}\label{MinSurfs}
In this section we construct some product-quotient surfaces with several singular points and investigate their minimality.

 The construction is as follows.

\begin{definition}
\noindent Let $a,b\in\mathbb{N}$ such that $\gcd(ab,1-b^2)=1$, $ab\geq 4$ and $b\geq 3$.
Define $n=ab$ and let $1\leq e\leq n-1$ be the unique integer such that $e\cdot(1-b^2)\equiv_n 1$ (i.e. $e$ represents the inverse modulo $n$ of $1-b^2$). For example, one can take $a=b\geq 3$. Define 
$$\omega=\e^{\frac{2\pi i}{n}}\qquad \mbox{ and }\qquad \lambda=\e^{\frac{2\pi i}{n(n-3)}}.$$
Consider the Fermat curve $C$ of degree $n$ in $\PP^2$, i.e. the plane curve \[x_0^n+x_1^n+x_2^n=0.\]
where $x_i$ are projective coordinates on $\PP^2$. Consider the natural action $\rho_1$ of  $G:=\Z_n\oplus \Z_n$ on $C$ generated by 
\begin{align}\label{ro1}
g_1\cdot(x_0:x_1:x_2)=&(\lambda x_0:\lambda \omega x_1 :\lambda x_2)& h_1\cdot(x_0:x_1:x_2)=&(\lambda x_0:\lambda x_1 :\lambda \omega x_2).
\end{align}
Define
\begin{equation}
g_2:=g_1h_1^{b}\qquad h_2:=g_1^{-b}h_1^{-1}\qquad (\mbox{ and } k_2=g_2^{-1}h_2^{-2}=g_1^{b-1}h_1^{1-b}).
\end{equation}
Under the above assumptions, $g_2$ and $h_2$ are generators of $G$, inducing  a second $G$-action $\rho_2$ on $C$ by 
\begin{align*}
g_2\cdot (x_0:x_1:x_2):=&g_1\cdot (x_0:x_1:x_2)& h_2\cdot (x_0:x_1:x_2):=&h_1\cdot (x_0:x_1:x_2)
\end{align*}
The diagonal action $\rho_1 \times \rho_2$ on $C \times C$ gives a product quotient surface $\widehat{X}_{a,b}$ with quotient model $X_{a,b}$. 
\end{definition}

 The action $\rho_1$ has $3$ orbits where the action is not free:
\begin{equation*}
\begin{array}{c}
\Fix(g_1)=\{(1:0:-\eta) \quad|\quad \eta^n=1\}\\
\Fix(h_1)=\{(1:-\eta:0) \quad|\quad \eta^n=1\}\\
\Fix(k_1)=\{(0:1:-\eta) \quad|\quad \eta^n=1\}
\end{array}
\end{equation*}
respectively stabilized by $\langle g_1\rangle, \langle h_1\rangle$ and $\langle k_1:=g_1^{-1}h_1^{-1} \rangle$.
Notice $g_1=g_2^{e}h_2^{2b}$ and $h_1=g_2^{-eb}h_2^{-e}$.
The following relations hold:
\begin{align*}\langle g_1\rangle\cap \langle g_2\rangle=& \langle g_1^a\rangle& \langle g_1\rangle\cap \langle h_2\rangle=& \langle 1\rangle& \langle g_1\rangle\cap \langle k_2\rangle=& \langle 1\rangle\\
\langle h_1\rangle\cap \langle h_2\rangle=& \langle h_1^a\rangle& \langle h_1\rangle\cap \langle k_2\rangle=& \langle 1\rangle& \langle k_1\rangle\cap \langle k_2\rangle=& \begin{cases}
\langle 1\rangle & \mbox{ if } n \mbox{ is odd}\\
\langle (g_1h_1)^{n/2} \rangle & \mbox{ if } n \mbox{ is even}
\end{cases}.
\end{align*}
The only points of $C \times C$ with non trivial stabilizer are
\begin{equation*}
\begin{array}{c|c|c|c|c}
\mbox{Fixed points} & \mbox{\#Points} & \mbox{Stabilizer} & \mbox{Type of singularity on } X & \\ \hline
\Fix(g_1)^2 & n^2 & \langle g_1^a\times g_1^a\rangle\simeq \Z_b & \frac{1}{b}(1,1) & \mbox{Any } n\\
\Fix(h_1)^2 & n^2 & \langle h_1^a\times h_1^{-a}\rangle\simeq \Z_b & \frac{1}{b}(1,b-1) & \mbox{Any } n\\
\Fix(k_1)^2 & n^2 & \langle k_1^{n/2}\times k_1^{n/2}\rangle\simeq \Z_2 & \frac{1}{2}(1,1) & n \mbox{ even}\\
\end{array}
\end{equation*}
In particular, the only non canonical singularities of $X_{a,b}$ are $b$ points of type $\frac{1}{b}(1,1)$. \\

Since $C/G\simeq \PP^1$ then  $q(\widehat{X}_{a,b})=0$. Moreover, we have, by the formulas in \cite{BaPi12},
\begin{equation*}
 K_{X_{a,b}}^2=\frac{8(g(C)-1)^2}{\#G}=2(n-3)^2
\end{equation*}
and, as we have exactly $b$ singular points of type $\frac1b (1,1)$ ,
\[
r^*K_{X_{a,b}}=K_{\widehat{X}_{a,b}}+\frac{b-2}{b}(E_1+\cdots+E_b)
\]
where $E_i$ are the exceptional divisors introduced by the resolution over the non-canonical points. These are disjoint rational curves with selfintersection $-b$ so
\begin{equation}
K_{\widehat{X}_{a,b}}^2=K_{X_{a,b}}^2-b^2\frac{(b-2)^2}{b^2}=2(n-3)^2-(b-2)^2.
\end{equation}

\begin{remark}
Notice that $K_{\widehat{X}_{a,b}}^2\geq 2$ for all $(a,b)$ satisfying our assumptions, unless $(a,b)\in\{(1,4),(1,5)\}$.
\end{remark}

There is an isomorphism between $H^0(K_C)$ and $$H^0(\OO_{C}(n-3))=H^0(\OO_{\PP^2}(n-3))=\CC[x_0,x_1,x_2]_{n-3}.$$ 
Then $\rho_1$ induces a $G$-action on $H^0(\omega_C)$ via pull-back of holomorphic forms on $C$. We wrote $\rho_1$ so that this action coincides with the natural action induced by (\ref{ro1}) on monomials of degree $n-3$. Explicitly, if $m_0+m_1+m_2=n-3$ we have
\begin{multline*}
g_1\cdot x_0^{m_0}x_1^{m_1}x_2^{m_2}=(g_1^{-1})^*(x_0^{m_0}x_1^{m_1}x_2^{m_2})=\\
=\lambda^{-m_0-m_1-m_2}\omega^{-m_1} x_0^{m_0}x_1^{m_1}x_2^{m_2}=\omega^{-m_1-1} x_0^{m_0}x_1^{m_1}x_2^{m_2}
\end{multline*}
and
\begin{multline*}
h_1\cdot x_0^{m_0}x_1^{m_1}x_2^{m_2}=(h_1^{-1})^*(x_0^{m_0}x_1^{m_1}x_2^{m_2})=\\
=\lambda^{-m_0-m_1-m_2}\omega^{-m_2} x_0^{m_0}x_1^{m_1}x_2^{m_2}=\omega^{-m_2-1} x_0^{m_0}x_1^{m_1}x_2^{m_2}.
\end{multline*}

\noindent The canonical action induced by $\rho_2$ and the bicanonical action follow accordingly. Working as in the previous sections we computed $p_g(\widehat{X}_{a,b})=h^0(K_{C \times C})^G$ and $h^0(2K_{C \times C})^G$ for the case $a=b$. The values are all in Table \ref{TABLESURF}. We stress that for $a=b\geq 3$ we always get $K^2_{\widehat{X}_{a,b}} >0$ and $p_g(\widehat{X}_{a,b})>0$ so $\widehat{X}_{a,b}$ is of general type. 

\noindent As the only non-canonical singular points are of type $\frac{1}{b}(1,1)$ we have $$P_2(\widehat{X}_{b,b})=H^0(2K_{\widehat{X}_{b,b}})\simeq H^0(K_{C \times C}\otimes \mathcal{I}_{R_{nc}}^{2b-4})^G$$
where $\mathcal{I}_{R_{nc}}$ is the ideal sheaf of functions vanishing at order at least $2b-4$ in all the points of 
$$R_{nc}=\Fix(g_1)^2=\{(1:0:-\eta_1)\times (1:0:-\eta_2) \quad |\quad \eta_1^n=\eta_2^n=1\}.$$

\noindent Using the embedding of $C \times C$ in $\mathbb{P}^2 \times \mathbb{P}^2$ we have $H^0(2K_{C \times C})=H^0(\OO_{C \times C}(2n-6,2n-6))$. To simplify the computation, we just look for the invariant monomials with the right vanishing order on $R_{nc}$: in principle their number is only a lower bound for $P_2(\widehat{X}_{b,b})$; the vanishing order of 
$x_0^{m_0}x_1^{m_1}x_2^{m_2}y_0^{n_0}y_1^{n_1}y_2^{n_2}$
with $0\leq m_1,n_1,m_1+m_2,n_1+n_2\leq 2n-6$ and $0\leq m_2,n_2\leq n-1$ equals  $m_1+n_1$.\\

We prove

\begin{prop}
Assume $a=b\geq 3$. Then $H^0(2K_Y\otimes \mathcal{I}_{R_{nc}})^G$ is generated by invariant monomials.
Moreover, the codimension of $H^0(2K_Y\otimes \mathcal{I}_{R_{nc}})^G$ in $H^0(2K_Y)^G$ is $b(b-3)$.
\end{prop}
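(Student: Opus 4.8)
The plan is to combine Theorem~\ref{mainthm} with the description of $\mathcal{I}_{R_{nc}}$ obtained just above the statement. By that discussion $\mathcal{I}_{R_{nc}}$ is the ideal sheaf of the zero-dimensional subscheme $Z\subset Y:=C\times C$ supported on the $n^2$ points of $R_{nc}$, equal to $\mathfrak{m}_P^{\,2b-4}$ at each such $P$ (and to $\mathcal{O}_Y$ elsewhere, all remaining singularities of the quotient model $X_{a,b}$ being canonical). Hence, writing $J_P:=(\mathcal{O}_{Y,P}/\mathfrak{m}_P^{\,2b-4})\otimes(2K_Y)_P$, we have
\[
H^0(2K_Y\otimes\mathcal{I}_{R_{nc}})=\ker\Bigl(\mathrm{res}\colon H^0(2K_Y)\longrightarrow H^0(\mathcal{O}_Z\otimes 2K_Y)=\bigoplus_{P\in R_{nc}}J_P\Bigr).
\]
By the K\"unneth formula $H^0(2K_Y)=H^0(\mathcal{O}_C(2n-6))^{\otimes 2}$ has the monomial basis formed by the $x^my^l$ with $\sum m_i=\sum l_i=2n-6$ and $0\le m_2,l_2\le n-1$, on which $\rho_1$ and $\rho_2$ act diagonally; so $H^0(2K_Y)^G$ is spanned by the $G$-invariant basis monomials.

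Next I would compute $\mathrm{res}$ on this basis. Fix $P=(1:0:-\eta_1)\times(1:0:-\eta_2)\in R_{nc}$ and local coordinates $u=x_1/x_0$, $v=y_1/y_0$ near its two components; from $x_0^n+x_1^n+x_2^n=0$ one gets $x_2/x_0=\zeta_1\cdot(1+\text{series in }u^n)$ with $\zeta_1:=-\eta_1\ne 0$, and similarly $y_2/y_0=\zeta_2\cdot(1+\text{series in }v^n)$. In the affine chart $x_0,y_0\ne 0$ the section $x^my^l$ becomes the function $u^{m_1}(x_2/x_0)^{m_2}v^{l_1}(y_2/y_0)^{l_2}=\zeta_1^{m_2}\zeta_2^{l_2}u^{m_1}v^{l_1}+(\text{terms of degree}\ge m_1+l_1+n)$; since $2b-4\le n$, its class in $J_P$ is $\zeta_1^{m_2}\zeta_2^{l_2}$ times the jet of $u^{m_1}v^{l_1}$, which vanishes as soon as $m_1+l_1\ge 2b-4$. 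Two consequences follow, using that the values of $x_2/x_0$ (resp.\ $y_2/y_0$) at the points of $R_{nc}$ run over two fixed $n$-element subsets of $\mathbb{C}$, so that the pertinent Vandermonde matrices are invertible. First: for each $(i,j)$ with $i+j<2b-4$, letting $m_2,l_2$ range over $\{0,\dots,n-1\}$ (which is legitimate because $i\le 2b-5\le n-5$), the classes of the monomials with $m_1=i$, $l_1=j$ span $\bigoplus_{P\in R_{nc}}\mathbb{C}\,u^iv^j$; hence $\mathrm{res}$ is surjective. Second: for $f=\sum a_{m,l}x^my^l$ the $(2b-4)$-jet of $f$ at $P$ equals $\sum_{i+j<2b-4}\bigl(\sum_{m_2,l_2}a_{(i,m_2),(j,l_2)}\zeta_1^{m_2}\zeta_2^{l_2}\bigr)u^iv^j$, and its vanishing at all $P\in R_{nc}$ forces $a_{m,l}=0$ whenever $m_1+l_1<2b-4$. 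So $\ker(\mathrm{res})=H^0(2K_Y\otimes\mathcal{I}_{R_{nc}})$ is spanned by the monomials with $m_1+l_1\ge 2b-4$, and its intersection with the monomial-spanned space $H^0(2K_Y)^G$ is spanned by the $G$-invariant such monomials --- which is the first assertion.

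For the codimension, the surjectivity of $\mathrm{res}$ makes $0\to H^0(2K_Y\otimes\mathcal{I}_{R_{nc}})\to H^0(2K_Y)\to\bigoplus_{P}J_P\to 0$ a short exact sequence of $G$-modules, which stays exact upon taking $G$-invariants (we work in characteristic $0$). Hence the codimension equals $\dim\bigl(\bigoplus_{P}J_P\bigr)^G=\sum_{O}\dim J_{P_O}^{\,\Stab(P_O)}$, the sum running over the $b$ orbits $O$ of $G$ on $R_{nc}$, each with stabiliser cyclic of order $b$. Fix $P\in R_{nc}$ and a generator $\gamma$ of $\Stab(P)=\langle g_1^a\times g_1^a\rangle$: since the singularity is of type $\frac{1}{b}(1,1)$, $\gamma$ acts on $T_PY$ by $\diag(\xi,\xi)$ with $\xi=e^{2\pi i/b}$ a primitive $b$-th root of unity, hence on $u^rv^s\in\mathcal{O}_{Y,P}/\mathfrak{m}_P^{\,2b-4}$ by $\xi^{-(r+s)}$ and on $(2K_Y)_P=(\wedge^2 T^*_PY)^{\otimes 2}$ by $\xi^{-4}$. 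Therefore $J_P^{\,\Stab(P)}$ has basis $\{u^rv^s:r,s\ge 0,\ r+s\le 2b-5,\ b\mid r+s+4\}$; in the range $0\le r+s\le 2b-5$ the only admissible value is $r+s=b-4$ (empty when $b=3$), so $\dim J_P^{\,\Stab(P)}=b-3$. Summing over the $b$ orbits gives codimension $b(b-3)$.

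The crux is the local computation in the second paragraph: one must show that restricting a basis monomial to a neighbourhood of a point of $R_{nc}$ produces, modulo $\mathfrak{m}_P^{\,2b-4}$, exactly the single jet-monomial $u^{m_1}v^{l_1}$, up to a nonzero scalar depending only on $m_2,l_2$ and on the point. This rests on the explicit expansion of $x_2/x_0$ as a power series in $u^n$ --- so that jets of order $\le 2b-5$ see only leading terms, using $2b-4\le n$ --- and on the fact that, for small $m_1+l_1$, the exponents $m_2,l_2$ may be chosen to realise every residue in $\{0,\dots,n-1\}$, which is what feeds the Vandermonde argument. Once this decoupling is in place, the remaining steps are linear algebra together with the elementary local $\mathbb{Z}_b$-representation count.
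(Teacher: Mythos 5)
Your proof is correct, and it reaches the statement by a genuinely different route than the one in the paper. The paper proceeds by explicitly solving the congruence system characterizing the $G$-invariant bicanonical monomials, showing that the invariant monomials of valuation $\nu=m_1+n_1<2b-4$ all have $\nu=b-4$ and span a space of dimension $b(b-3)$, and then proving by an evaluation-plus-implicit-differentiation argument (reduced to a $b\times b$ Vandermonde determinant at $b$ well-chosen points) that no nonzero combination of these low-valuation invariant monomials lies in $H^0(2K_Y\otimes\mathcal{I}_{R_{nc}})$. You instead prove the stronger, non-equivariant statement that the full jet-evaluation map $H^0(2K_Y)\to\bigoplus_{P\in R_{nc}}\bigl(\mathcal{O}_{Y,P}/\mathfrak{m}_P^{2b-4}\bigr)\otimes(2K_Y)_P$ is surjective with monomially spanned kernel, via the $n\times n$ product Vandermonde coming from the values of $x_2/x_0$ and $y_2/y_0$ on $R_{nc}$, and then extract the invariant codimension by Frobenius reciprocity together with a character count on the $(2b-4)$-jets at a point of type $\tfrac1b(1,1)$: the condition $b\mid r+s+4$ with $0\le r+s\le 2b-5$ forces $r+s=b-4$, giving $b-3$ invariant jets per each of the $b$ orbits. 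The common technical core is the same (the expansion $x_2/x_0=\zeta_1(1+O(u^n))$ with $2b-4\le n$, which decouples the jet of a monomial into $\zeta_1^{m_2}\zeta_2^{l_2}u^{m_1}v^{l_1}$, plus a Vandermonde argument), but the bookkeeping is organized differently. The paper's version is more elementary and produces the explicit list of offending invariant monomials, which is what is actually fed into the subsequent computation of $P_2(\widehat{X}_{b,b})$; yours is shorter and more structural, makes the factorization $b\cdot(b-3)$ transparent as (number of non-canonical singular points)$\times$(invariant jets per point), and sidesteps both the case analysis of the congruence system and the delicate implicit-differentiation step. The only place requiring care in your argument is the sign convention in the local character computation (invariance of $u^rv^s(du\wedge dv)^{\otimes 2}$ must come out as $r+s\equiv -4 \pmod b$, not $+4$); your count is consistent with the paper's finding $\nu=b-4$ and with the values $h^0(2K_{C\times C})^G-P_2$ in the table, so this is settled correctly.
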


\begin{proof}
We give only a sketch of the proof.

The invariant bicanonical monomials are those of the form $x_0^{m_0}x_1^{m_1}x_2^{m_2}y_0^{n_0}y_1^{n_1}y_2^{n_2}$ with
\begin{equation}\label{TheSystem}
\begin{cases}
(I) :\hphantom{(II)} m_1+4+2b+n_1+bn_2\equiv_n 0\\
(II) :\hphantom{(I)} m_2-2b-bn_1-n_2\equiv_n 0\\
0\leq m_1,n_1,m_1+m_2,n_1+n_2\leq 2n-6\\
0\leq m_2,n_2\leq n-1\\
m_0+m_1+m_2=n_0+n_1+n_2=2b-6
\end{cases}
\end{equation}

We prove that if $\nu:=m_1+n_1 \leq 2b-4$ then $b\geq 4$ and $\nu=b-4$. Hence the Proposition is true for $b=3$ and we can assume $b \geq 4$. In this case we solve (\ref{TheSystem}) under the assumption $\nu=b-4$, finding  $b-3$ possibilities for the pair $(m_1,n_1)$. We denote by $W_b$ the vector space generated by the invariant bicanonical monomials and with $W_b^{(m_1,n_1)}$ its subspace generated by monomials which have assigned exponents for the variables $x_1$ and $y_1$. Monomials in $W_b^{(m_1,n_1)}$ satisfy $m_2,n_2\in \{-3+kb\,|\, 1\leq k\leq b\}$ and if $m_2=-3+k_mb$ and $n_2=-3+k_nb$ then $k_m\equiv_b k_n+n_1+2$. Using these observations, we  prove that the dimension of $W_b^{(m_1,n_1)}$ is $b$ which implies then that $W_b$ has codimension $b(b-3)$ in $H^0(2K_Y)^G$. \\

\noindent It remains to prove that a polynomial whose monomials have $\nu=b-4$ cannot vanish in all the points of $R_{nc}$ with order at least $2b-4$. A polynomial $p$ in $W_b$ is a linear combination of polynomials living in $W_b^{(m_1,n_1)}$. In affine coordinates $z_i=x_i/x_0$, $w_i=y_i/y_0$ we have 
$$p=\sum_{j=0}^{b-4} z_1^{j}w_1^{b-4-j}p_j(z_2,w_2)$$
with $z_1^{j}w_1^{b-4-j}p_j(z_2,w_2)\in W_b^{(m_1,n_1)}$.
In the second part of the proof we prove that if a polynomial $p\in W_b$ vanishes at order at least $2b-4$ in the points of $R_{nc}$ then, necessarily $p_j(-\eta,-\mu)=0$ for all pairs of $n$-th roots of $1$. This is obtained by implicit differentiation and by keeping track of the order of vanishing of the various terms of the sum. \\

\noindent In the third and final part of the proof we prove that if 
$z_1^{j}w_1^{b-4-j}q(z_2,w_2)\in W_b^{(m_1,n_1)}$ and $q(-\eta,-\mu)=0$ for enough pairs of $n$-th roots of $1$ then $q$ is actually $0$. More precisely, we prove that if $q(-1,\mu_i)=0$ for $\mu_1,\dots,\mu_b$ with $\mu^n=1$ and $\mu_i^b\neq \mu_j^b$ for $i\neq j$, then $q=0$. This can be seen as follows. If $[x]_b$ is the only representative of $x$ modulo $b$ in the range $[1,b]$ one can choose the monomials $f_k=z_1^{m_1}w_1^{b-4-m_1}q_k$ with
$$q_k=(-1)^{b(k+[k+2+n_1]_b)} z_2^{-3+b[k+2+n_1]_b} w_2^{-3+kb}\qquad 1\leq k\leq b$$ as basis for $W_{b}^{(m_1,n_1)}$. The coefficient is simply to get easier computations.
If $q=\sum_{k=1}^b \lambda_k q_k$ then
$$q(-1-\mu)=\mu^{-3}\sum_k \lambda_k(\mu^{b})^k.$$
We know that $q(-1,-\mu_i)=0$ for $\mu_1,\dots,\mu_b$. Then either $\lambda_k=0$ for all $k$ or the matrix $A=((\mu_i^b)^k)_{1\leq i,k\leq b}$ has determinant $0$. But $A$ is a Vandermonde-type matrix associated to $\{\mu_1^b,\dots,\mu_b^b\}$ and its determinant is zero if and only if there is a pair $(i,j)$ with $i\neq j$ such that $\mu_i^b=\mu_j^b$. But this contradicts the hypothesis so we have, finally, $p=0$.
\end{proof}

\noindent Having a way to compute $P_2$ also means that we have a way to determine whether our surfaces are minimal or not. Indeed, by Proposition \ref{vol}, we have $$ \vol (K_{\widehat{X}_{a,b}})=P_2(\widehat{X}_{a,b})-\chi(\OO_S)\geq K_{\widehat{X}_{a,b}}^2$$ with equality if and only if $S$ is already minimal. Here we summarize the invariants for the product-quotient surfaces 
obtained for $3 \leq a=b \leq 12$. 
\begin{equation*}
\label{TABLESURF}
\begin{array}{c||c|c|c|c|c|c|c|c|c}
\scriptstyle b &\scriptstyle g(C) &\scriptstyle K_{X_{b,b}}^2 &\scriptstyle K_{\widehat{X}_{b,b}}^2 &\scriptstyle p_g(\widehat{X}_{b,b}) & \scriptstyle\chi(\OO_{\widehat{X}_{b,b}}) &\scriptstyle h^0(2K_{C \times C})^G &\scriptstyle P_2(\widehat{X}_{b,b}) &\scriptstyle \vol K_{\widehat{X}_{b,b}} & \scriptstyle \vol K_{\widehat{X}_{b,b}} -K_{\widehat{X}_{b,b}}^2\\ \hline \hline
3 & 28 & 72 & 71 & 9 & 10 & 81 & 81 & 71 & 0\\
4 & 105 & 338 & 334 & 43 & 44 & 382 & 378 & 334 & 0\\
5 & 276 & 968 & 959 & 122 & 123 & 1092 & 1082 & 959 & 0\\
6 & 595 & 2178 & 2162 & 274 & 275 & 2455 & 2437 & 2162 & 0\\
7 & 1128 & 4232 & 4207 & 531 & 532 & 4767 & 4739 & 4207 & 0\\
8 & 1953 & 7442 & 7406 & 933 & 934 & 8380 & 8340 & 7406 & 0\\
9 & 3160 & 12168 & 12119 & 1524 & 1525 & 13698 & 13644 & 12119 & 0\\
10 & 4851 & 18818 & 18754 & 2356 & 2357 & 21181 & 21111 & 18754 & 0\\
11 & 7140 & 27848 & 27767 & 3485 & 3486 & 31341 & 31253 & 27767 & 0\\
12 & 10153 & 39762 & 39662 & 4975 & 4976 & 44746 & 44638 & 39662 & 0
\end{array}
\end{equation*}
Hence we can conclude 
\begin{proposition}
For all $3 \leq b \leq 12$, $\widehat{X}_{b,b}$ is a regular minimal surface of general type.
\end{proposition}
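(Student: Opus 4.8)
The plan is to establish the three adjectives in the statement separately. Regularity is immediate: since $C/G\cong\PP^1$ one has $q(\widehat{X}_{b,b})=0$, as already noted. For the remaining two properties the key observation is that $\widehat{X}_{b,b}$ is a smooth projective surface (a minimal resolution of the cyclic quotient singularities of $X_{b,b}$) with
\[
K^2_{\widehat{X}_{b,b}}=2(n-3)^2-(b-2)^2\qquad(n=b^2),
\]
which is strictly positive for $b\geq 3$ by the Remark above. Hence, as soon as we check $P_2(\widehat{X}_{b,b})\neq 0$, Proposition \ref{vol} applies: it shows that $\widehat{X}_{b,b}$ is of general type and that
\[
\vol(K_{\widehat{X}_{b,b}})=P_2(\widehat{X}_{b,b})+q(\widehat{X}_{b,b})-p_g(\widehat{X}_{b,b})-1=P_2(\widehat{X}_{b,b})-\chi(\OO_{\widehat{X}_{b,b}}).
\]
Since the morphism from a surface of general type to its (unique) minimal model is a composition of $\vol(K)-K^2$ elementary contractions, $\widehat{X}_{b,b}$ is minimal if and only if $\vol(K_{\widehat{X}_{b,b}})=K^2_{\widehat{X}_{b,b}}$, i.e. if and only if $P_2(\widehat{X}_{b,b})-\chi(\OO_{\widehat{X}_{b,b}})=K^2_{\widehat{X}_{b,b}}$. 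Everything therefore reduces to computing $P_2(\widehat{X}_{b,b})$ and comparing.

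First I would pin down $P_2(\widehat{X}_{b,b})$. The only non-canonical singularities of the quotient model $X_{b,b}$ are the $b$ points of type $\frac1b(1,1)$ lying over $R_{nc}=\Fix(g_1)^2$, so Theorem \ref{mainthm}, in the form already used for these surfaces, gives $P_2(\widehat{X}_{b,b})=h^0\big(2K_{C\times C}\otimes\mathcal I_{R_{nc}}^{2b-4}\big)^G$. By the Proposition above, this $G$-invariant subspace is spanned by invariant bicanonical monomials and has codimension exactly $b(b-3)$ inside $H^0(2K_{C\times C})^G$, whence
\[
P_2(\widehat{X}_{b,b})=h^0(2K_{C\times C})^G-b(b-3).
\]
The number $h^0(2K_{C\times C})^G$ of invariant bicanonical forms is computed as in the earlier sections, by enumerating the monomials of bidegree $(2n-6,2n-6)$ in $H^0(\OO_{C\times C}(2n-6,2n-6))$ on which the generator $g$ (hence all of $G$) acts trivially, using the explicit characters written down above; equivalently it follows from Chevalley--Weil. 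For $3\leq b\leq 12$ these counts, the resulting $P_2(\widehat{X}_{b,b})$, $p_g(\widehat{X}_{b,b})$, and $\chi(\OO_{\widehat{X}_{b,b}})=1+p_g(\widehat{X}_{b,b})$ are the content of Table \ref{TABLESURF}.

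Finally I would conclude by inspecting Table \ref{TABLESURF}: in every row with $3\leq b\leq 12$ one has $P_2(\widehat{X}_{b,b})>0$ and $\vol(K_{\widehat{X}_{b,b}})=P_2(\widehat{X}_{b,b})-\chi(\OO_{\widehat{X}_{b,b}})=K^2_{\widehat{X}_{b,b}}$ (the last column is identically $0$). The positivity of $P_2$ licenses Proposition \ref{vol}, so $\widehat{X}_{b,b}$ is of general type; the equality forces the number of contracted curves to be $0$, so $\widehat{X}_{b,b}$ is minimal; and $q(\widehat{X}_{b,b})=0$ gives regularity. This proves the proposition. The only genuinely non-routine ingredient is the computation of $P_2(\widehat{X}_{b,b})$, and inside it the delicate point — already carried out in the Proposition above — is showing that vanishing to order $2b-4$ along $R_{nc}$ cuts out exactly $b(b-3)$ independent conditions on the invariant bicanonical monomials (the Vandermonde/non-vanishing argument); the rest is standard surface theory plus a finite arithmetic verification.
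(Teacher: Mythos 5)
Your proposal is correct and follows essentially the same route as the paper: regularity from $C/G\cong\PP^1$, general type from $K^2>0$ together with a nonvanishing plurigenus via Proposition \ref{vol}, and minimality by computing $P_2(\widehat{X}_{b,b})=h^0(2K_{C\times C})^G-b(b-3)$ through the codimension proposition and checking $\vol(K_{\widehat{X}_{b,b}})=P_2-\chi(\OO_{\widehat{X}_{b,b}})=K^2_{\widehat{X}_{b,b}}$ in each row of the table. The only (immaterial) difference is that the paper deduces general type from $p_g>0$ rather than $P_2>0$, which amounts to the same appeal to the Enriques--Kodaira classification.
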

We notice that this result would be difficult to achieve with the techniques of \cite{BaPi12,BaPi16} since both the minimality criteria there {\it e.g.} \cite{BaPi12}*{Proposition 4.7} and \cite{BaPi16}*{Lemma 6.9} require that at most two of the exceptional divisors of the resolution of the singularities of the quotient model have self-intersection different to $-2$ and $-3$, whereas in the last example we have $12$ curves of self-intersection $-12$.

This disproves the conjecture \cite{BaPi16}*{Conjecture 1.5}, proved in \cite{BaPi16} for surfaces with $p_g=0$. Indeed, all these surfaces have invariant $\gamma$ (\cite{BaPi16}*{Definition 2.3}) equal to zero, so $p_g+\gamma =p_g \neq 0$, whereas  \cite{BaPi16}*{Conjecture 1.5} suggests that all minimal product-quotient surfaces should have $p_g+\gamma=0$.

%%%%%%%%%%%%%%%%%%%%%%%%%%%%%%%%%%%%%%%%%%%%%%%%%%%%%%%%%%%%%%%%%%%%%%%

%=========================================================================

\begin{bibdiv}
\begin{biblist}
\bib{Bat}{article}{
    AUTHOR = {Batyrev, Victor V.},
     TITLE = {Dual polyhedra and mirror symmetry for {C}alabi-{Y}au
              hypersurfaces in toric varieties},
   JOURNAL = {J. Algebraic Geom.},
    VOLUME = {3},
      YEAR = {1994},
    NUMBER = {3},
     PAGES = {493--535},
      ISSN = {1056-3911}
}

\bib{fano}{article}{
   author={Bauer, Ingrid},
   author={Catanese, Fabrizio},
   title={Some new surfaces with $p_g=q=0$},
   conference={
      title={The Fano Conference},
   },
   book={
      publisher={Univ. Torino, Turin},
   },
   date={2004},
   pages={123--142},
   review={\MR{2112572}},
}

\bib{isogenous}{article}{
   author={Bauer, Ingrid},
   author={Catanese, Fabrizio},
   author={Grunewald, Fritz},
   title={The classification of surfaces with $p_g=q=0$ isogenous to a
   product of curves},
   journal={Pure Appl. Math. Q.},
   volume={4},
   date={2008},
   number={2, Special Issue: In honor of Fedor Bogomolov.},
   pages={547--586},
   issn={1558-8599},
   review={\MR{2400886}},
   doi={10.4310/PAMQ.2008.v4.n2.a10},
}

\bib{BaCaGrPi12}{article}{
   author={Bauer, Ingrid},
   author={Catanese, Fabrizio},
   author={Grunewald, Fritz},
   author={Pignatelli, Roberto},
   title={Quotients of products of curves, new surfaces with $p_g=0$ and
   their fundamental groups},
   journal={Amer. J. Math.},
   volume={134},
   date={2012},
   number={4},
   pages={993--1049},
   issn={0002-9327},
   review={\MR{2956256}},
   doi={10.1353/ajm.2012.0029},
}

\bib{surveypg0}{article}{
   author={Bauer, Ingrid},
   author={Catanese, Fabrizio},
   author={Pignatelli, Roberto},
   title={Surfaces of general type with geometric genus zero: a survey},
   conference={
      title={Complex and differential geometry},
   },
   book={
      series={Springer Proc. Math.},
      volume={8},
      publisher={Springer, Heidelberg},
   },
   date={2011},
   pages={1--48},
   review={\MR{2964466}},
%   doi={10.1007/978-3-642-20300-8_1},
}

\bib{Beauville}{article}{
   author={Beauville, Arnaud},
   title={L'application canonique pour les surfaces de type g\'{e}n\'{e}ral},
   language={French},
   journal={Invent. Math.},
   volume={55},
   date={1979},
   number={2},
   pages={121--140},
   issn={0020-9910},
   review={\MR{553705}},
   doi={10.1007/BF01390086},
}
		
\bib{BeBo}{article}{
   author={Beauville, Arnaud},
   title={Vari\'{e}t\'{e}s K\"{a}hleriennes dont la premi\`ere classe de Chern est nulle},
   language={French},
   journal={J. Differential Geom.},
   volume={18},
   date={1983},
   number={4},
   pages={755--782 (1984)},
   issn={0022-040X},
   review={\MR{730926}},
}

\bib{GroupsActing}{article}{
    AUTHOR = {Bini, Gilberto},
    AUTHOR = {Favale, Filippo F.},
     TITLE = {Groups acting freely on {C}alabi-{Y}au threefolds embedded in
              a product of del {P}ezzo surfaces},
   JOURNAL = {Adv. Theor. Math. Phys.},
    VOLUME = {16},
      YEAR = {2012},
    NUMBER = {3},
     PAGES = {887--933},
      ISSN = {1095-0761}
}

\bib{CloserLook}{article}{
    AUTHOR = {Bini, Gilberto},
    AUTHOR = {Favale, Filippo F.},
     TITLE = {A closer look at mirrors and quotients of {C}alabi-{Y}au
              threefolds},
   JOURNAL = {Ann. Sc. Norm. Super. Pisa Cl. Sci. (5)},
    VOLUME = {15},
      YEAR = {2016},
     PAGES = {709--729},
      ISSN = {0391-173X}
}

\bib{NewExample}{article}{
    AUTHOR = {Bini, Gilberto},
    AUTHOR = {Favale, Filippo F.},
    AUTHOR = {Neves, Jorge},
    AUTHOR = {Pignatelli, Roberto},
     TITLE = {New examples of {C}alabi-{Y}au 3-folds and genus zero
              surfaces},
   JOURNAL = {Commun. Contemp. Math.},
    VOLUME = {16},
      YEAR = {2014},
    NUMBER = {2},
     PAGES = {1350010, 20},
      ISSN = {0219-1997}
}
	
\bib{BPV}{book}{
   author={Barth, Wolf P.},
   author={Hulek, Klaus},
   author={Peters, Chris A. M.},
   author={Van de Ven, Antonius},
   title={Compact complex surfaces},
   series={Ergebnisse der Mathematik und ihrer Grenzgebiete. 3. Folge. A
   Series of Modern Surveys in Mathematics [Results in Mathematics and
   Related Areas. 3rd Series. A Series of Modern Surveys in Mathematics]},
   volume={4},
   edition={2},
   publisher={Springer-Verlag, Berlin},
   date={2004},
   pages={xii+436},
   isbn={3-540-00832-2},
   review={\MR{2030225}},
   doi={10.1007/978-3-642-57739-0},
}

\bib{BaPi12}{article}{
   author={Bauer, Ingrid},
   author={Pignatelli, Roberto},
   title={The classification of minimal product-quotient surfaces with
   $p_g=0$},
   journal={Math. Comp.},
   volume={81},
   date={2012},
   number={280},
   pages={2389--2418},
   issn={0025-5718},
   review={\MR{2945163}},
   doi={10.1090/S0025-5718-2012-02604-4},
}

\bib{BaPi16}{article}{
   author={Bauer, Ingrid},
   author={Pignatelli, Roberto},
   title={Product-quotient surfaces: new invariants and algorithms},
   journal={Groups Geom. Dyn.},
   volume={10},
   date={2016},
   number={1},
   pages={319--363},
   issn={1661-7207},
   review={\MR{3460339}},
   doi={10.4171/GGD/351},
}

\bib{notinfinitesimally}{article}{
   author={Bauer, Ingrid},
   author={Pignatelli, Roberto},
    title={Rigid but not infinitesimally rigid compact complex manifolds},
   eprint={arXiv:1805.02559 [math.AG]},
   date={2018},
    pages={18}
}

%\bib{Cat92}{article}{
%  author={Catanese, Fabrizio},
%   title={Chow varieties, Hilbert schemes and moduli spaces of surfaces of
%   general type},
%   journal={J. Algebraic Geom.},
%   volume={1},
%   date={1992},
%   number={4},
%   pages={561--595},
%  issn={1056-3911},
%   review={\MR{1174902}},
%}
		
\bib{Cat18}{article}{
   author={Catanese, Fabrizio},
   title={On the canonical map of some surfaces isogenous to a product},
   conference={
      title={Local and global methods in algebraic geometry},
   },
   book={
      series={Contemp. Math.},
      volume={712},
      publisher={Amer. Math. Soc., Providence, RI},
   },
   date={2018},
   pages={33--57},
   review={\MR{3832398}},
   doi={10.1090/conm/712/14341},
}

\bib{CLS}{book}{
   author={Cox, David A.},
   author={Little, John B.},
   author={Schenck, Henry K.},
   title={Toric varieties},
   series={Graduate Studies in Mathematics},
   volume={124},
   publisher={American Mathematical Society, Providence, RI},
   date={2011},
   pages={xxiv+841},
   isbn={978-0-8218-4819-7},
   review={\MR{2810322}},
   doi={10.1090/gsm/124},
}

\bib{Elkik}{article}{
    author = {Elkik, Ren\'{e}e},
     title = {Rationalit\'{e} des singularit\'{e}s canoniques},
   journal = {Invent. Math.},
    volume = {64},
      year = {1981},
    number = {1},
     pages = {1--6},
      issn = {0020-9910},
  doi = {10.1007/BF01393930},
}

\bib{FG16}{article}{
    AUTHOR = {Frapporti, Davide},
    AUTHOR = {Glei\ss ner, Christian},
     TITLE = {On threefolds isogenous to a product of curves},
   JOURNAL = {J. Algebra},
    VOLUME = {465},
      YEAR = {2016},
     PAGES = {170--189},
      ISSN = {0021-8693},
       DOI = {10.1016/j.jalgebra.2016.06.034},
       URL = {https://doi-org.ezp.biblio.unitn.it/10.1016/j.jalgebra.2016.06.034},
}

\bib{MQEwAS}{article}{
   author={Frapporti, Davide},
   author={Pignatelli, Roberto},
   title={Mixed quasi-\'etale quotients with arbitrary singularities},
   journal={Glasg. Math. J.},
   volume={57},
   date={2015},
   number={1},
   pages={143--165},
   issn={0017-0895},
   review={\MR{3292683}},
   doi={10.1017/S0017089514000184},
}

\bib{AliceMatteo}{article}{
   author={Garbagnati, Alice},
   author={Penegini, Matteo},
   title={K3 surfaces with a non-symplectic automorphism and
   product-quotient surfaces with cyclic groups},
   journal={Rev. Mat. Iberoam.},
   volume={31},
   date={2015},
   number={4},
   pages={1277--1310},
   issn={0213-2230},
   review={\MR{3438390}},
   doi={10.4171/RMI/869},
}

\bib{ChristianRobertoCarlos}{article}{
   author={Gleissner, Christian},
   author={Pignatelli, Roberto},
   author={Rito, Carlos},
    title={New surfaces with canonical map of high degree},
   eprint={arXiv:1807.11854 [math.AG]},
   date={2018},
    pages={10}
}

\bib{ChristianThesis}{thesis}{
   author={Gleissner, Christian},
    title={Threefolds Isogenous to a Product and Product quotient Threefolds with Canonical Singularities.},
   school = {Universit\"at Bayreuth, Ph.D. Thesis},
	 url = {https://epub.uni-bayreuth.de/2981/},
   date={2016},
    pages={117}
}

\bib{Christian}{article}{
   author={Gleissner, Christian},
    title={Mixed Threefolds Isogenous to a Product},
   eprint={arXiv:1703.02316 [math.AG]},
   date={2017},
    pages={27}
}

\bib{Hartshorne}{book}{
   author={Hartshorne, Robin},
   title={Algebraic geometry},
   note={Graduate Texts in Mathematics, No. 52},
   publisher={Springer-Verlag, New York-Heidelberg},
   date={1977},
   pages={xvi+496},
   isbn={0-387-90244-9},
   review={\MR{0463157}},
}

\bib{AndreasThomas}{article}{
   author={H\"oring, Andreas},   
   author={Peternell, Thomas},
    title={Algebraic integrability of foliations with numerically trivial canonical bundle},
   eprint={arXiv:1710.06183 [math.AG]},
   date={2017},
    pages={20}
}

\bib{Kawa}{article}{
   author={Kawamata, Yujiro},
   title={Abundance theorem for minimal threefolds},
   journal={Invent. Math.},
   volume={108},
   date={1992},
   number={2},
   pages={229--246},
   issn={0020-9910},
   review={\MR{1161091}},
   doi={10.1007/BF02100604},
}

\bib{Koll07}{book}{
   author={Koll\'ar, J\'anos},
   title={Lectures on resolution of singularities},
   series={Annals of Mathematics Studies},
   volume={166},
   publisher={Princeton University Press, Princeton, NJ},
   date={2007},
   pages={vi+208},
   isbn={978-0-691-12923-5},
   isbn={0-691-12923-1},
   review={\MR{2289519}},
}

\bib{Kreuzer}{article}{
    AUTHOR = {Kreuzer, Maximilian},
    AUTHOR = {Skarke, Harald},
     TITLE = {Complete classification of reflexive polyhedra in four
              dimensions},
   JOURNAL = {Adv. Theor. Math. Phys.},
    VOLUME = {4},
      YEAR = {2000},
    NUMBER = {6},
     PAGES = {1209--1230},
      ISSN = {1095-0761}
}

%\bib{MichaelMatteo}{article}{
%   author={L\"{o}nne, Michael},
%   author={Penegini, Matteo},
%   title={On asymptotic bounds for the number of irreducible components of
%   the moduli space of surfaces of general type II},
%   journal={Doc. Math.},
%   volume={21},
%   date={2016},
%   pages={197--204},
%   issn={1431-0635},
%   review={\MR{3465111}},
%}

\bib{mat}{book}{
   author={Matsuki, Kenji},
   title={Introduction to the Mori program},
   series={Universitext},
   publisher={Springer-Verlag, New York},
   date={2002},
   pages={xxiv+478},
   isbn={0-387-98465-8},
   review={\MR{1875410}},
   doi={10.1007/978-1-4757-5602-9},
}

\bib{mistrettapolizzi}{article}{
   author={Mistretta, Ernesto},
   author={Polizzi, Francesco},
   title={Standard isotrivial fibrations with $p_g=q=1$. II},
   journal={J. Pure Appl. Algebra},
   volume={214},
   date={2010},
   number={4},
   pages={344--369},
   issn={0022-4049},
   review={\MR{2558743}},
   doi={10.1016/j.jpaa.2009.05.010},
}

\bib{mysurvey}{article}{
   author={Pignatelli, Roberto},
   title={On quasi-\'{e}tale quotients of a product of two curves},
   conference={
      title={Beauville surfaces and groups},
   },
   book={
      series={Springer Proc. Math. Stat.},
      volume={123},
      publisher={Springer, Cham},
   },
   date={2015},
   pages={149--170},
   review={\MR{3356384}},
%   doi={10.1007/978-3-319-13862-6_10},
}

\bib{FurtherQuotients}{article}{
   author={Pignatelli, Roberto},
    title={Quotients of the square of a curve by a mixed action, further quotients and Albanese morphisms},
   eprint={arXiv:1708.01750 [math.AG]},
   date={2017},
    pages={17}
}

\bib{Wiman}{article}{
author={Wiman, Anders},
title={\"Uber die hyperelliptischen Kurven und diejenigen vom Geschlechte $p= 3$, welche eindeutige Transformationen in sich zulassen},
journal={Bihang Kongl. Svenska Vetenkamps-Akademiens Handlingar}, 
volume={21},
date={1895}, 
   pages={1--23}
 }

\end{biblist}
\end{bibdiv}

\end{document}